\newtheorem{definition}{Definition}[section]
\newtheorem{theorem}{Theorem}[section]
\newtheorem{lemma}{Lemma}[section]
\newtheorem{problem}{Problem}[section]
\newtheorem{remark}{Remark}[section]
\newtheorem{example}{Example}[section]
\title{\bf Randomized algorithms for computing the tensor train approximation and their applications}
\author{
Maolin Che\thanks{E-mail: mlche@gzu.edu.cn and chncml@outlook.com. School of Mathematics and Statistics, Guizhou University, Guiyang, 550025, Guizhou, P. R. of China and Center for Intelligent Multidimensional Data Analysis, Hong Science Park, Shatin, Hong Kong.}
\and
Yimin Wei\thanks{E-mail: ymwei@fudan.edu.cn and yimin.wei@gmail.com. School of Mathematical Sciences and
        Key Laboratory of Mathematics for Nonlinear   Sciences,  Fudan University, Shanghai, 200433, P. R. of China.}
\and
Hong Yan\thanks{Department of Electrical Engineering and Center for Intelligent Multidimensional Data Analysis, City University of Hong Kong, 83 Tat Chee Avenue, Kowloon, Hong Kong.}}
\date{\today}
\begin{document}
\date{}
\maketitle
\begin{abstract}
In this paper, we focus on the fixed TT-rank and precision problems of finding an approximation of the tensor train (TT) decomposition of a tensor. Note that the TT-SVD and TT-cross are two well-known algorithms for these two problems. Firstly, by combining the random projection technique with the power scheme, we obtain two types of randomized algorithms for the fixed TT-rank problem. Secondly, by using the non-asymptotic theory of sub-random Gaussian matrices, we derive the upper bounds of the proposed randomized algorithms. Thirdly, we deduce a new deterministic strategy to estimate the desired TT-rank with a given tolerance and another adaptive randomized algorithm that finds a low TT-rank representation satisfying a given tolerance, and is beneficial when the target TT-rank is not known in advance. We finally illustrate the accuracy of the proposed algorithms via some test tensors from synthetic and real databases. In particular, for the fixed TT-rank problem, the proposed algorithms can be several times faster than the TT-SVD, and the accuracy of the proposed algorithms and the TT-SVD are comparable for several test tensors.
  \bigskip

  {\bf Keywords:}
Tensor train decomposition, fixed TT-rank problem, fixed precision problem, TT-SVD, sub-Gaussian matrices, randomized algorithms, the Khatri-Rao product, the power scheme, facial image analysis
  \bigskip

  {\bf AMS subject classifications:} 15A18, 15A69, 65F55, 68W20
\end{abstract}
\newpage
\section{Introduction}

\label{sec:introduction}
Tensor decomposition is a useful tool for analyzing big data in machine learning, signal processing, neuroscience, uncertainty quantification, and so on. There exist several types of tensor decomposition, such as CANDECOMP/PARAFAC (CP) decomposition \cite{carroll1970analysis}, Tucker decomposition \cite{tucker1966some}, Hierarchical Tucker (HT) decomposition \cite{grasedyck2010hierarchical} and tensor train (TT) decomposition \cite{oseledets2011tensor}. We now consider the computation of an approximation of the TT decomposition, which can be formulated as the fixed TT-rank problem and the fixed precision problem.

An approximation of the TT decomposition provides a useful model reduction and its storage cost increases linearly with the order of tensors. The approximation of the TT decomposition can be applied to some practical problems, such as the system of linear equations (cf. \cite{oseledets2012solution,kazeer2014low}), symmetric eigenvalue and singular value problems (cf. \cite{dolgov2014computation,kressner2014low-rank,lee2015estimating}), tensor completion (cf. \cite{bengua2017efficient,kressner2014low}), supervised tensor learning (cf. \cite{chen2019support,chen2022kernelized,kour2023efficient}), and some other fields (cf. \cite{dian2019learning,wang2019distributed,wang2020adtt,yin2021tt}).

We focus on randomized algorithms for the fixed-TT-rank problem of the approximation of TT decomposition. There is well-developed error analysis applicable to several classes of random matrices in randomized algorithms for the computation of low-rank matrix approximations (cf. \cite{drineas2016randnla,halko2011finding,mahoney2011randomized,woodruff2014sketching}). Recently, randomized algorithms were designed to compute the Tucker decomposition (cf. \cite{ahmadiasl2021randomized,che2019randomized,che2020the,che2021randomized,che2023efficient,kressner2017recompression,minster2020randomized,sun2020low-rank,zhou2014decomposition}) and the CP decomposition (cf. \cite{Battaglino2017a,biagioni2015randomized,erichson2017randomized,malik2019fast,vervliet2016a}).

For the fixed TT-rank and/or precision problem, by a successively applying projections, when using the singular value decomposition (SVD) of the matrices from the corresponding unfolding, Oseledets \cite{oseledets2010tt} obtained a quasi-best approximation of the TT decomposition and the related TT-rank. This algorithm is called TT-SVD and is summarized in Algorithm \ref{RTT:alg1}. Note that TT-cross \cite{savostyanov2011fast} is another well-known algorithm for the fixed-precision problem. By replacing the standard Gaussian vectors as the Kronecker product of the standard Gaussian vectors in the adaptive range finder \cite{halko2011finding}, Che and Wei \cite{che2019randomized} proposed an adaptive randomized algorithm for the fixed-precision problem. Alger {\it et al.} \cite{alger2020tensor} presented a randomized algorithm based on actions of the tensor as a vector-valued multilinear function. Hence, for a given $0<\epsilon<1$, {\bf the first objective} is to design several strategies to estimate the $\epsilon$-TT-rank and the corresponding approximation of the TT decomposition.

For a given TT-rank, Holtz {\it et al.} \cite{holtz2012the} considered two versions of a generalised alternating least square scheme for computing the TT decomposition: one is the alternating least squares (ALS) algorithm, and another is a modified approach of ALS, denoted by MALS. Recently, many researchers focus on the fixed TT-rank problem. For example, by generalizing the algorithm in \cite{halko2011finding} to the case of tensors, Huber {\it et al.} \cite{huber2017a} derived a randomized algorithm for the approximation of the TT decomposition. Li {\it et al.} \cite{li2022faster} proposed a new quasi-optimal fast TT decomposition algorithm for large-scale sparse tensors with proven correctness and the upper bound of computational complexity derived. Shi {\it et al.} \cite{shi2023parallel} derived the parallel TT decomposition, which starts with the analogy between TT-SVD and HOSVD. In this paper, {\bf the second objective} is to design efficient randomized algorithms for the fixed TT-rank problem of an approximation of the TT decomposition. These algorithms are based on the random projection and power scheme strategies.

The main contributions are listed as follows:
\begin{enumerate}
\item For a given $0<\epsilon<1$, we first derive a new strategy to obtain a desired $\epsilon$-TT-rank of a tensor. Numerical examples illustrate that for some test tensors, the $\epsilon$-TT-rank obtained from this strategy is between that obtained from TT-SVD and TT-cross.
\item We then propose an efficient adaptive randomized algorithm for the fixed precision problem, which is summarized in Algorithm \ref{RTT:algadapt}. From this adaptive randomized algorithm, for a given $0<\epsilon<1$, we can obtain an approximation of the TT decomposition and the related $\epsilon$-TT-rank.
\item Similar to the work in \cite{che2020the}, we obtain a three-stage algorithm for the fixed TT-rank problem, as shown in Algorithm \ref{RTT:alg3}. The random matrix $\mathbf{G}$ used in Algorithm \ref{RTT:alg3} can be chosen as a standard Gaussian matrix or the Khatri-Rao product of the standard Gaussian matrices. In Section \ref{randomizedTT:sec6}, we also consider other choices of the random matrix $\mathbf{G}$, such as the SpEmb matrix (sparse subspace embedding) \cite{aizenbud2016randomized,clarkson2013low}, the SDCT matrix (subsampled randomized discrete cosine transform) \cite{avron2010blendenpik} and the Kronecker product of standard Gaussian matrices. Note that the SRFT matrix (subsampled randomized discrete Fourier transform) \cite{ailon2009the,woolfe2008a} and the SRHT matrix (subsampled randomized Hadamard transform) \cite{boutsidis2013improved} are similar to the SDCT matrix.
\item Finally, to reduce the number of matrix-matrix multiplications and the cost to generate the random matrix $\mathbf{G}$ used in Algorithm \ref{RTT:alg3}, another efficient algorithm for the fixed-TT-rank problem is proposed in Algorithm \ref{RTT:alg2}. Note that Algorithm \ref{RTT:alg2} can be viewed as the generalization of the work in \cite{bjarkason2019pass} to the high-order tensor case.
\end{enumerate}
\subsection{Notations and organizations}
We now introduce some notations used in this paper. We use lower case letters (e.g. $x,u,v$) for scalars, lower case bold letters (e.g. $\mathbf{x},\mathbf{u},
\mathbf{v}$) for vectors, bold capital letters (e.g. $\mathbf{A},\mathbf{B},\mathbf{C}$) for matrices, and calligraphic letters $\mathcal{A},\mathcal{B},\mathcal{C},\dots$ for tensors. This notation is consistently used for the lower-order parts of a given structure. For example, the entry with the row index $i$ and the column index $j$ in a matrix ${\bf A}$, that is, $({\bf A})_{ij}$, is represented as $a_{ij}$ (also $(\mathbf{x})_i=x_i$ and $(\mathcal{A})_{i_1i_2\dots i_N}=a_{i_1i_2\dots i_N}$). We use $\mathbf{A}^\top$ to denote the transpose of $\mathbf{A}\in\mathbb{R}^{I\times J}$ and $\mathbf{I}_I$ to denote the identity matrix in $\mathbb{R}^{I\times I}$. We use ${\rm Tr}(\mathbf{A})$ to denote the trace of $\mathbf{A}\in\mathbb{R}^{I\times I}$, that is, ${\rm Tr}(\mathbf{A})=\mathbf{A}(1,1)+\mathbf{A}(2,2)+\dots+\mathbf{A}(I,I)$.

The generalization of vectors and matrices is named as a tensor, denoted by $\mathcal{A}\in\mathbb{R}^{I_1\times I_2\times \dots\times I_N}$. All entries of $\mathcal{A}$ are given by $\mathcal{A}(i_1,i_2,\dots, i_N)\in\mathbb{R}
$ with $i_n=1,2,\dots,I_n$ and $n=1,2,\dots,N.$ For given two matrices $\mathbf{A}\in\mathbb{R}^{I_1\times J_1}$ and $\mathbf{B}\in \mathbb{R}^{I_2\times J_2}$, its Kronecker product $\mathbf{A}\otimes \mathbf{B}\in\mathbb{R}^{I_1I_2\times J_1J_2}$ is given by
\begin{equation*}
    \mathbf{A}\otimes \mathbf{B}=
    \begin{pmatrix}
    \mathbf{A}(1,1)\mathbf{B}& \mathbf{A}(1,2)\mathbf{B}& \dots & \mathbf{A}(1,J_1)\mathbf{B}\\
    \mathbf{A}(2,1)\mathbf{B}& \mathbf{A}(2,2)\mathbf{B}& \dots & \mathbf{A}(2,J_1)\mathbf{B}\\
    \vdots & \vdots & \ddots & \vdots\\
    \mathbf{A}(I_1,1)\mathbf{B}& \mathbf{A}(I_1,2)\mathbf{B}& \dots & \mathbf{A}(I_1,J_1)\mathbf{B}\\
    \end{pmatrix}.
\end{equation*}
The Khatri-Rao product of $\mathbf{A}\in\mathbb{R}^{I_1\times J}$ and $\mathbf{B}\in \mathbb{R}^{I_2\times J}$ is denoted by $\mathbf{A}\odot\mathbf{B}\in\mathbb{R}^{I_1I_2\times J}$. In detail, the $j$-th column of $\mathbf{A}\odot\mathbf{B}$ is $\mathbf{A}(:,j)\otimes\mathbf{B}(:,j)$ with $j=1,2,\dots,J$. A matrix $\mathbf{Q}\in\mathbb{R}^{I\times K}$ with $I>K$ is orthonormal if $\mathbf{Q}^\top\mathbf{Q}=\mathbf{I}_K$.

The rest of this paper is organized as follows. Some basic definitions and results are introduced in Section \ref{randomizedTT:sec2}. In Section \ref{randomizedTT:sec3}, based on random projection, the power scheme and the SVD, we propose two randomized algorithms for the fixed TT-rank problem of an approximation of the TT decomposition. In Section \ref{randomizedTT:sec4}, based on the bounds for the singular values of the standard Gaussian matrix or the sub-Gaussian matrix with independent columns, we derive the upper bound of $\|\mathcal{A}-\mathcal{Q}_1\times_{2}^1\mathcal{Q}_2\times_{3}^1\dots\times_{3}^1\mathcal{Q}_N\|_F$, where all the $\mathbf{Q}_n$ are obtained from the proposed algorithms. In Section \ref{randomizedTT:sec5}, we deduce an adaptive randomized algorithm for the fixed precision problem of an approximation of the TT decomposition and analyze the accuracy of $\|\mathcal{A}-\mathcal{Q}_1\times_{2}^1\mathcal{Q}_2\times_{3}^1\dots\times_{3}^1\mathcal{Q}_N\|_F^2$ in floating point arithmetic. In Section \ref{randomizedTT:sec6}, some examples are used to illustrate the accuracy and efficiency of our algorithms. This paper is concluded in Section \ref{randomizedTT:sec7}.

\section{Preliminaries}
\label{randomizedTT:sec2}

For a given $n=1,2,\dots,N$, the tensor $\mathcal{C}=\mathcal{A}\times_{n}{\bf B}$ is the mode-$n$ product \cite{cichocki2009nonnegative,kolda2009tensor} of $\mathcal{A}\in \mathbb{R}^{I_1\times I_2\times \dots \times I_N}$ by ${\bf B}\in \mathbb{R}^{J_n\times I_n}$ and its entries are given by
\begin{equation*}
\begin{split}
&\mathcal{C}(i_{1},\dots, i_{n-1},j,i_{n+1},\dots, i_{N})\\
&=\sum_{i_{n}=1}^{I_n}\mathcal{A}(i_{1},\dots, i_{n-1},i_{n},i_{n+1},\dots, i_{N})\mathbf{B}(j,i_{n}).
\end{split}
\end{equation*}

The mode-$(n,m)$ product \cite{grasedyck2013a,oseledets2010tt}, denoted by $\mathcal{C}=\mathcal{A}\times_{n}^m\mathcal{B}$, of $\mathcal{A}\in \mathbb{R}^{I_1\times I_2\times \dots \times I_N}$ and $\mathcal{B}\in \mathbb{R}^{J_1\times J_2\times \dots\times J_M}$ with $I_n=J_m$ is defined by
\begin{equation*}
\begin{split}
&\mathcal{C}(i_1,\dots, i_{n-1},i_{n+1},\dots, i_N,j_1,\dots, j_{m-1},i_{m+1},\dots, j_N)\\
&=\sum_{i_n=1}^{I_n}\mathcal{A}(i_1,\dots, i_{n-1},i_n,i_{n+1},\dots, i_N)\cdot\mathcal{B}(j_1,\dots, j_{m-1},i_n,j_{m+1},\dots, j_M).
\end{split}
\end{equation*}

The inner product of $\mathcal{A}\in \mathbb{R}^{I_1\times I_2\times \dots \times I_N}$ and $\mathcal{B}\in \mathbb{R}^{I_1\times I_2\times \dots \times I_N}$ is defined as \cite{kolda2009tensor}
$$\langle\mathcal{A},\mathcal{B}\rangle=\sum_{i_{1}=1}^{I_1} \sum_{i_{2}=1}^{I_2} \dots \sum_{i_{N}=1}^{I_N}\mathcal{A}(i_{1},i_{2},\dots, i_{N})\mathcal{B}(i_{1},i_{2},\dots, i_{N}),$$
and the {\it Frobenius norm} of a tensor $\mathcal{A}$ is given by  $\|\mathcal{A}\|_{F}=\sqrt{\langle\mathcal{A},\mathcal{A}\rangle}$.

As shown in \cite{kolda2009tensor}, ${\bf A}_{(n)}$ is the mode-$n$ unfolding matrix of $\mathcal{A}\in \mathbb{R}^{I_1\times I_2\times \dots \times I_N}$ and is obtained by arranging the mode-$n$ fibers into columns of a matrix. The $(i_n,j)$ element of ${\bf A}_{(n)}$ is equivalent to $\mathcal{A}(i_{1},i_{2},\dots, i_{N})$, where
\begin{equation*}
\begin{split}
j=&i_1+(i_{2}-1)I_1+\dots+(i_{n-1}-1)I_1\dots I_{n-2}\\
&+(i_{n+1}-1)I_1\dots I_{n-1}\\
&+\dots+(i_N-1)I_1\dots I_{n-1}I_{n+1}\dots I_{N-1}.
\end{split}
\end{equation*}

Another form for unfolding a tensor $\mathcal{A}\in \mathbb{R}^{I_1\times I_2\times \dots \times I_N}$ into a matrix is denoted by ${\bf A}_{([n])}$ \cite{oseledets2011tensor}, whose $(i,j)$-element is equivalent to $\mathcal{A}(i_{1},i_{2},\dots, i_{N})$, with
\begin{equation*}
\begin{split}
i&=i_1+(i_2-1)I_1+\dots +(i_n-1)I_1\dots I_{n-1},\\
j&=i_{n+1}+(i_{n+2}-1)I_{n+1}+\dots +(i_N-1)I_{n+1}\dots I_{N-1}.
\end{split}
\end{equation*}

We use the MATLAB command {\it reshape} to reorganize elements of a tensor. In detail, for $m=1,2,\dots,M$ and $n=1,2,\dots,N$, let $M$ positive integers $J_m$ and $N$ positive integers $I_n$ satisfy $J_1J_2\dots J_M=I_1I_2\dots I_N$. If $\mathcal{A}\in\mathbb{R}^{I_1\times I_2\times \dots\times I_N}$, then ${\it reshape}(\mathcal{A},[J_1,J_2,\dots,J_M])$ returns a tensor in $\mathbb{R}^{J_1\times J_2\times \dots\times J_M}$ by stacking entries according to their multi-index.

\subsection{TT decomposition}
\label{randomizedTT:sec2:1}
The TT decomposition (cf. \cite{grasedyck2013a,oseledets2011tensor}) of $\mathcal{A}\in \mathbb{R}^{I_1\times I_2\times \dots \times I_N}$ is given by
\begin{equation*}
\mathcal{A}(i_{1},\dots, i_{N})\approx\sum_{j_1=1}^{\mu_1}\dots\sum_{j_{N-1}=1}^{\mu_{N-1}}
\mathcal{Q}_1(i_1,j_1)\dots \mathcal{Q}_N(j_{N-1},i_N),
\end{equation*}
where $\mathcal{Q}_1(i_1)=\mathcal{Q}_1(i_1,:)$, $\mathcal{Q}_n(i_n)=\mathcal{Q}_n(:,i_n,:)\ (n=2,3,\dots,N-1)$ and $\mathcal{Q}_N(i_N)=\mathcal{Q}_n(:,i_N)\in\mathbb{R}$. The TT decomposition of $\mathcal{A}$ can be also represented as
\begin{equation*}
    \begin{split}
  \mathcal{A}&\approx(((\mathcal{Q}_1\times_{2}^1\mathcal{Q}_2)\times_{3}^1)
\dots\times_{N-1}^1\mathcal{Q}_{N-1})\times_{N}^1\mathcal{Q}_N\\
&=\mathcal{Q}_1\times_{2}^1(\mathcal{Q}_2\times_{3}^1
\dots\times_{3}^1(\mathcal{Q}_{N-1}\times_{3}^1\mathcal{Q}_N)).
    \end{split}
\end{equation*}
For clarity, in the rest, the term $\mathcal{Q}_1\times_{2}^1(\mathcal{Q}_2\times_{3}^1
\dots\times_{3}^1(\mathcal{Q}_{N-1}\times_{3}^1\mathcal{Q}_N))$ is simplified as $\mathcal{Q}_1\times_{2}^1\mathcal{Q}_2\times_{3}^1
\dots\times_{3}^1\mathcal{Q}_{N-1}\times_{3}^1\mathcal{Q}_N$.

The TT-rank of $\mathcal{A}$ is defined as $$\{R_1,R_2,\dots,R_{N-1}\}=\{{\rm rank}(\mathbf{A}_{([1])}),{\rm rank}(\mathbf{A}_{([2])}),\dots,{\rm rank}(\mathbf{A}_{([N-1])})\}.$$

Based on the TT decomposition, several other decomposition have been suggested, such as the QTT decomposition {\rm \cite{khoromskij2011o(dlogN)}}, the QTT-Tucker decomposition and {\rm \cite{dolgov2013two-level}} the Hierarchical Tucker decomposition {\rm \cite{ballani2013black,grasedyck2010hierarchical,hackbusch2009a}}.

For a given tolerance $0<\epsilon<1$, an approximation of the TT decomposition of $\mathcal{A}\in \mathbb{R}^{I_1\times I_2\times \dots \times I_N}$ and its corresponding TT-rank are obtained from TT-SVD (see \cite[Algorithm 1]{oseledets2011tensor}). We call the corresponding TT-rank as {\it an $\epsilon$-TT-rank} of $\mathcal{A}$. Note that this definition is similar to the numerical rank of a matrix.

\begin{algorithm}
\caption{TT-SVD \cite[Algorithm 1]{oseledets2011tensor}}
\label{RTT:alg1}
\begin{algorithmic}[1]
\STATEx {\bf Input:} $\mathcal{A}\in \mathbb{R}^{I_1\times I_2\times \dots \times I_N}$, and $0<\epsilon<1$.
\STATEx {\bf Output:} An approximation of the TT decomposition of $\mathcal{A}$: $\mathcal{B}=\mathcal{Q}_1\times_2^1\mathcal{Q}_2\times_3^1\dots\times_3^1\mathcal{Q}_N$ and $(N-1)$ positive integers $\{\mu_1,\mu_2,\dots,\mu_{N-1}\}$.
\STATE Let $\mu_0=1$ and compute $\delta=\frac{\epsilon}{\sqrt{N-1}}\|\mathcal{A}\|_F$.
\STATE Set $\mathbf{A}_0=\mathbf{A}_{(1)}$.
\FOR {$n=1,2,\ldots,N-1$}
\STATE Reshape $\mathbf{A}_{n-1}$ as a matrix $\mathbf{A}_n\in\mathbb{R}^{\mu_{n-1}I_n\times \prod_{m=n+1}^NI_m}$.
\STATE Compute a low rank approximation ${\bf U}{\bf \Sigma} {\bf V}^\top$ of ${\bf A}_n$ such that $\|{\bf A}_n-{\bf U}{\bf \Sigma} {\bf V}^\top\|_F\leq \delta$ and $\mu_n={\rm rank}({\bf U})$.
\STATE Reshape $\mathbf{U}$ as a tensor  $\mathcal{Q}_n\in\mathbb{R}^{\mu_{n-1}\times I_n\times \mu_n}$ and update ${\bf A}_{n}={\bf \Sigma}{\bf V}^\top$.
\ENDFOR
\STATE Reshape $\mathbf{A}_{N-1}$ as a tensor  $\mathcal{Q}_N\in\mathbb{R}^{\mu_{N-1}\times I_N}$.
\STATE Form $\mathcal{B}=\mathcal{Q}_1\times_2^1\mathcal{Q}_2\times_3^1\dots\times_3^1\mathcal{Q}_N$.
\end{algorithmic}
\end{algorithm}

Since each step of Algorithm {\rm \ref{RTT:alg1}} yields a TT-core, Algorithm {\rm \ref{RTT:alg1}} is a sequential algorithm. Meanwhile, Daas {\it et al.} \cite{daas2022parallel} presented efficient parallel algorithms for performing mathematical operations for low-rank tensors represented in the TT format, such as addition, elementwise multiplication, computing norms and inner products, orthonormalization, and rounding (rank truncation). Shi {\it et al.} \cite{shi2023parallel} proposed parallelizable algorithms for computing an approximation of the TT decomposition and provided theoretical guarantees of accuracy, parallelization methods and scaling analysis. R\"{o}hrig-Z\"{o}llner {\it et al.} \cite{rohrig2022performance} focused on the computation of the TT decomposition on current multicore CPUs.

We now review the computational complexity of Algorithm \ref{RTT:alg1}. When we set $I_n=I$ with $n=1,2,\dots,N$ and $\mu_n=\mu$ with $n=1,2,\dots,N-1$, TT-SVD requires
\begin{equation*}
    O(I^N)+O(I^N\mu)+(N-2)O(I\mu^2)+\sum_{n=1}^{N-1}I^n\mu+\sum_{n=2}^{N-1}(O(I^n\mu)+O(I^n\mu^2))
\end{equation*}
operations for obtaining an approximation of the TT decomposition of $\mathcal{A}$.

\section{Randomized algorithms for the fixed-TT-rank problem}
\label{randomizedTT:sec3}
From the work in \cite{che2019randomized}, when the desired TT-rank of $\mathcal{A}\in\mathbb{R}^{I_1\times I_2\times \dots\times I_N}$ is given by $\{\mu_1,\mu_2,\dots,\mu_{N-1}\}$, the fixed-TT-rank problem for the TT approximation is summarized as follows.
\begin{problem}
\label{RTT:prob2}
    Suppose that $\mu_n<\min\{\prod_{k=1}^nI_k,\prod_{k=n+1}^NI_{k}\}$ with $n=1,2,\dots,N-1$. For a  tensor $\mathcal{A}\in \mathbb{R}^{I_1\times I_2\times \dots \times I_N}$, we want to find $N$ tensors $\mathcal{Q}_n\in \mathbb{R}^{\mu_{n-1}\times I_n\times \mu_{n}}$ with $\mu_0=\mu_N=1$ to solve the following optimization problem
    \begin{equation*}
        \min_{\mathcal{G}_1,\dots,\mathcal{G}_N}
        \|\mathcal{A}-\mathcal{G}_1\times_{3}^1\mathcal{G}_2\times_{3}^1\dots\times_{3}^1\mathcal{G}_N\|_F,
    \end{equation*}
    where for $n=1,2,\dots,N-1$, the core tensors $\mathcal{G}_n$ satisfy
    \begin{equation*}
        \mathbf{G}_n^\top\mathbf{G}_n=\mathbf{I}_{\mu_n},\quad \mathbf{G}_n={\rm reshape}(\mathcal{G}_n,[\mu_{n-1}I_n,\mu_n]).
    \end{equation*}
\end{problem}
Suppose that $\{\mathcal{Q}_1,\mathcal{Q}_2,\dots,\mathcal{Q}_N\}$ is a solution of Problem \ref{RTT:prob2}. Let $\mathbf{Q}_1=\mathcal{Q}_1$ and $\mathbf{Q}_n={\rm reshape}(\mathcal{Q}_n,[\mu_{n-1}I_n,\mu_n])$ with $n=2,3,\dots,N-1$. Let $\mathcal{A}_0=\mathcal{A}$ and define
\begin{equation*}
    \begin{cases}
        \mathcal{A}_{1}=\mathcal{Q}_{1}\times_{1}^{1}\mathcal{A}_{0}
        \in\mathbb{R}^{\mu_{1}\times I_{2}\times\dots\times I_N},\\
        \mathcal{A}_{n}=\mathcal{Q}_{n}\times_{1,2}^{1,2}\mathcal{A}_{n-1}\in\mathbb{R}^{\mu_{n}\times I_{n+1}\times\dots\times I_N},
    \end{cases}
\end{equation*}
with $n=2,3,\dots,N-1$.

Let $\mathbf{A}_n={\rm reshape}(\mathcal{A}_{n-1},[\mu_{n-1}I_{n},I_{n+1}\dots I_N])$ with $n=1,2,\dots,N-1$. Then we have
\begin{equation*}
    \begin{split}
        &\mathcal{A}-\mathcal{Q}_1\times_{2}^1\mathcal{Q}_2\times_{3}^1\dots\times_{3}^1\mathcal{Q}_N\\
        &=\mathcal{A}-\mathcal{Q}_1\times_{2}^1\mathcal{A}_1+\mathcal{Q}_1\times_{2}^1\mathcal{A}_1-\mathcal{Q}_1\times_{2}^1\mathcal{Q}_2\times_3^1\mathcal{A}_2\\
        &+\dots+\mathcal{Q}_1\times_2^1\mathcal{Q}_2\times_3^1\dots\times_3^1
        \mathcal{Q}_{N-2}\times_3^1\mathcal{A}_{N-2}\\
        &-\mathcal{Q}_1\times_2^1\mathcal{Q}_2\times_3^1\dots\times_3^1\mathcal{Q}_{N-1}\times_3^1\mathcal{A}_{N-1},
    \end{split}
\end{equation*}
with $\mathcal{Q}_N=\mathcal{A}_{N-1}$. Now we have
\begin{equation*}
    \begin{split}
        &\|\mathcal{A}-\mathcal{Q}_1\times_{2}^1\mathcal{Q}_2\times_{3}^1\dots\times_{3}^1\mathcal{Q}_N\|_F\leq\|\mathcal{A}-\mathcal{Q}_1\times_{2}^1\mathcal{A}_1\|_F\\
        &+\|\mathcal{Q}_1\times_{2}^1\mathcal{A}_1-\mathcal{Q}_1\times_{2}^1\mathcal{Q}_2\times_3^1\mathcal{A}_2\|_F\\
        &+\dots+\|\mathcal{Q}_1\times_2^1\mathcal{Q}_2\times_3^1\dots
        \times_3^1\mathcal{Q}_{N-2}\times_3^1\mathcal{A}_{N-2}\\
        &-\mathcal{Q}_1\times_2^1\mathcal{Q}_2\times_3^1\dots\times_3^1\mathcal{Q}_{N-2}\times_3^1\mathcal{Q}_{N-1}\times_3^1\mathcal{A}_{N-1}\|_F.
    \end{split}
\end{equation*}
Note that we have
\begin{equation*}
    \begin{split}
        \|\mathcal{A}-\mathcal{Q}_1\times_{2}^1\mathcal{A}_1\|_F&=\|\mathcal{A}-\mathcal{Q}_1\times_{2}^1(\mathcal{Q}_1\times_{1}^1\mathcal{A})\|_F\\
        &=\|\mathcal{A}-(\mathcal{Q}_1\times_{2}^2\mathcal{Q}_1)\times_{1}^1\mathcal{A}\|_F\\
        &=\|\mathcal{A}\times_1(\mathbf{I}_{I_1}-\mathbf{Q}_1\mathbf{Q}_1^\top)\|_F\\
        &=\|(\mathbf{I}_{I_1}-\mathbf{Q}_1\mathbf{Q}_1^\top)\mathbf{A}_{1}\|_F,
    \end{split}
\end{equation*}
and
\begin{equation*}
    \begin{split}
        \|\mathcal{Q}_1\times_{2}^1\mathcal{A}_1-\mathcal{Q}_1\times_{2}^1\mathcal{Q}_2\times_3^1\mathcal{A}_2\|_F&=\|\mathcal{Q}_1\times_{2}^1\mathcal{A}_1-\mathcal{Q}_1\times_{2}^1\mathcal{Q}_2\times_3^1(\mathcal{Q}_2\times_{1,2}^{1,2}\mathcal{A}_1)\|_F\\
        &=\|\mathcal{Q}_1\times_{2}^1\mathcal{A}_1-\mathcal{Q}_1\times_{2}^1((\mathcal{Q}_2\times_3^3\mathcal{Q}_2)\times_{1,2}^{1,2}\mathcal{A}_1)\|_F\\
        &\leq \|\mathcal{A}_1-(\mathcal{Q}_2\times_3^3\mathcal{Q}_2)\times_{1,2}^{1,2}\mathcal{A}_1\|_F\\
        &= \|(\mathbf{I}_{\mu_1I_2}-\mathbf{Q}_2\mathbf{Q}_2^\top)\mathbf{A}_2\|_F.
    \end{split}
\end{equation*}
Let $\mathcal{B}_{n-1}=\mathcal{A}_{n-1}-\mathcal{Q}_{n}\times_3^1\mathcal{A}_{n}$, $\mathcal{B}_{k}=\mathcal{Q}_{k}\times_{3}^1\mathcal{B}_{k+1}$ with $k=n-2,\dots,3,2$, and $\mathcal{B}_1=\mathcal{Q}_1\times_2^1\mathcal{B}_2$. It is clear to see that $\mathcal{B}_1=\mathcal{Q}_1\times_2^1\mathcal{Q}_2\times_{3}^1\dots\times_3^1\mathcal{Q}_{n-1}\times_3^1(\mathcal{A}_{n-1}-\mathcal{Q}_{n}\times_3^1\mathcal{A}_{n})$. Then
\begin{equation*}
    \begin{split}
        \|\mathcal{B}_1\|_F&=\|\mathcal{Q}_1\times_2^1\mathcal{B}_2\|_F=\|\mathbf{Q}_1{\rm reshape}(\mathcal{B}_2,[\mu_1,I_2\dots I_N])\|_F\\
        &\leq \|{\rm reshape}(\mathcal{B}_2,[\mu_1,I_2\dots I_N])\|_F=\|\mathcal{B}_2\|_F\\
        &\leq\dots\leq\|\mathcal{B}_{n-1}\|_F= \|\mathcal{A}_{n-1}-\mathcal{Q}_{n}\times_3^1\mathcal{A}_{n}\|_F\\
        &=\|\mathcal{A}_{n-1}-(\mathcal{Q}_{n}\times_{3}^3\mathcal{Q}_n)\times_{1,2}^{1,2}\mathcal{A}_{n-1}\|_F\\
        &=\|(\mathbf{I}_{\mu_{n-1}I_n}-\mathbf{Q}_n\mathbf{Q}_n^\top)\mathbf{A}_{n}\|_F.
    \end{split}
\end{equation*}
with $n=3,5,\dots,N-1$. Overall, then we have
\begin{equation}
\label{RTT:eqnadd1}
    \begin{split}
        &\|\mathcal{A}-\mathcal{Q}_1\times_{2}^1\mathcal{Q}_2\times_{3}^1\dots\times_{3}^1\mathcal{Q}_N\|_F\leq
        \sum_{n=1}^{N-1}\|\mathbf{A}_{n}-\mathbf{Q}_{n}\mathbf{Q}_{n}^\top\mathbf{A}_{n}\|_F.
    \end{split}
\end{equation}

For the case of $N=4$, the overall process is illustrated in Figure \ref{TT-SVD:figadd1}.

\begin{figure}[htb]
	\centering
	\includegraphics[scale=0.15]{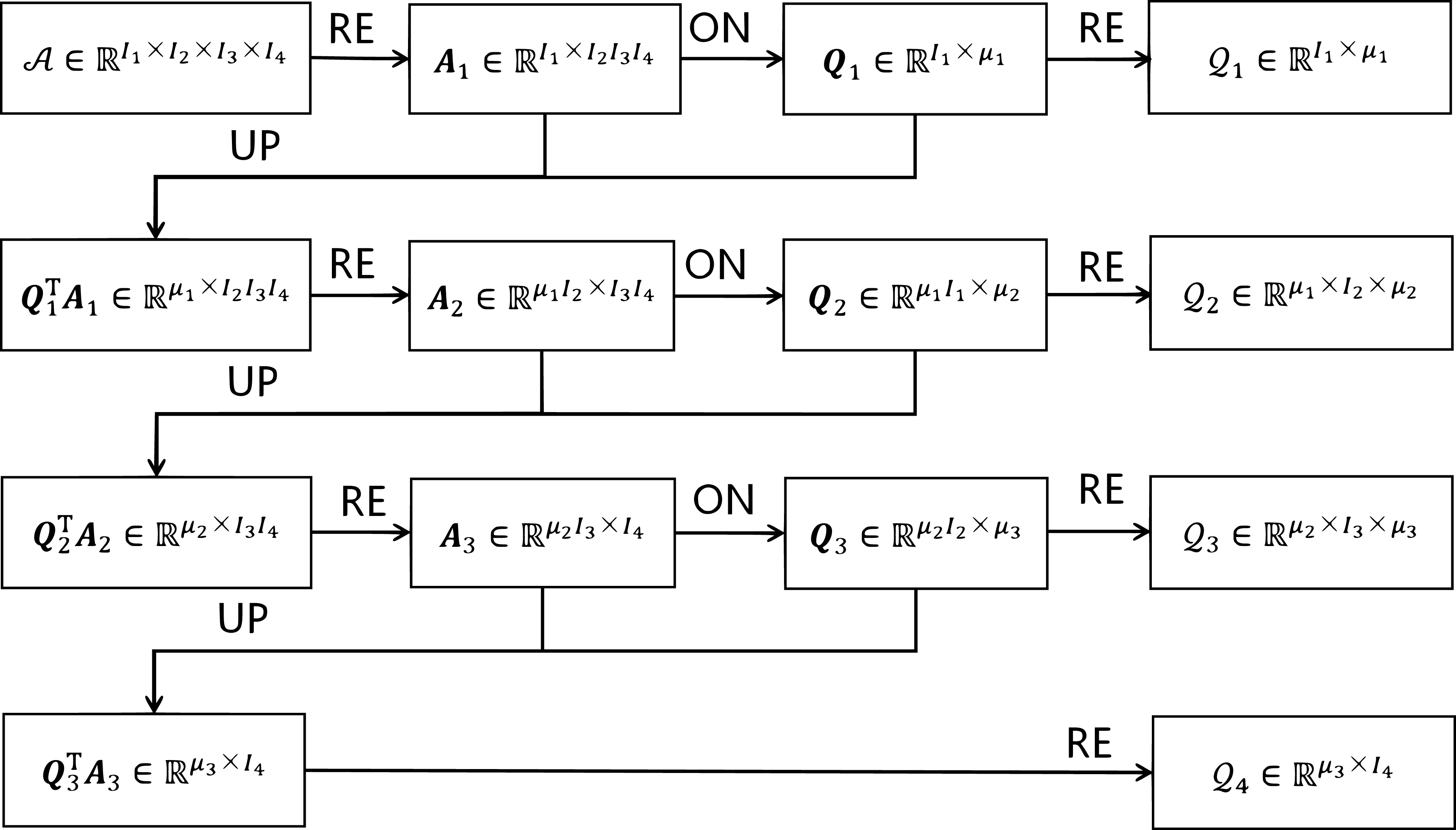}\\
	\caption{Illustration for the way to find an approximate solution for Problem \ref{RTT:prob2} with $N=4$. In the diagram, RE, ON and UP means reshaping, an orthonormal basis, and updating, respectively.}\label{TT-SVD:figadd1}
\end{figure}

In Figure \ref{TT-SVD:figadd1}, ``RE'' represents the reshaping operation between tensors and matrices, ``ON'' represents an orthonormal basis to approximate the column space of any matrix with given rank parameter and ``UP'' means to update the temporary matrix. It is clear to see that
\begin{equation*}
    \begin{split}
        \mathbf{Q}_n^\top\mathbf{A}_n&=\mathbf{Q}_n^\top{\rm reshape}(\mathcal{A}_{n-1},[\mu_{n-1}I_n,I_{n+1}\dots I_N])\\
        &={\rm reshape}(\mathcal{A}_{n},[\mu_n,I_{n+1}\dots I_N]).
    \end{split}
\end{equation*}

An approximate solution for Problem \ref{RTT:prob2} can be obtained by solving the following problem.
\begin{problem}
\label{RTT:prob3}
    Suppose that $\mathcal{A}\in \mathbb{R}^{I_1\times I_2\times \dots \times I_N}$. We solve the following $(N-1)$ optimization problems.
    \begin{enumerate}
        \item[{\rm (a)}] For the case of $n=1$, when $\mu_1\leq\min\{I_1,I_2\dots I_N\}$, the objective is to find an orthonormal matrix ${\bf Q}_1\in \mathbb{R}^{I_1\times \mu_1}$ such that
        \begin{equation*}
            {\bf Q}_1={\rm argmin}_{\mathbf{U}_1}\|\mathbf{A}_1-\mathbf{U}_1\mathbf{U}_1^{\top}\mathbf{A}_1\|_F
        \end{equation*}
        where $\mathbf{U}_1\in\mathbb{R}^{I_1\times \mu_1}$ is orthonormal.
        \item[{\rm (b)}] For the case of $n=2,3,\dots,N-1$, when $\mu_n< \min\{\mu_{n-1}I_n,I_{n+1}\dots I_N\}$, the objective is to find an orthonormal matrix ${\bf Q}_n\in \mathbb{R}^{\mu_{n-1}I_n\times \mu_n}$ such that
        \begin{equation*}
            \begin{split}
                & \mathbf{Q}_n={\rm argmin}_{\mathbf{U}_n}\|\mathbf{A}_n-\mathbf{U}_n\mathbf{U}_n^{\top}\mathbf{A}_n\|_F,
            \end{split}
        \end{equation*}
        where $\mathbf{U}_n\in\mathbb{R}^{\mu_{n-1}I_n\times \mu_n}$ is orthonormal.
	\end{enumerate}
\end{problem}
By setting $\mathcal{Q}_1=\mathbf{Q}_1$, $\mathcal{Q}_n={\rm reshape}(\mathbf{Q}_n,[\mu_{n-1},I_n,\mu_n])$ with $n=2,3,\dots,N-1$ and $\mathcal{Q}_N=\mathcal{A}_{N-1}$, then the $N$-tuple $\{\mathcal{Q}_1,\mathcal{Q}_2,\dots,\mathcal{Q}_N\}$ is an approximate solution for Problem \ref{RTT:prob2}. According to the above descriptions, a solution for Problem \ref{RTT:prob2} can be obtained by solving Problem \ref{RTT:prob3}, with $n$ from $1$ to $N-1$ inductively.

\subsection{Framework for the first algorithm}

It is worthy noting that each subproblem in Problem \ref{RTT:prob3} is the low rank matrix approximation. In spirit of randomized NLA (numerical linear algebra), the framework of the first randomized algorithm is given in Algorithm \ref{RTT:alg3}.

\begin{algorithm}[htb]
\caption{The first randomized algorithm for an approximation of the TT decomposition with a given TT-rank}
\label{RTT:alg3}
\begin{algorithmic}[1]
\STATEx {\bf Input:} A tensor $\mathcal{A}\in \mathbb{R}^{I_1\times I_2\times \dots \times I_N}$, the desired TT-rank $\{\mu_1,\mu_2,\dots,\mu_{N-1}\}$, an oversampling parameter $R$ and the number of subspace iterations $q\geq 0$.
\STATEx {\bf output:} An approximation of the TT decomposition of $\mathcal{A}$: $\mathcal{B}=\mathcal{Q}_1\times_2^1\mathcal{Q}_2\times_3^1\dots\times_3^1\mathcal{Q}_N$.
\STATE Set $\mu_0=1$, $\mathbf{Q}_0\in\mathbb{R}^{I_1\times I_1}$ being the identity matrix and $\mathbf{A}_0=\mathbf{A}_{(1)}$.
\FOR {$n=1,2,\dots,N-1$}
\STATE Reshape $\mathbf{Q}_{n-1}^\top\mathbf{A}_{n-1}$ as $\mathbf{A}_n\in\mathbb{R}^{\mu_{n-1}I_n\times \prod_{m=n+1}^NI_m}$.
\STATE Generate a matrix ${\bf \Omega}_{n}\in\mathbb{R}^{\prod_{m=n+1}^NI_m\times (\mu_n+R)}$.
\STATE Compute ${\bf Z}_n=(\mathbf{A}_n\mathbf{A}_n^\top)^q\mathbf{A}_n{\bf \Omega}_n$.
\STATE Find an orthonormal matrix $\mathbf{Q}_n\in\mathbb{R}^{\mu_{n-1}I_n\times \mu_n}$ such that there exists $\mathbf{S}_n\in\mathbb{R}^{\mu_n\times (\mu_n+K)}$ for which
\begin{equation*}
    \begin{split}
        \|\mathbf{Q}_n\mathbf{S}_n-\mathbf{Z}_n\|_F=\Delta_{\mu_n+1}(\mathbf{Z}_n)=\left(\sum_{k=\mu_n+1}^{\min\{\mu_{n-1}I_n,\mu_n+R\}}\sigma_k(\mathbf{Z}_n)^2\right)^{1/2},
    \end{split}
\end{equation*}
where $\sigma_k(\mathbf{Z}_n)$ is the $k$-th singular value of $\mathbf{Z}_n$.
\STATE Reshape ${\bf Q}_n$ as a tensor $\mathcal{Q}_n\in\mathbb{R}^{\mu_{n-1}\times I_n\times \mu_n}$.
\ENDFOR
\STATE Reshape $\mathbf{Q}_{N-1}^\top\mathbf{A}_{N-1}$ as a tensor $\mathcal{Q}_N\in\mathbb{R}^{\mu_{N-1}\times I_N}$.
\STATE Form $\mathcal{B}=\mathcal{Q}_1\times_2^1\mathcal{Q}_2\times_3^1\dots\times_3^1\mathcal{Q}_N$.
\end{algorithmic}
\end{algorithm}

From the work in \cite{woolfe2008a}, for each $n$ in Algorithm \ref{RTT:alg3}, the orthonormal matrix $\mathbf{Q}_n$ is formed by extracting the first $\mu_n$ left singular vectors of $\mathbf{Z}_n$.

\begin{remark}
    For the case of $q=0$, when the matrix $\mathbf{\Omega}_n$ is a standard Gaussian matrix and the matrix $\mathbf{Q}_n$ is obtained by the thin QR decomposition of $\mathbf{Z}_n$, then Algorithm {\rm \ref{RTT:alg3}} is the same as Algorithm 5.1 in {\rm\cite{che2019randomized}}, and slightly different from the randomized TT-SVD in {\rm \cite{huber2017a}}, in which for each $n$, the size of all the core tensor $\mathcal{Q}_n$ is $\mu_{n-1}\times I_n\times \mu_n$.
\end{remark}

\begin{remark}
Note that Algorithm {\rm\ref{RTT:alg3}} works well for the case that all the matrices $\mathbf{Q}_{n-1}^\top\mathbf{A}_{n-1}$ exhibit some decay. When the singular values of some of these matrices decay slowly, the power iteration scheme has been proposed to relieve this weakness {\rm(}cf. {\rm \cite{martinsson2016randomized,halko2011finding,rokhlin2010randomized})}.
\end{remark}

For each $n$, we consider two ways to generate the matrix $\mathbf{\Omega}_n$ in Algorithm \ref{RTT:alg3}: the first one is to set the matrix $\mathbf{\Omega}_n\in\mathbb{R}^{I_{n+1}\dots I_N\times (\mu_n+R)}$ as a standard Gaussian matrix, and for the second one, we set $\mathbf{\Omega}_n=\mathbf{\Omega}_{n,n+1}\odot\dots\odot\mathbf{\Omega}_{n,N}$, where all the matrices $\mathbf{\Omega}_{n,m}\in\mathbb{R}^{I_m\times (\mu_n+R)}$ with $m=n+1,n+2,\dots,N$ are standard Gaussian matrices. As shown in \cite{sun2020low-rank}, the second type of the matrix $\mathbf{\Omega}_n$ is call the tensor random projection. Consequently, we denote Algorithm \ref{RTT:alg3} with these two ways by Algorithm \ref{RTT:alg3} with Gaussian and Algorithm \ref{RTT:alg3} with KR-Gaussian, respectively. We first analyze the computational complexity of Algorithm \ref{RTT:alg3} with Gaussian. In detail, we have
\begin{enumerate}
\item[(a)] For the case of $n=1$, forming the matrix $\mathbf{A}_1$ requires $O(I_1I_2\dots I_N)$ operations; generating $\mathbf{\Omega}_{1}$ requires $I_2\dots I_N(\mu_1+R)$ operations; computing the matrix $\mathbf{Z}_1$ requires $(4q+2)I_1I_2\dots I_N(\mu_1+R)$ operations; and obtaining $\mathbf{Q}_1$ requires $O(I_1(\mu_1+R)\mu_1)$ operations.
\item[(b)] For $n=2,\dots,N-1$, obtaining the matrix $\mathbf{A}_n$ of size $\mu_{n-1}I_n\times I_{n+1}\dots I_N$ requires $2I_nI_{n+1}\dots I_N\mu_{n-1}\mu_n+O(I_{n+1}\dots I_N\mu_n)$ operations; generating $\mathbf{\Omega}_{n}$ requires $I_{n+1}\dots I_N(\mu_n+R)$ operations; computing the matrix $\mathbf{Z}_n$ requires $(4q+2)I_n\dots I_N\mu_{n-1}(\mu_n+R)$ operations; obtaining $\mathbf{Q}_n$ requires $O(I_n\mu_{n-1}\mu_n(\mu_n+R))$ operations, and reshaping  $\mathbf{Q}_n$ to the core tensor $\mathcal{Q}_n$ requires $O(I_n\mu_{n-1}\mu_n)$ operations.
\item[(c)] Computing the $N$-th core $\mathcal{Q}_N$ as $\mathbf{Q}_{N-1}^\top\mathbf{A}_{N-1}$ requires $2I_{N-1}I_N\mu_{N-2}\mu_{N-1}$ operations.
\end{enumerate}

Hence, Algorithm \ref{RTT:alg3} with Gaussian requires
\begin{equation*}
\begin{split}
&\sum_{n=1}^{N-1}O(I_n\dots I_N\mu_{n-1})+2\cdot\sum_{n=2}^{N}I_{n-1}I_n\dots I_N\mu_{n-2}\mu_{n-1}\\
&+\sum_{n=1}^{N-1}I_{n+1}\dots I_N(\mu_n+R)+\sum_{n=1}^{N-1}O(I_n\mu_{n-1}\mu_n(\mu_n+R))\\
&+\sum_{n=2}^{N-1}O(I_n\mu_{n-1}\mu_n)+(4q+2)\sum_{n=1}^{N-1}I_n\dots I_{N}\mu_{n-1}(\mu_n+R)
\end{split}
\end{equation*}
operations for finding an approximation of the TT decomposition of $\mathcal{A}$ with given TT-rank $\{\mu_1,\mu_2,\dots,\mu_{N-1}\}$.
\begin{remark}
For each $n$ in Algorithm {\rm \ref{RTT:alg3}}, there exists another way to compute the matrix $\mathbf{Z}_n$ as follows: we first obtain the matrix $\mathbf{B}_n=\mathbf{A}_n\mathbf{A}_n^\top$, we then compute $\mathbf{Z}_n=\mathbf{A}_n\mathbf{\Omega}_n$, and we finally update $\mathbf{Z}_n=\mathbf{B}_n\mathbf{Z}_n$ $q$ times. This process needs $(q+2)$ matrix-matrix multiplications. However, for example, forming $\mathbf{A}_1\mathbf{A}_1^\top$ needs $2I_1^2I_2\dots I_N$ operations. Hence, in this paper, we avoid this way to obtain the matrix $\mathbf{Z}_n$.
\end{remark}
\begin{remark}
The difference among the algorithm in {\rm \cite{huber2017a}}, Algorithm \ref{RTT:alg3} with Gaussian, and Algorithm \ref{RTT:alg3} with KR-Gaussian is to generate the matrix $\mathbf{\Omega}_n$ and to obtain the orthonormal matrix $\mathbf{Q}_n$ from $\mathbf{Z}_n$. As shown in {\rm \cite{huber2017a}}, the size of $\mathbf{\Omega}_n$ is $I_{n+1}\dots I_N\times s_n$ with $s_n=\mu_n+R$ and the matrix $\mathbf{Q}_n$ is computed by the thin QR decomposition of $\mathbf{Z}_n$. For the case of $q=0$, the comparison of the computational complexities of the algorithm in {\rm \cite{huber2017a}}, Algorithm \ref{RTT:alg3} with Gaussian, and Algorithm \ref{RTT:alg3} with KR-Gaussian is listed in Table {\rm\ref{RTT:tab2}}.
\end{remark}
\begin{table*}[htb]
	\scriptsize
	\centering
	\begin{tabular}{c|c|c|c}
		 & The algorithm in {\rm \cite{huber2017a}} & Algorithm \ref{RTT:alg3} with Gaussian & Algorithm \ref{RTT:alg3} with KR-Gaussian \\
		 \hline
		 Obtain $\mathbf{A}_1$ & $O(I_1I_2\dots I_N)$ & $O(I_1I_2\dots I_N)$ & $O(I_1I_2\dots I_N)$ \\
		 \hline
		 Generate $\mathbf{\Omega}_1$ & $I_{2}I_{3}\dots I_Ns_1$ & $I_{2}I_{3}\dots I_N(\mu_{1}+R)$ & $(I_{2}+\dots+I_N)(\mu_1+R)$ \\
		 \hline
		 Compute $\mathbf{Z}_1$ & $2I_1I_{2}\dots I_Ns_1$ & $2I_1I_{2}\dots I_N(\mu_1+R)$ & $(\mu_1+R)$ \textsc{TenVecMult}s  \\
		 \hline
		 Obtain $\mathbf{Q}_1$ & $O(I_1s_1^2)$ & $O(I_1\mu_1(\mu_1+R))$ & $O(I_1\mu_1(\mu_1+R)I_1)$ \\
		 \hline
		 Compute $\mathbf{Q}_{n-1}^\top\mathbf{A}_{n-1}$ & $2I_n\dots I_N\mu_{n-1}s_n$ & $2I_n\dots I_N\mu_{n-1}\mu_n$ & $2I_n\dots I_N\mu_{n-1}\mu_n$ \\
		 \hline
		 Obtain $\mathbf{A}_n$ & $O(I_nI_{n+1}\dots I_Ns_n)$ & $O(I_nI_{n+1}\dots I_N\mu_n)$ & $O(I_nI_{n+1}\dots I_N\mu_n)$ \\
		 \hline
		 Generate $\mathbf{\Omega}_n$ & $I_{n}I_{n+1}\dots I_Ns_n$ & $I_{n}I_{n+1}\dots I_N(\mu_{n}+R)$ & $(I_{n}+\dots+I_N)(\mu_n+R)$ \\
		 \hline
		 Compute $\mathbf{Z}_n$ & $2I_nI_{n+1}\dots I_Ns_{n-1}s_n$ & $2I_nI_{n+1}\dots I_N\mu_{n-1}(\mu_n+R)$ & $(\mu_n+R)$ \textsc{TenVecMult}s \\
		 \hline
		 Obtain $\mathbf{Q}_n$ & $O(I_ns_{n-1}s_n^2)$ & $O(I_n\mu_{n-1}\mu_n(\mu_n+R))$ & $O(I_n\mu_{n-1}\mu_n(\mu_n+R))$ \\
          \hline
          Form $\mathcal{Q}_n$ & $O(I_ns_{n-1}s_n)$ & $O(I_n\mu_{n-1}\mu_n)$ & $O(I_n\mu_{n-1}\mu_n)$ \\
		 \hline
		 Obtain $\mathcal{Q}_N$ & $O(I_{N-1}I_Ns_{N-2}s_{N-1})$ & $O(I_{N-1}I_N\mu_{N-2}\mu_{N-1})$ & $O(I_{N-1}I_N\mu_{N-2}\mu_{N-1})$
	\end{tabular}
	\caption{For the case of $q=0$, the computational complexities of the algorithm in {\rm \cite{huber2017a}}, Algorithm \ref{RTT:alg3} with Gaussian and Algorithm \ref{RTT:alg3} with KR-Gaussian, where $n=2,3,\dots,N-1$.}
	\label{RTT:tab1}
\end{table*}

\begin{remark}
In Algorithm \ref{RTT:alg3} with KR-Gaussian, for each $n$, to generate $(N-n)$ standard Gaussian matrices $\mathbf{\Omega}_{n,m}$ requires $(I_{n+1}+\dots+I_N)(\mu_n+R)$ operations. Moreover, for $q=0$, the columns of $\mathbf{Z}_n$ can be implemented by the function ``ttv'' in MATLAB tensor toolbox {\rm \cite{bader2017matlab}}.

In Table {\rm \ref{RTT:tab1}}, for a tensor $\mathcal{A}\in\mathbb{R}^{I_1\times I_2\times \dots\times I_{N}}$ and $(N-1)$ vectors $\mathbf{x}_n\in\mathbb{R}^{I_n}$ with $n=2,3,\dots,N$, the symbol $\mathcal{A}\times_2\mathbf{x}_2^\top\dots\times_N\mathbf{x}_N^\top$ is denoted by a \textsc{TenVecMult} and implemented by ${\rm ttv}(\mathcal{A},\{\mathbf{x}_2,\dots,\mathbf{x}_N\},[2,\dots,N])$.
\end{remark}

\subsection{Framework of the second algorithm}

Note that in Algorithm \ref{RTT:alg3}, generating $\mathbf{\Omega}_n$ needs $\sum_{n=1}^{N-1}I_{n+1}\dots I_N(\mu_n+R)$ operations for Algorithm \ref{RTT:alg3} with Gaussian and $\sum_{n=1}^{N-1}(I_{n+1}+\dots +I_N)(\mu_n+R)$ operations for Algorithm \ref{RTT:alg3} with KR-Gaussian, and for the case of $q\geq 1$, to compute the matrix $\mathbf{Z}_n$ needs $2q+1$ matrix-matrix multiplications.

In order to reduce the computational complexity of generating the matrix $\mathbf{\Omega}_n$ and obtaining the matrix $\mathbf{Z}_n$, we deduce another randomized algorithm for the approximation of TT-decomposition of $\mathcal{A}\in\mathbb{R}^{I_1\times I_2\times \dots\times I_N}$ with a given TT-rank $\{\mu_1,\mu_2,\dots,\mu_{N-1}\}$, which is summarized in Algorithm \ref{RTT:alg2}.
\begin{algorithm}[htb]
\caption{The second randomized algorithm for an approximation of the TT decomposition with a given TT-rank}
\label{RTT:alg2}
\begin{algorithmic}[1]
\STATEx {\bf Input:} A tensor $\mathcal{A}\in \mathbb{R}^{I_1\times I_2\times \dots \times I_N}$, the desired TT-rank $\{\mu_1,\mu_2,\dots,\mu_{N-1}\}$, an oversampling parameter $R$ and the number of subspace iterations $q\geq 1$.
\STATEx {\bf Output:} An approximation of the TT decomposition of $\mathcal{A}$: $\mathcal{B}=\mathcal{Q}_1\times_2^1\mathcal{Q}_2\times_3^1\dots\times_3^1\mathcal{Q}_N$.
\STATE Set $\mu_0=1$, $\mathbf{Q}_0\in\mathbb{R}^{I_1\times I_1}$ being the identity matrix and $\mathbf{A}_0=\mathbf{A}_{(1)}$.
\FOR {$n=1,2,\dots,N-1$}
\STATE Reshape $\mathbf{Q}_{n-1}^\top\mathbf{A}_{n-1}$ as $\mathbf{A}_n\in\mathbb{R}^{\mu_{n-1}I_n\times \prod_{m=n+1}^NI_m}$.
\STATE Generate a standard Gaussian matrix ${\bf \Omega}_{n}\in\mathbb{R}^{\mu_{n-`}I_n\times (\mu_n+R)}$.
\STATE Compute ${\bf Z}_n=(\mathbf{A}_n\mathbf{A}_n^\top)^q{\bf \Omega}_n$.
\STATE Find an orthonormal matrix $\mathbf{Q}_n\in\mathbb{R}^{\mu_{n-1}I_n\times \mu_n}$ such that there exists $\mathbf{S}_n\in\mathbb{R}^{\mu_n\times (\mu_n+K)}$ for which
\begin{equation*}
\begin{split}
\|\mathbf{Q}_n\mathbf{S}_n-\mathbf{Z}_n\|_F=\Delta_{\mu_n+1}(\mathbf{Z}_n).
\end{split}
\end{equation*}
\STATE Reshape ${\bf Q}_n$ as a tensor $\mathcal{Q}_n\in\mathbb{R}^{\mu_{n-1}\times I_n\times \mu_n}$.
\ENDFOR
\STATE Reshape $\mathbf{Q}_{N-1}^\top\mathbf{A}_{N-1}$ as a tensor $\mathcal{Q}_N\in\mathbb{R}^{\mu_{N-1}\times I_N}$.
\STATE Form $\mathcal{B}=\mathcal{Q}_1\times_2^1\mathcal{Q}_2\times_3^1\dots\times_3^1\mathcal{Q}_N$.
\end{algorithmic}
\end{algorithm}

Similar to Algorithm \ref{RTT:alg3} with Gaussian, Algorithm \ref{RTT:alg2} needs
\begin{equation*}
\begin{split}
&\sum_{n=1}^{N-1}O(I_n\dots I_N\mu_{n-1})+2\cdot\sum_{n=2}^{N}I_{n-1}I_n\dots I_N\mu_{n-2}\mu_{n-1}\\
&+\sum_{n=1}^{N-1}I_{n}(\mu_n+R)+\sum_{n=1}^{N-1}O(I_n\mu_{n-1}\mu_n(\mu_n+R))\\
&+\sum_{n=2}^{N-1}O(I_n\mu_{n-1}\mu_n)+4q\sum_{n=1}^{N-1}I_n\dots I_{N}\mu_{n-1}(\mu_n+R)
\end{split}
\end{equation*}
operations for finding an approximation of the TT decomposition of $\mathcal{A}$ with given TT-rank $\{\mu_1,\mu_2,\dots,\mu_{N-1}\}$.
\section{Theoretical analysis}
\label{randomizedTT:sec4}
According to (\ref{RTT:eqnadd1}), if we estimate the upper bound of $\|\mathbf{A}_{n}-\mathbf{Q}_{n}\mathbf{Q}_{n}^\top\mathbf{A}_{n}\|_F$ with $n=1,2,\dots,N-1$, then it is easy to derive the upper bound of $\|\mathcal{A}-\mathcal{Q}_1\times_{2}^1\mathcal{Q}_2\times_{3}^1\dots\times_{3}^1\mathcal{Q}_N\|_F$, where $\mathbf{Q}_n$ is obtained from the proposed algorithms.
\subsection{The case for Algorithm \ref{RTT:alg3} with standard Gaussian matrices}
We introduce some necessary results for sub-random Gaussian matrices \cite{litvak2005smallest,rudelson2009smallest}. We first review the definition of the sub-Gaussian random variable.
\begin{definition}{\bf(see \cite[Definition 3.2]{shabat2016randomized})}
	\label{RTT:def4}
	A real valued random variable $X$ is called a sub-Gaussian random variable if there exists $b>0$ such that for all $t>0$ we have $\mathbf{E}(\exp(tX))\leq \exp(b^2t^2/2)$. A random variable $X$ is centered if $\mathbf{E}(X)=0$.
\end{definition}
The following theorem states the upper bound of the largest singular value of a random sub-Gaussian matrix.
\begin{theorem}{\bf(see \cite{litvak2005smallest})}
	\label{RTT:thm4}
	Suppose that $\mathbf{\Omega}\in \mathbb{R}^{I\times J}$ is random sub-Gaussian with $I\leq J$, $\tau\geq1$ and $a'>0$. Then
	$
	\mathbf{P}(\|\mathbf{\Omega}\|_2>a\sqrt{J})\leq \exp(-a'J)
	$,
	where $a=6\tau\sqrt{a'+4}$.
\end{theorem}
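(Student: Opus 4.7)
The plan is to prove this through the classical epsilon-net plus concentration approach, which is the workhorse tool in non-asymptotic random matrix theory and yields exactly the kind of exponential-in-$J$ tail bound claimed. Recall that for any matrix $\mathbf{\Omega}\in\mathbb{R}^{I\times J}$ we have
\begin{equation*}
\|\mathbf{\Omega}\|_2 \;=\; \sup_{x\in S^{J-1}} \|\mathbf{\Omega} x\|_2,
\end{equation*}
so the strategy is to (i) control $\|\mathbf{\Omega} x\|_2$ for each fixed unit vector $x$ using the sub-Gaussian hypothesis, (ii) discretize $S^{J-1}$ by a finite $\varepsilon$-net $\mathcal{N}_\varepsilon$ and take a union bound, and (iii) upgrade the supremum over $\mathcal{N}_\varepsilon$ to the supremum over $S^{J-1}$ by a standard approximation lemma.

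First I would fix $x\in S^{J-1}$ and study $\mathbf{\Omega} x\in\mathbb{R}^I$. Since the entries of $\mathbf{\Omega}$ are centered sub-Gaussian with parameter $b$ (Definition~4.1), each component $(\mathbf{\Omega} x)_i=\sum_{j=1}^J \Omega_{ij} x_j$ is a linear combination of independent sub-Gaussians and hence is itself sub-Gaussian with parameter at most $b\|x\|_2 = b$. Then $\|\mathbf{\Omega} x\|_2^2$ is a sum of squares of sub-Gaussian (hence sub-exponential) random variables, and a Bernstein-type tail bound gives, for a threshold $t=a\sqrt{J}$ with $a$ sufficiently large relative to $b\tau$,
\begin{equation*}
\mathbf{P}\bigl(\|\mathbf{\Omega} x\|_2 \geq \tfrac{a}{2}\sqrt{J}\bigr) \;\leq\; \exp(-c a^2 J)
\end{equation*}
for an absolute constant $c>0$.

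Next I would take an $\varepsilon$-net $\mathcal{N}_\varepsilon\subset S^{J-1}$ with $\varepsilon=1/2$, say, and the standard volumetric bound $|\mathcal{N}_{1/2}|\leq 5^J$. A union bound over $\mathcal{N}_{1/2}$ multiplies the failure probability by at most $5^J=\exp(J\log 5)$, so choosing $a$ of the form $6\tau\sqrt{a'+4}$ absorbs both the $\log 5$ factor and an additional slack to accommodate the net-to-sphere passage. The passage itself is the elementary lemma $\|\mathbf{\Omega}\|_2 \leq (1-\varepsilon)^{-1}\sup_{x\in\mathcal{N}_\varepsilon}\|\mathbf{\Omega} x\|_2$, which with $\varepsilon=1/2$ costs a factor of $2$. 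Combining these, we arrive at
\begin{equation*}
\mathbf{P}\bigl(\|\mathbf{\Omega}\|_2 > a\sqrt{J}\bigr) \;\leq\; 5^J \cdot \exp(-c a^2 J) \;\leq\; \exp(-a'J)
\end{equation*}
provided $a=6\tau\sqrt{a'+4}$ is chosen to make the exponent negative.

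The main obstacle, and the reason this result is customarily stated with concrete numerical constants (the $6$, the $+4$) rather than symbolic ones, is tracking the constants through the sub-exponential concentration for $\|\mathbf{\Omega} x\|_2^2$ and then reconciling them with the $\log 5$ cost of the net and the factor $2$ from the net-to-sphere passage, so that the final exponent is at most $-a'J$ uniformly in $\tau\geq 1$ and $a'>0$. In a fully rigorous write-up I would therefore carry out the Bernstein step explicitly with the sharp sub-Gaussian moment generating function bound from Definition~4.1, and then verify by direct substitution that $a=6\tau\sqrt{a'+4}$ is large enough; alternatively, since the bound is exactly the one established in Litvak et al., I would cite that paper for the constant-tracking and restrict the present proof to the structural outline above.
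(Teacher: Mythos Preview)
The paper does not prove this theorem at all; it is quoted verbatim as a known result from Litvak et al.\ \cite{litvak2005smallest} and used as a black box in the subsequent analysis. Your $\varepsilon$-net plus sub-Gaussian concentration outline is the standard route to such operator-norm tail bounds and is indeed how the cited reference proceeds, so your proposal is consistent with the original source even though there is nothing in the present paper to compare it against.
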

We now introduce the lower bound of the smallest singular value of a random sub-Gaussian matrix.

\begin{theorem}{{\bf (see \cite{litvak2005smallest})}}
	\label{RTT:thm5}
	Let $\tau\geq1$, $a>0$ and $a'>0$. Let $\mathbf{\Omega}\in\mathbb{R}^{I\times J}$ with $J>(1+1/\ln(I))I$. Suppose that the entries of $\mathbf{A}$ are centered independent random variables such that {\rm (a)} moments: $\mathbf{E}(|a_{ij}|^3)\leq \tau^3$; {\rm (b)} norm: $\mathbf{P}(\|\mathbf{\Omega}\|_2>a\sqrt{J})\leq \exp(-a'J)$; {\rm (c)} variance: $\mathbf{E}(|a_{ij}|^2)\leq 1$. Then, there exist positive constants $b$ and $b'$ such that
	\begin{equation*}
	\mathbf{P}(\sigma_I(\mathbf{\Omega})\leq b\sqrt{J})\leq \exp(-b'J).
	\end{equation*}
\end{theorem}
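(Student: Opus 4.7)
The plan is to prove this via a standard $\varepsilon$-net argument on the sphere $S^{I-1}\subset\mathbb{R}^I$, combined with a small-ball (anti-concentration) estimate for $\mathbf{\Omega}^\top\mathbf{x}$ at fixed unit vectors. Observe that $\sigma_I(\mathbf{\Omega}) = \min_{\mathbf{x}\in S^{I-1}}\|\mathbf{\Omega}^\top\mathbf{x}\|_2$, so it suffices to produce a uniform lower bound of order $\sqrt{J}$ over the sphere, where the gap $J>(1+1/\ln(I))I$ is what buys us room to beat the $\varepsilon$-net's exponential cardinality.

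First I would prove a pointwise small-ball bound: for any fixed $\mathbf{x}\in S^{I-1}$, there exist constants $c_0,c_1>0$ (depending only on $\tau$) such that
\begin{equation*}
\mathbf{P}\bigl(\|\mathbf{\Omega}^\top\mathbf{x}\|_2 \leq c_0\sqrt{J}\bigr) \leq \exp(-c_1 J).
\end{equation*}
The coordinates of $\mathbf{\Omega}^\top\mathbf{x}$ are i.i.d.\ centered random variables with variance $\leq 1$ and bounded third moments (by $\tau^3$, via Minkowski and hypothesis (a)), so a Paley--Zygmund / Berry--Esseen-type argument yields a positive lower bound for $\mathbf{P}(|(\mathbf{\Omega}^\top\mathbf{x})_j|\geq c_0)$ that is bounded away from $0$. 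A tensorization/Chernoff step over the $J$ independent coordinates then produces the exponential decay $\exp(-c_1 J)$.

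Next I would fix a small parameter $\varepsilon\in(0,1)$ and take an $\varepsilon$-net $\mathcal{N}_\varepsilon\subset S^{I-1}$ of cardinality $|\mathcal{N}_\varepsilon|\leq (3/\varepsilon)^I$. A union bound gives
\begin{equation*}
\mathbf{P}\Bigl(\min_{\mathbf{x}\in\mathcal{N}_\varepsilon}\|\mathbf{\Omega}^\top\mathbf{x}\|_2 \leq c_0\sqrt{J}\Bigr) \leq (3/\varepsilon)^I \exp(-c_1 J) = \exp\bigl(I\ln(3/\varepsilon) - c_1 J\bigr).
\end{equation*}
The hypothesis $J>(1+1/\ln(I))I$ ensures that for $\varepsilon$ chosen sufficiently small (but independent of $I,J$), the exponent is $\leq -b_1 J$ for some $b_1>0$. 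To transfer the bound from $\mathcal{N}_\varepsilon$ to all of $S^{I-1}$ I would use the standard approximation inequality $\sigma_I(\mathbf{\Omega}) \geq \min_{\mathbf{x}\in\mathcal{N}_\varepsilon}\|\mathbf{\Omega}^\top\mathbf{x}\|_2 - \varepsilon\|\mathbf{\Omega}\|_2$. On the event provided by hypothesis (b), $\|\mathbf{\Omega}\|_2 \leq a\sqrt{J}$, so choosing $\varepsilon$ small enough that $c_0 - \varepsilon a \geq b > 0$ yields $\sigma_I(\mathbf{\Omega}) \geq b\sqrt{J}$ off a bad event of probability at most $\exp(-b_1 J)+\exp(-a'J)\leq \exp(-b'J)$.

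The main obstacle is the quantitative small-ball estimate: ensuring the constant $c_1$ in the pointwise bound is large enough relative to the $\ln(3/\varepsilon)$ factor forced by the $\varepsilon$-net, given only a third-moment hypothesis rather than stronger sub-Gaussian tails on the entries. This is where the condition $J>(1+1/\ln(I))I$ is essential: it yields an extra factor of $J/I\geq 1+1/\ln(I)$ in the exponent, which must absorb $\ln(3/\varepsilon)$ after $\varepsilon$ is fixed from the norm-approximation step. Carefully coupling the choices of $\varepsilon$, $c_0$, and the resulting $b$ so that all three inequalities (small-ball, net-covering, norm-approximation) hold simultaneously with a single pair of final constants $(b,b')$ is the delicate bookkeeping that drives the proof.
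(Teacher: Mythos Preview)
The paper does not prove this theorem; it is quoted verbatim from Litvak, Pajor, Rudelson, and Tomczak-Jaegermann (2005) and used as a black box. Your proposal is precisely the strategy of that original reference: an $\varepsilon$-net on $S^{I-1}$, a pointwise small-ball estimate for $\|\mathbf{\Omega}^\top\mathbf{x}\|_2$ obtained via Paley--Zygmund and tensorization over the $J$ independent columns, and a union bound closed off by the operator-norm hypothesis (b). So in spirit you are reproducing the cited proof, not an alternative to anything in the present paper.

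One slip worth flagging: you write that the exponent $I\ln(3/\varepsilon)-c_1J$ becomes $\leq -b_1J$ ``for $\varepsilon$ chosen sufficiently small.'' This is backwards---shrinking $\varepsilon$ \emph{enlarges} $\ln(3/\varepsilon)$ and hence the net, making the union bound worse, not better. The real constraint is two-sided: $\varepsilon$ must be small enough that $c_0-\varepsilon a>0$ (so the norm-approximation step yields a positive $b$) yet large enough that $I\ln(3/\varepsilon)<c_1J$. The gap $J>(1+1/\ln I)I$ is what makes this window nonempty, but the argument does not proceed by sending $\varepsilon\to 0$. You correctly identify this tension in your final paragraph, so the confusion is local rather than structural. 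A second, minor point: hypothesis (c) as stated gives only an \emph{upper} bound on the variance, which by itself cannot yield a lower bound on $\sigma_I$ (take all entries identically zero); the original Litvak et al.\ statement requires $\mathbf{E}|a_{ij}|^2\geq 1$, and your Paley--Zygmund step implicitly uses such a lower bound, so be aware you are silently repairing a typo in the quoted hypothesis.
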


Following the work in \cite{che2020the,rokhlin2010randomized,shabat2016randomized}, we have the following lemma to illustrate that the product $\mathbf{Q}_n\mathbf{Q}_n^\top\mathbf{A}_{n}$
is a good approximation to $\mathbf{A}_n\in\mathbb{R}^{\mu_{n-1}I_n\times I_{n+1}\dots I_N}$, provided that there exist matrices $\mathbf{\Omega}_n\in\mathbb{R}^{I_{n+1}\dots I_N\times (\mu_n+R)}$ and $\mathbf{R}_n\in\mathbb{R}^{\mu_n\times (\mu_n+R)}$ such that: (a) $\mathbf{Q}_n\in\mathbb{R}^{\mu_{n-1}I_n\times \mu_n}$ is orthonormal;
(b) $\mathbf{Q}_n\mathbf{R}_n$ is a good approximation to $(\mathbf{A}_n\mathbf{A}_n^\top)^q\mathbf{A}_n\mathbf{\Omega}_n$, and
(c) there exists a matrix $\mathbf{F}_n\in\mathbb{R}^{(\mu_n+R)\times I_{n+1}\dots I_N}$ such that $\|\mathbf{F}\|_2$ is medium, and $(\mathbf{A}_n\mathbf{A}_n^\top)^q\mathbf{A}_n\mathbf{\Omega}_n\mathbf{F}_n$ is a good approximation to $\mathbf{A}_n$.

\begin{lemma}
\label{RTT-lem1}
For a given $n$, suppose that $\mu_{n-1}$, $\mu_n$, $R$, and $I_m \ (m=n,n+1,\dots, N)$ are positive integers with $\mu_n< (\mu_n+R)< \min\{\mu_{n-1}I_n,I_{n+1}\dots I_N\}$. Suppose further that $\mathbf{A}_n$ is a real $\mu_{n-1}I_n\times I_{n+1}\dots I_N$ matrix, $\mathbf{Q}_n$ is a real $\mu_{n-1}I_n\times \mu_n$ matrix whose columns are orthonormal, $\mathbf{R}_n$ is a real $\mu_n\times (\mu_n+R)$ upper triangular matrix, $\mathbf{F}_n$ is a real $(\mu_n+R)\times I_{n+1}\dots I_N$ matrix and $\mathbf{\Omega}_n$ is a real $I_{n+1}\dots I_N\times (\mu_n+R)$ matrix. Then we have
\begin{equation*}
\begin{split}
\|\mathbf{A}_n-\mathbf{Q}_n\mathbf{Q}_n^\top\mathbf{A}_n\|_F^2&=2\|\mathbf{B}_n\mathbf{\Omega}_n\mathbf{F}_n-\mathbf{A}_n\|_F^2+2\|\mathbf{F}_n\|_2^2\|\mathbf{Q}_n\mathbf{R}_n-\mathbf{B}_n\mathbf{\Omega}_n\|_F^2,
\end{split}
\end{equation*}
with $\mathbf{B}_n=(\mathbf{A}_n\mathbf{A}_n^\top)^q\mathbf{A}_n$, where $q\geq 0$ is a given positive integer.
\end{lemma}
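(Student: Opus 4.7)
The plan is to arrive at the claimed right-hand side by splitting the residual $(\mathbf{I}_{\mu_{n-1}I_n}-\mathbf{Q}_n\mathbf{Q}_n^\top)\mathbf{A}_n$ around the intermediate matrix $\mathbf{B}_n\mathbf{\Omega}_n\mathbf{F}_n$, exploiting that $\mathbf{I}-\mathbf{Q}_n\mathbf{Q}_n^\top$ is an orthogonal projector (so has spectral norm at most one) and that $(\mathbf{I}-\mathbf{Q}_n\mathbf{Q}_n^\top)\mathbf{Q}_n=\mathbf{0}$. First I would add and subtract $\mathbf{B}_n\mathbf{\Omega}_n\mathbf{F}_n$ to obtain
\[
\mathbf{A}_n-\mathbf{Q}_n\mathbf{Q}_n^\top\mathbf{A}_n=(\mathbf{I}-\mathbf{Q}_n\mathbf{Q}_n^\top)(\mathbf{A}_n-\mathbf{B}_n\mathbf{\Omega}_n\mathbf{F}_n)+(\mathbf{I}-\mathbf{Q}_n\mathbf{Q}_n^\top)\mathbf{B}_n\mathbf{\Omega}_n\mathbf{F}_n,
\]
then, inserting $\pm\,\mathbf{Q}_n\mathbf{R}_n\mathbf{F}_n$ and using the projector identity $(\mathbf{I}-\mathbf{Q}_n\mathbf{Q}_n^\top)\mathbf{Q}_n\mathbf{R}_n\mathbf{F}_n=\mathbf{0}$, I would rewrite the second summand as $(\mathbf{I}-\mathbf{Q}_n\mathbf{Q}_n^\top)(\mathbf{B}_n\mathbf{\Omega}_n-\mathbf{Q}_n\mathbf{R}_n)\mathbf{F}_n$. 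This isolates exactly the two quantities $\mathbf{A}_n-\mathbf{B}_n\mathbf{\Omega}_n\mathbf{F}_n$ and $\mathbf{Q}_n\mathbf{R}_n-\mathbf{B}_n\mathbf{\Omega}_n$ appearing on the right-hand side.

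Next I would bound each summand in Frobenius norm. For the first, $\|\mathbf{I}-\mathbf{Q}_n\mathbf{Q}_n^\top\|_2\le 1$ gives the Frobenius estimate $\|\mathbf{A}_n-\mathbf{B}_n\mathbf{\Omega}_n\mathbf{F}_n\|_F$. For the second, combining the same projector bound with the submultiplicative inequality $\|XY\|_F\le\|X\|_F\|Y\|_2$ (applied with $Y=\mathbf{F}_n$) produces the Frobenius estimate $\|\mathbf{F}_n\|_2\cdot\|\mathbf{Q}_n\mathbf{R}_n-\mathbf{B}_n\mathbf{\Omega}_n\|_F$. Squaring the sum and applying $(a+b)^2\le 2a^2+2b^2$ then yields a right-hand side that matches, term by term, the one in the statement.

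The main obstacle is the ``$=$'' sign in the stated display: each of the three bounds I use is in general strict. The step $(a+b)^2\le 2a^2+2b^2$ is an equality only when $a=b$; $\|\mathbf{I}-\mathbf{Q}_n\mathbf{Q}_n^\top\|_2\le 1$ is saturated only on the range of the projector; and $\|XY\|_F\le\|X\|_F\|Y\|_2$ is saturated only when $Y$ acts isometrically on the row space of $X$. To upgrade all three ``$\le$''s to ``$=$''s simultaneously one would have to analyze the cross term
\[
\bigl\langle(\mathbf{I}-\mathbf{Q}_n\mathbf{Q}_n^\top)(\mathbf{A}_n-\mathbf{B}_n\mathbf{\Omega}_n\mathbf{F}_n),\;(\mathbf{I}-\mathbf{Q}_n\mathbf{Q}_n^\top)(\mathbf{B}_n\mathbf{\Omega}_n-\mathbf{Q}_n\mathbf{R}_n)\mathbf{F}_n\bigr\rangle
\]
in the Frobenius inner product and verify a matching identity tying $\mathbf{F}_n$ to $\mathbf{B}_n\mathbf{\Omega}_n-\mathbf{Q}_n\mathbf{R}_n$. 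My expectation is that no such extra structure is available for generic $\mathbf{Q}_n,\mathbf{R}_n,\mathbf{F}_n,\mathbf{\Omega}_n$ satisfying only the dimension assumptions of the lemma, so the argument naturally produces the displayed expression with ``$\le$'' rather than ``$=$''; the real work, if the equality is intended literally, would be to locate the missing hypothesis (or a refined splitting) that forces this cross term to take the precise value needed.
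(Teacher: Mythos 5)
Your argument is precisely the standard splitting that the paper's (omitted) proof delegates to its references --- Lemma 4.5 of \cite{che2020the}, Lemma 3.1 of \cite{rokhlin2010randomized}, Lemma 4.4 of \cite{shabat2016randomized} --- namely decomposing the residual around $\mathbf{B}_n\mathbf{\Omega}_n\mathbf{F}_n$, killing the $\mathbf{Q}_n\mathbf{R}_n\mathbf{F}_n$ term with the projector identity, and finishing with $\|\mathbf{X}\mathbf{Y}\|_F\le\|\mathbf{X}\|_F\|\mathbf{Y}\|_2$ and $(a+b)^2\le 2a^2+2b^2$, so it matches the intended proof. You are also right that the displayed ``$=$'' must be ``$\le$'': the claim is universally quantified over $\mathbf{F}_n$, and taking $\mathbf{F}_n=\mathbf{0}$ makes the right-hand side $2\|\mathbf{A}_n\|_F^2$, which strictly exceeds the left-hand side whenever $\mathbf{A}_n\neq\mathbf{0}$; the cited lemmas all state an inequality, and the paper itself only ever uses this result as an upper bound (e.g., in deriving (\ref{RTT-eqn2}) in the proof of Theorem \ref{RTT:thm13}).
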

\begin{proof}
Note that the proof is similar to that for Lemma 4.5 in \cite{che2020the}, Lemma 3.1 in \cite{rokhlin2010randomized} and Lemma 4.4 in \cite{shabat2016randomized}. Hence, we omit the detail process.
\end{proof}

The upper bound for $\|\mathbf{Q}_n\mathbf{R}_n-\mathbf{B}_n\mathbf{\Omega}_n\|_F^2$ is given in following lemma, which is from Theorem 5.2 in \cite{che2021randomized} or Theorem 4.10 in \cite{che2020the}.
\begin{lemma}
\label{RTT-lem2}
    Let the assumptions be the same as in Lemma \ref{RTT-lem1}. Suppose that $\mathbf{\Omega}_n\in\mathbb{R}^{I_{n+1}\dots I_N\times (\mu_n+R)}$ is a standard Gaussian matrix. We define $a_n$ and $a_n'$ as in Theorem \ref{RTT:thm4}. Then, the inequality
    \begin{equation*}
    \|\mathbf{Q}_n\mathbf{R}_n-\mathbf{B}_n\mathbf{\Omega}_n\|_F\leq a_n\sqrt{I_{n+1}\dots I_N}\Delta_{\mu_n+1}(\mathbf{A}_n)^{2q+1}
    \end{equation*}
    holds with probability at least $1-\exp(-a_n'I_{n+1}\dots I_N)$, where
    $a_n=6\tau\sqrt{a_n'+1}$, $\tau\geq1$ and
    \begin{equation*}
    \Delta_{\mu_n+1}(\mathbf{A}_n)=\left(\sum_{k=\mu_n+1}^{\min\{\mu_{n-1}I_n,I_{n+1}\dots I_N\}}\sigma_k(\mathbf{A}_n)^2\right)^{1/2}
    \end{equation*}
\end{lemma}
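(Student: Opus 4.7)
The plan is to reduce $\|\mathbf{Q}_n\mathbf{R}_n - \mathbf{B}_n\mathbf{\Omega}_n\|_F$ to a deterministic singular-value tail of $\mathbf{A}_n$, and then to invoke Theorem \ref{RTT:thm4} to control the Gaussian factor. First, I would interpret $\mathbf{Q}_n$ and $\mathbf{R}_n$ in the sense of line 6 of Algorithm \ref{RTT:alg3}, so that $\mathbf{Q}_n\mathbf{R}_n$ is the best rank-$\mu_n$ approximation to $\mathbf{Z}_n=\mathbf{B}_n\mathbf{\Omega}_n$. By the Eckart--Young theorem,
\begin{equation*}
    \|\mathbf{Q}_n\mathbf{R}_n - \mathbf{B}_n\mathbf{\Omega}_n\|_F^2 = \sum_{k=\mu_n+1}^{\min\{\mu_{n-1}I_n,\,\mu_n+R\}} \sigma_k(\mathbf{B}_n\mathbf{\Omega}_n)^2.
\end{equation*}

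Second, writing the SVD $\mathbf{A}_n=\mathbf{U}\mathbf{\Sigma}\mathbf{V}^\top$ gives $\mathbf{B}_n=\mathbf{U}\mathbf{\Sigma}^{2q+1}\mathbf{V}^\top$ and hence $\sigma_k(\mathbf{B}_n)=\sigma_k(\mathbf{A}_n)^{2q+1}$. Combining the multiplicative singular-value inequality $\sigma_k(\mathbf{B}_n\mathbf{\Omega}_n)\leq\sigma_k(\mathbf{B}_n)\|\mathbf{\Omega}_n\|_2$ with the elementary estimate $\sum_k x_k^{p}\leq\bigl(\sum_k x_k\bigr)^p$ valid for $p\geq 1$ and $x_k\geq 0$ (applied with $p=2q+1$ and $x_k=\sigma_k(\mathbf{A}_n)^2$) yields
\begin{equation*}
    \|\mathbf{Q}_n\mathbf{R}_n - \mathbf{B}_n\mathbf{\Omega}_n\|_F \leq \|\mathbf{\Omega}_n\|_2\,\Delta_{\mu_n+1}(\mathbf{A}_n)^{2q+1}.
\end{equation*}

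Finally, since a standard Gaussian matrix is in particular sub-Gaussian, Theorem \ref{RTT:thm4} applied to $\mathbf{\Omega}_n\in\mathbb{R}^{I_{n+1}\dots I_N\times(\mu_n+R)}$ (with $I_{n+1}\dots I_N$ playing the role of the larger dimension $J$) guarantees that $\|\mathbf{\Omega}_n\|_2\leq a_n\sqrt{I_{n+1}\dots I_N}$ outside an event of probability at most $\exp(-a_n'I_{n+1}\dots I_N)$, which combined with the previous display gives the claim.

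The main obstacle is conceptual rather than technical: Lemma \ref{RTT-lem1} only requires $\mathbf{Q}_n$ to be orthonormal and $\mathbf{R}_n$ to be upper triangular, so the delicate step is to pin down where the optimality $\|\mathbf{Q}_n\mathbf{R}_n-\mathbf{Z}_n\|_F=\Delta_{\mu_n+1}(\mathbf{Z}_n)$ enters. This is supplied by the algorithm's specific choice of $\mathbf{Q}_n$ as the leading $\mu_n$ left singular vectors of $\mathbf{Z}_n$ together with $\mathbf{R}_n=\mathbf{Q}_n^\top\mathbf{Z}_n$; once this interpretation is fixed, the remaining steps are routine applications of singular-value inequalities and the sub-Gaussian spectral-norm bound.
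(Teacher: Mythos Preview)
Your proposal is correct and follows essentially the same route as the paper's proof: both start from the best rank-$\mu_n$ approximation identity $\|\mathbf{Q}_n\mathbf{R}_n-\mathbf{B}_n\mathbf{\Omega}_n\|_F=\Delta_{\mu_n+1}(\mathbf{B}_n\mathbf{\Omega}_n)$, push the Gaussian factor out via $\sigma_k(\mathbf{B}_n\mathbf{\Omega}_n)\leq\|\mathbf{\Omega}_n\|_2\,\sigma_k(\mathbf{B}_n)$, bound $\|\mathbf{\Omega}_n\|_2$ by Theorem~\ref{RTT:thm4}, and finish with $\Delta_{\mu_n+1}(\mathbf{B}_n)\leq\Delta_{\mu_n+1}(\mathbf{A}_n)^{2q+1}$. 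The only cosmetic differences are that the paper outsources the first two steps to cited lemmas from \cite{che2020the}, whereas you spell them out directly (Eckart--Young plus the multiplicative singular-value inequality) and give an explicit power-sum justification for the last inequality; your handling of the ``conceptual obstacle'' about where optimality enters is also exactly right.
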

\begin{proof}
Following Lemma 4.4 in \cite{che2020the}, we have
\begin{equation*}
\|\mathbf{Q}_n\mathbf{R}_n-\mathbf{B}_n\mathbf{\Omega}_n\|_F^2\leq \sum_{i=\mu_n+1}^{\min\{\mu_{n-1}I_n,\mu_n+R\}}\sigma_i(\mathbf{B}_n\mathbf{\Omega}_n)^2.
\end{equation*}
By combining the above inequality with Lemma 4.3 in \cite{che2020the}, we have
\begin{equation*}
\|\mathbf{Q}_n\mathbf{R}_n-\mathbf{B}_n\mathbf{\Omega}_n\|_F^2\leq \|\mathbf{\Omega}_n\|_2^2\sum_{i=\mu_n+1}^{\min\{\mu_{n-1}I_n,I_{n+1}\dots I_N\}}\sigma_i(\mathbf{B}_n)^2,
\end{equation*}
which implies that
\begin{equation*}
\|\mathbf{Q}_n\mathbf{R}_n-\mathbf{B}_n\mathbf{\Omega}_n\|_F\leq a_n\sqrt{I_{n+1}\dots I_N}\Delta_{\mu_n+1}(\mathbf{B}_n)
\end{equation*}
holds with probability at least $1-\exp(-a_n'\mu_{n-1}I_n)$. Note that $\Delta_{\mu_n+1}(\mathbf{B}_n)\leq \Delta_{\mu_n+1}(\mathbf{A}_n)^{2q+1}$. Then the proof is completed.
\end{proof}
For the case of $\mathbf{\Omega}_n$ being any standard Gaussian matrix, we now give the upper bounds for $\|\mathbf{B}_n\mathbf{\Omega}_n\mathbf{F}_n-\mathbf{A}_n\|_F$ and $\|\mathbf{F}_n\|_2$, which is provided by Theorem 4.8 in \cite{che2020the}.
\begin{theorem}
\label{RTT-thm1}
Let the assumptions be the same as in Lemma \ref{RTT-lem1}. Suppose that $\mathbf{\Omega}_n\in\mathbb{R}^{I_{n+1}\dots I_N\times (\mu_n+R)}$ is a standard Gaussian matrix with $\mu_n+R>(1+1/\ln(\mu_n))\mu_n$ and $\mu_n+R\leq \min\{\mu_{n-1}I_n,I_{n+1}\dots I_N\}$. We define $a_n$, $a_n'$, $b_n$ and $b_n'$ as in Theorems \ref{RTT:thm4} and \ref{RTT:thm5}. Then, there exists a matrix $\mathbf{F}_n\in\mathbb{R}^{(\mu_n+R)\times I_{n+1}\dots I_N}$ such that
\begin{equation*}
\begin{split}
    &\|\mathbf{B}_n\mathbf{\Omega}_n\mathbf{F}_n-\mathbf{A}_n\|_F\leq \Delta_{\mu_n+1}(\mathbf{A}_n)+\frac{a_n\sqrt{I_{n+1}\dots I_N}}{b_n\sigma_{\mu_n}(\mathbf{A}_n)^{2q}\sqrt{\mu_n+R}}\Delta_{\mu_n+1}(\mathbf{A}_n)^{2q+1}
\end{split}
\end{equation*}
and
\begin{equation*}
\|\mathbf{F}_n\|_2\leq \frac{1}{b_n\sigma_{\mu_n}(\mathbf{A}_n)^{2q}\sqrt{\mu_n+R}}
\end{equation*}
with probability at least $$1-\exp(-a_n'I_{n+1}\dots I_n)-\exp(-b_n'(\mu_n+R)).$$
\end{theorem}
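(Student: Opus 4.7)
The plan is to follow the template of Theorem 4.8 in \cite{che2020the}, adapted to the current notation. The central idea is to exhibit an explicit candidate for $\mathbf{F}_n$, built from the truncated SVD of $\mathbf{A}_n$ together with a pseudoinverse of the projected Gaussian, and then control the two sources of error separately.

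Concretely, I would start with a thin SVD $\mathbf{A}_n = \mathbf{U}_n\mathbf{\Sigma}_n\mathbf{V}_n^\top$, so that $\mathbf{B}_n = (\mathbf{A}_n\mathbf{A}_n^\top)^q\mathbf{A}_n = \mathbf{U}_n\mathbf{\Sigma}_n^{2q+1}\mathbf{V}_n^\top$. Partition $\mathbf{\Sigma}_n = \mathrm{diag}(\mathbf{\Sigma}_{n,1},\mathbf{\Sigma}_{n,2})$ with the top $\mu_n$ singular values in $\mathbf{\Sigma}_{n,1}$, and split $\mathbf{V}_n = [\mathbf{V}_{n,1},\mathbf{V}_{n,2}]$ accordingly. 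Define the two blocks of the sketch
\begin{equation*}
\mathbf{\Omega}_{n,1} = \mathbf{V}_{n,1}^\top\mathbf{\Omega}_n \in \mathbb{R}^{\mu_n\times(\mu_n+R)}, \qquad \mathbf{\Omega}_{n,2} = \mathbf{V}_{n,2}^\top\mathbf{\Omega}_n,
\end{equation*}
which are independent standard Gaussian matrices by rotational invariance. The natural candidate is
\begin{equation*}
\mathbf{F}_n = \mathbf{\Omega}_{n,1}^{\dagger}\,\mathbf{\Sigma}_{n,1}^{-2q}\,\mathbf{V}_{n,1}^\top,
\end{equation*}
which is well defined because the hypothesis $\mu_n+R > (1+1/\ln\mu_n)\mu_n$ brings us into the regime of Theorem \ref{RTT:thm5}, ensuring $\mathbf{\Omega}_{n,1}$ has full row rank with high probability. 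A direct block computation using $\mathbf{\Omega}_{n,1}\mathbf{\Omega}_{n,1}^{\dagger}=\mathbf{I}_{\mu_n}$ then gives
\begin{equation*}
\mathbf{A}_n - \mathbf{B}_n\mathbf{\Omega}_n\mathbf{F}_n = \mathbf{U}_{n,2}\bigl(\mathbf{\Sigma}_{n,2}\mathbf{V}_{n,2}^\top - \mathbf{\Sigma}_{n,2}^{2q+1}\mathbf{\Omega}_{n,2}\mathbf{\Omega}_{n,1}^{\dagger}\mathbf{\Sigma}_{n,1}^{-2q}\mathbf{V}_{n,1}^\top\bigr),
\end{equation*}
so the triangle inequality in the Frobenius norm isolates a deterministic tail term $\|\mathbf{\Sigma}_{n,2}\mathbf{V}_{n,2}^\top\|_F=\Delta_{\mu_n+1}(\mathbf{A}_n)$ and a stochastic term.

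The stochastic term is controlled by submultiplicativity $\|\mathbf{X}\mathbf{Y}\|_F \leq \|\mathbf{X}\|_F\|\mathbf{Y}\|_2$, the elementary inequality $\|\mathbf{\Sigma}_{n,2}^{2q+1}\|_F \leq \|\mathbf{\Sigma}_{n,2}\|_F^{2q+1} = \Delta_{\mu_n+1}(\mathbf{A}_n)^{2q+1}$ (itself a consequence of the power-mean inequality), together with $\|\mathbf{\Sigma}_{n,1}^{-2q}\|_2 = \sigma_{\mu_n}(\mathbf{A}_n)^{-2q}$. The two Gaussian factors are handled by the sub-Gaussian bounds already stated: applying Theorem \ref{RTT:thm4} to $\mathbf{\Omega}_n$ (hence to $\mathbf{\Omega}_{n,2}$, whose norm is bounded by $\|\mathbf{\Omega}_n\|_2$) yields $\|\mathbf{\Omega}_{n,2}\|_2 \leq a_n\sqrt{I_{n+1}\cdots I_N}$ off a set of probability $\exp(-a_n'I_{n+1}\cdots I_N)$, and Theorem \ref{RTT:thm5} applied to $\mathbf{\Omega}_{n,1}$ gives $\|\mathbf{\Omega}_{n,1}^{\dagger}\|_2 \leq 1/(b_n\sqrt{\mu_n+R})$ off a set of probability $\exp(-b_n'(\mu_n+R))$. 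Combining these estimates inside the error identity produces the first asserted inequality, while the same two bounds applied to $\mathbf{F}_n = \mathbf{\Omega}_{n,1}^{\dagger}\mathbf{\Sigma}_{n,1}^{-2q}\mathbf{V}_{n,1}^\top$ (using $\|\mathbf{V}_{n,1}\|_2=1$) give the bound on $\|\mathbf{F}_n\|_2$.

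The only nontrivial step is verifying the explicit form of $\mathbf{A}_n - \mathbf{B}_n\mathbf{\Omega}_n\mathbf{F}_n$ and checking that the dimensional hypothesis $\mu_n+R\leq \min\{\mu_{n-1}I_n, I_{n+1}\cdots I_N\}$ indeed lets us partition both sides of the SVD cleanly and invoke Theorem \ref{RTT:thm5} for $\mathbf{\Omega}_{n,1}$; the rest is bookkeeping. The final probability $1-\exp(-a_n'I_{n+1}\cdots I_N)-\exp(-b_n'(\mu_n+R))$ comes from a single union bound over the two high-probability events. Because the structure exactly mirrors the matrix-case argument of \cite{che2020the}, I expect no new obstacle beyond keeping the dimensions of $\mathbf{\Omega}_{n,1}$, $\mathbf{\Omega}_{n,2}$, $\mathbf{V}_{n,1}$, $\mathbf{V}_{n,2}$ and their reshapes consistent with the TT unfoldings of $\mathcal{A}$.
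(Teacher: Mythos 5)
Your argument is correct and is essentially the proof the paper relies on: the paper simply cites Theorem 4.8 of \cite{che2020the} for this statement, and its own proof of the analogous Theorem \ref{RTT-thm3} uses exactly your construction (SVD of $\mathbf{A}_n$, partition of $\mathbf{V}_n^\top\mathbf{\Omega}_n$, $\mathbf{F}_n$ built from the pseudoinverse of the top block, the same error identity, and Theorems \ref{RTT:thm4}--\ref{RTT:thm5} with a union bound). Your inclusion of the $\mathbf{\Sigma}_{n,1}^{-2q}$ factor in $\mathbf{F}_n$ is the right adaptation for $q>0$ and reproduces the $\sigma_{\mu_n}(\mathbf{A}_n)^{-2q}$ terms in the stated bounds.
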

\begin{remark}

As shown in \cite{hochstenbach2010subspace}, we have
\begin{equation*}
\begin{split}
&\sum_{i=\mu_n+1}^{\min(\mu_{n-1} I_n,I_{n+1}\dots I_N)}\sigma_i(\mathbf{A}_{n})^2\leq \sum_{i=\mu_n+1}^{\min(I_1\dots I_n,I_{n+1}\dots I_N)}\sigma_i(\mathbf{A}_{([n])})^2.
\end{split}
\end{equation*}
However, the lower bound for $\sigma_{\mu_n}(\mathbf{A}_n)$ is unclear for any target tensor $\mathcal{A}\in\mathbb{R}^{I_1\times I_2\times \dots\times I_N}$. Hence, in the following, we only consider the theoretical results for Algorithm \ref{RTT:alg3} under the case of $q=0$.
\end{remark}
Hence, for the case of $q=0$, when $\{\mathbf{Q}_1,\mathbf{Q}_2,\dots,\mathbf{Q}_N\}$ is obtained from Algorithm {\rm \ref{RTT:alg3}} with standard Gaussian matrices, the upper bound for $\|\mathcal{A}-\mathcal{Q}_1\times_{2}^1\mathcal{Q}_2\times_{3}^1\dots\times_{3}^1\mathcal{Q}_N\|_F$ is summarized in the following theorem.
\begin{theorem}
\label{RTT:thm13}
For each $n$, let the assumptions be the same as in Lemma \ref{RTT-lem1}. Suppose that $\mathbf{\Omega}_n\in\mathbb{R}^{I_{n+1}\dots I_N\times (\mu_n+R)}$ is a standard Gaussian matrix with $\mu_n+R>(1+1/\ln(\mu_n))\mu_n$ and $\mu_n+R\leq \min\{\mu_{n-1}I_n,I_{n+1}\dots I_N\}$. We define $a_n$, $a_n'$, $b_n$ and $b_n'$ as in Theorems \ref{RTT:thm4} and \ref{RTT:thm5}. For $\mathcal{A}\in\mathbb{R}^{I_1\times I_2\times\dots\times I_N}$, if all the matrices $\mathbf{Q}_n$ are derived from Algorithm {\rm \ref{RTT:alg3}} with standard Gaussian matrices and $q=0$, then the inequality
\begin{equation*}
\|\mathcal{A}-\mathcal{Q}_1\times_{2}^1\mathcal{Q}_2\times_{3}^1\dots\times_{3}^1\mathcal{Q}_N\|_F\leq
2 \sum\limits_{n=1}^{N-1}\alpha_n
\Delta_{\mu_n+1}(\mathbf{A}_{([n])})
\end{equation*}
holds with the probability at least $$1-\sum_{n=1}^{N-1}(\exp(-b_{n}'(\mu_n+R))+\exp(-a_{n}'I_{n+1}\dots I_N)),$$ where the expression of $\alpha_n$ is given by
\begin{equation*}
\begin{split}
\alpha_n=&1+\sqrt{\frac{a_{n}^2I_{n+1}\dots I_N}{b_{n}^2(\mu_n+R)}}+\sqrt{\frac{a_{n}^2I_{n+1}\dots I_N}{b_{n}^2(\mu_n+R)}}.
\end{split}
\end{equation*}
Here $\Delta_{\mu_n+1}(\mathbf{A}_{([n])})$ is defined by
\begin{equation*}
\Delta_{\mu_n+1}(\mathbf{A}_{([n])})=\sqrt{\sum_{k=\mu_n+1}^{\min(I_1\dots I_n,I_{n+1}\dots I_N)}\sigma_i(\mathbf{A}_{([n])})^2},
\end{equation*}
where $\sigma_i(\mathbf{A}_{([n])})$ is the $i$-th singular value of $\mathbf{A}_{([n])}$.
\end{theorem}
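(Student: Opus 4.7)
The plan is to reduce the global tensor error to $N-1$ matrix error terms via the triangle-inequality decomposition (\ref{RTT:eqnadd1}) already established, and then to control each matrix term $\|\mathbf{A}_n-\mathbf{Q}_n\mathbf{Q}_n^\top\mathbf{A}_n\|_F$ by combining the three ingredients already prepared: Lemma \ref{RTT-lem1} (the algebraic identity splitting the residual into two pieces), Lemma \ref{RTT-lem2} (a high-probability bound on $\|\mathbf{Q}_n\mathbf{R}_n-\mathbf{B}_n\mathbf{\Omega}_n\|_F$), and Theorem \ref{RTT-thm1} (simultaneous high-probability bounds on $\|\mathbf{B}_n\mathbf{\Omega}_n\mathbf{F}_n-\mathbf{A}_n\|_F$ and on $\|\mathbf{F}_n\|_2$). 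Throughout the argument I would specialize to $q=0$, so that $\mathbf{B}_n=\mathbf{A}_n$ and the factors $\sigma_{\mu_n}(\mathbf{A}_n)^{2q}$ appearing in Theorem \ref{RTT-thm1} collapse to one, which is precisely why the theorem only requires the sub-Gaussian singular-value estimates rather than any lower bound on $\sigma_{\mu_n}(\mathbf{A}_n)$.

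Fixing $n$, I would plug the bounds from Lemma \ref{RTT-lem2} and Theorem \ref{RTT-thm1} into the identity of Lemma \ref{RTT-lem1}. On the intersection of the two good events this gives
$$\|\mathbf{A}_n-\mathbf{Q}_n\mathbf{Q}_n^\top\mathbf{A}_n\|_F^2\leq 2\Bigl(1+\tfrac{a_n\sqrt{I_{n+1}\cdots I_N}}{b_n\sqrt{\mu_n+R}}\Bigr)^{\!2}\Delta_{\mu_n+1}(\mathbf{A}_n)^2+2\Bigl(\tfrac{a_n\sqrt{I_{n+1}\cdots I_N}}{b_n\sqrt{\mu_n+R}}\Bigr)^{\!2}\Delta_{\mu_n+1}(\mathbf{A}_n)^2.$$
Taking the square root and applying $\sqrt{u^2+v^2}\leq u+v$ for non-negative $u,v$ produces $\|\mathbf{A}_n-\mathbf{Q}_n\mathbf{Q}_n^\top\mathbf{A}_n\|_F\leq\sqrt{2}\,\alpha_n\,\Delta_{\mu_n+1}(\mathbf{A}_n)$, whereupon the crude estimate $\sqrt{2}\leq 2$ yields the prefactor that appears in the statement.

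The last step is to replace the projected tail $\Delta_{\mu_n+1}(\mathbf{A}_n)$, which depends on the previously-chosen $\mathbf{Q}_1,\dots,\mathbf{Q}_{n-1}$, by the canonical tail $\Delta_{\mu_n+1}(\mathbf{A}_{([n])})$ attached to the original tensor. This is exactly the singular-value tail inequality recalled in the remark immediately preceding the theorem: since $\mathbf{A}_n$ is obtained from $\mathbf{A}_{([n])}$ by left-multiplication by a composition of orthogonal projectors stemming from $\mathbf{Q}_1,\dots,\mathbf{Q}_{n-1}$, the trailing sum of squared singular values can only decrease. Substituting this in, summing over $n=1,\dots,N-1$ in (\ref{RTT:eqnadd1}), and union-bounding the two failure events from Lemma \ref{RTT-lem2} and Theorem \ref{RTT-thm1} over $n$ yields the probability complement $\sum_{n=1}^{N-1}\bigl(\exp(-a_n'I_{n+1}\cdots I_N)+\exp(-b_n'(\mu_n+R))\bigr)$ as claimed.

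The main obstacle is not any single estimate but the bookkeeping at the interface between stages: the Gaussian sketches $\mathbf{\Omega}_n$ are drawn independently, yet $\mathbf{A}_n$ at stage $n$ itself depends on the earlier sketches through $\mathbf{Q}_1,\dots,\mathbf{Q}_{n-1}$. One has to observe that the conclusions of Lemma \ref{RTT-lem2} and Theorem \ref{RTT-thm1} are statements about a \emph{fixed} matrix acted on by a fresh Gaussian sketch, so they remain valid conditionally on the $\sigma$-algebra generated by the earlier stages; combined with the purely deterministic singular-value comparison in the remark, this lets the union bound pass through and delivers the stated probability. The remaining work is purely arithmetic: collecting the three terms $1$, $a_n\sqrt{I_{n+1}\cdots I_N}/(b_n\sqrt{\mu_n+R})$ and its duplicate into $\alpha_n$.
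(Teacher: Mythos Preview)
Your proposal is correct and follows essentially the same route as the paper: combine the triangle-inequality decomposition (\ref{RTT:eqnadd1}) with Lemma~\ref{RTT-lem1} at $q=0$, apply $\sqrt{a+b}\le\sqrt{a}+\sqrt{b}$ (then $\sqrt{2}\le 2$), bound the two pieces via Lemma~\ref{RTT-lem2} and Theorem~\ref{RTT-thm1}, pass from $\Delta_{\mu_n+1}(\mathbf{A}_n)$ to $\Delta_{\mu_n+1}(\mathbf{A}_{([n])})$ using the singular-value tail comparison in the remark, and union-bound over $n$. Your explicit discussion of the conditioning issue---that Lemma~\ref{RTT-lem2} and Theorem~\ref{RTT-thm1} apply conditionally on the earlier stages because $\mathbf{\Omega}_n$ is fresh---is a detail the paper's proof leaves implicit.
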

\begin{proof}
By combining (\ref{RTT:eqnadd1}), Lemma \ref{RTT-lem1} with $q=0$ and the fact that $\sqrt{a+b}\leq \sqrt{a}+\sqrt{b}$ with $a,b\geq 0$, we have
\begin{equation}\label{RTT-eqn2}
    \begin{split}
        \|\mathcal{A}-\mathcal{Q}_1\times_{2}^1\mathcal{Q}_2\times_{3}^1\dots\times_{3}^1\mathcal{Q}_N\|_F
        &\leq2\sum_{n=1}^{N-1}\|\mathbf{A}_n\mathbf{\Omega}_n\mathbf{F}_n-\mathbf{A}_n\|_F\\
        &\quad+2\sum_{n=1}^{N-1}\|\mathbf{F}_n\|_2\|\mathbf{Q}_n\mathbf{R}_n-\mathbf{A}_n\mathbf{\Omega}_n\|_F.
    \end{split}
\end{equation}
Hence, the proof is completed by using Lemma \ref{RTT-lem2} and Theorem \ref{RTT-thm1} with $q=0$ to bound each term in the right-hand side of (\ref{RTT-eqn2}).
\end{proof}
\subsection{The case for Algorithm \ref{RTT:alg3} with the Khatri-Rao product of standard Gaussian matrices}

Similar to the case of Algorithm \ref{RTT:alg3} with standard Gaussian matrices, in this section, we consider the upper bound for $\|\mathcal{A}-\mathcal{Q}_1\times_{2}^1\mathcal{Q}_2\times_{3}^1\dots\times_{3}^1\mathcal{Q}_N\|_F$, where $\{\mathcal{Q}_1,\mathcal{Q}_2,\dots,\mathcal{Q}_N\}$ is obtained from Algorithm \ref{RTT:alg3} with the Khatri-Rao product of standard Gaussian matrices and $q=0$. The following theorem from \cite{vershynin2012introduction} provides bounds on the condition numbers of
matrices whose columns are independent sub-Gaussian isotropic random variables.
\begin{theorem}{{\bf (see \cite[Theorem 5.58]{vershynin2012introduction})}}
\label{RTT-thm2}
Let $\mathbf{A}$ be an $I\times J\ (I\geq J)$ matrix whose columns $\mathbf{\Omega}(:,j)$ are independent sub-Gaussian isotropic random vectors in $\mathbb{R}^I$ with $\|\mathbf{\Omega}(:,j)\|_2=\sqrt{I}$ almost surely (a.s.). Then for every $t\geq0$, with probability at least $1-2\exp(-c t^2)$, one has
\begin{equation*}
\sqrt{I}-C\sqrt{J}-t\leq\sigma_{\min}(\mathbf{\Omega})\leq
\sigma_{\max}(\mathbf{\Omega})\leq\sqrt{I}+C\sqrt{J}+t.
\end{equation*}
Here $C=C_K$ and $c=c_K\geq0$ depend only on the sub-Gaussian norm $K=\max_j\|\mathbf{\Omega}(:,j)\|_{\psi_2}$.
\end{theorem}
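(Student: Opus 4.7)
The plan is to reduce the two-sided singular value bound to controlling the deviation of the Gram matrix $\tfrac{1}{I}\mathbf{\Omega}^\top\mathbf{\Omega}$ from $\mathbf{I}_J$, then to bound this deviation using a net-plus-concentration argument adapted to sub-Gaussian isotropic columns. First I would observe the equivalence
\begin{equation*}
\sqrt{I}-C\sqrt{J}-t\le \sigma_{\min}(\mathbf{\Omega}),\ \ \sigma_{\max}(\mathbf{\Omega})\le \sqrt{I}+C\sqrt{J}+t
\quad\Longleftrightarrow\quad
\bigl\|\tfrac{1}{I}\mathbf{\Omega}^\top\mathbf{\Omega}-\mathbf{I}_J\bigr\|_2\le \max(\delta,\delta^2),
\end{equation*}
with $\delta:=C\sqrt{J/I}+t/\sqrt{I}$, using $\sigma_j(\mathbf{\Omega})^2=I\cdot\lambda_j(\tfrac{1}{I}\mathbf{\Omega}^\top\mathbf{\Omega})$ and the elementary inequality $|s-1|\le\max(|s^2-1|,\sqrt{|s^2-1|})$ for $s\ge 0$. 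After this reduction the target is a single operator-norm estimate.

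The core step is a pointwise concentration estimate. For a fixed unit vector $x\in S^{J-1}$, write $\mathbf{\Omega} x=\sum_{j=1}^J x_j\,\mathbf{\Omega}(:,j)$. Since the columns are independent sub-Gaussian with $\|\mathbf{\Omega}(:,j)\|_{\psi_2}\le K$, the random vector $\mathbf{\Omega} x$ is sub-Gaussian with $\|\mathbf{\Omega} x\|_{\psi_2}\lesssim K\|x\|_2=K$, and by isotropy together with the a.s. condition $\|\mathbf{\Omega}(:,j)\|_2=\sqrt{I}$ we get
\begin{equation*}
\mathbb{E}\|\mathbf{\Omega} x\|_2^2=\sum_{j=1}^J x_j^2\,\mathbb{E}\|\mathbf{\Omega}(:,j)\|_2^2=I.
\end{equation*}
Because $\|\mathbf{\Omega} x\|_2^2$ is a sum of $I$ centered (after shift) sub-exponential random variables $\langle\mathbf{\Omega} x,e_i\rangle^2-\mathbb{E}\langle\mathbf{\Omega} x,e_i\rangle^2$ with sub-exponential norm $\lesssim K^2$, Bernstein's inequality for sub-exponentials yields the two-regime bound
\begin{equation*}
\mathbb{P}\!\left(\bigl|\tfrac{1}{I}\|\mathbf{\Omega} x\|_2^2-1\bigr|\ge s\right)\le 2\exp\!\bigl(-c_1 K^{-4}\min(s^2,s)\,I\bigr),\qquad s\ge 0.
\end{equation*}

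Next I would pass from pointwise to uniform control via a standard $\tfrac{1}{4}$-net $\mathcal{N}\subset S^{J-1}$ of cardinality $|\mathcal{N}|\le 9^J$, using the approximation lemma $\|\mathbf{A}\|_2\le 2\sup_{x\in\mathcal{N}}|x^\top\mathbf{A} x|$ for symmetric $\mathbf{A}=\tfrac{1}{I}\mathbf{\Omega}^\top\mathbf{\Omega}-\mathbf{I}_J$. A union bound over $\mathcal{N}$ combined with the Bernstein estimate gives
\begin{equation*}
\mathbb{P}\!\left(\bigl\|\tfrac{1}{I}\mathbf{\Omega}^\top\mathbf{\Omega}-\mathbf{I}_J\bigr\|_2\ge \max(\delta,\delta^2)\right)\le 2\cdot 9^J\exp\!\bigl(-c_1 K^{-4}\min(\delta^2,\delta)\,I\bigr),
\end{equation*}
which on choosing $\delta=C_K\sqrt{J/I}+t/\sqrt{I}$ absorbs the $9^J$ factor for a sufficiently large $C_K$ and yields failure probability $\le 2\exp(-c_K t^2)$. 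Together with the initial reduction this produces exactly the stated two-sided bound on $\sigma_{\min}(\mathbf{\Omega})$ and $\sigma_{\max}(\mathbf{\Omega})$.

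The main obstacle is the pointwise concentration step: one needs a Hanson--Wright/Bernstein inequality for $\|\mathbf{\Omega} x\|_2^2-I$ that produces both a sub-Gaussian small-deviation regime and a sub-exponential large-deviation regime with constants depending only on $K$. The quadratic nature of $\|\mathbf{\Omega} x\|_2^2$ forces the $\min(\delta^2,\delta)$ structure, and recovering the clean $\sqrt{J/I}$ scaling requires $\delta$ to sit in the sub-Gaussian regime, which in turn controls how large $C_K$ must be so that the $9^J$ net cardinality is defeated. A secondary technical point is verifying that $\mathbf{\Omega} x$ inherits the sub-Gaussian norm bound uniformly in $x\in S^{J-1}$ from independence of the columns; this uses rotation invariance of sub-Gaussian norms under independent sums and is where the constants $C=C_K$ and $c=c_K$ enter the final statement.
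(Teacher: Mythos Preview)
The paper does not prove this theorem; it is quoted verbatim from Vershynin's lecture notes and used as a black box. So there is no ``paper's proof'' to compare against, only the original source.

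Your sketch follows the architecture of Vershynin's Theorem~5.39 (independent sub-Gaussian \emph{rows}), but the statement here is Theorem~5.58 (independent sub-Gaussian \emph{columns}), and the row argument does not transfer. The specific failure is at the Bernstein step: you decompose
\[
\|\mathbf{\Omega} x\|_2^2-I=\sum_{i=1}^I\bigl(\langle\mathbf{\Omega} x,e_i\rangle^2-\mathbb{E}\langle\mathbf{\Omega} x,e_i\rangle^2\bigr)
\]
and invoke Bernstein's inequality. But the summands $\langle\mathbf{\Omega} x,e_i\rangle=\sum_j x_j\,\mathbf{\Omega}(i,j)$ are indexed by rows $i$, and the hypotheses give independence only across columns $j$; the entries within a column need not be independent, so the coordinates of $\mathbf{\Omega} x$ are dependent and Bernstein does not apply. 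For the same reason the standard Hanson--Wright inequality (which requires a random vector with independent sub-Gaussian entries) is not directly available either. A general isotropic sub-Gaussian vector $Z\in\mathbb{R}^I$ does \emph{not} enjoy the thin-shell bound $\mathbb{P}(|\,\|Z\|_2^2-I\,|\ge sI)\le 2\exp(-c\min(s^2,s)I)$ without further structure, so this is a genuine gap, not a cosmetic one.

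Vershynin's actual proof exploits precisely the extra hypothesis you have not yet used: the almost-sure normalization $\|\mathbf{\Omega}(:,j)\|_2=\sqrt{I}$ forces the diagonal of $\tfrac{1}{I}\mathbf{\Omega}^\top\mathbf{\Omega}-\mathbf{I}_J$ to vanish, so for $\|x\|_2=1$,
\[
\tfrac{1}{I}\|\mathbf{\Omega} x\|_2^2-1=\tfrac{1}{I}\sum_{j\neq k}x_jx_k\,\langle \mathbf{\Omega}(:,j),\mathbf{\Omega}(:,k)\rangle,
\]
a purely off-diagonal chaos in the \emph{independent} column vectors. Concentration for this expression is obtained by conditioning on one family of columns and using that $\langle\mathbf{\Omega}(:,j),\mathbf{\Omega}(:,k)\rangle/\sqrt{I}$ is sub-Gaussian in the remaining randomness, combined with a symmetrization/decoupling step; only then does the net argument close with the correct $\sqrt{I}\pm C\sqrt{J}\pm t$ scaling. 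Your reduction to the Gram-matrix deviation and the $\tfrac14$-net step are fine; the missing idea is this off-diagonal treatment in place of the row-wise Bernstein.
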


When the matrix $\mathbf{\Omega}_n$ in Algorithm \ref{RTT:alg3} is given by $\mathbf{\Omega}_n=\mathbf{\Omega}_{n,n+1}\odot\dots\odot\mathbf{\Omega}_{n,N}$, where all the matrices $\mathbf{\Omega}_{n,m}\in\mathbb{R}^{I_m\times (\mu_n+R)}$ with $m=n+1,n+2,\dots,N$ are standard Gaussian matrices, then $\mathbf{\Omega}_n$ is a random matrix whose columns are independent from one another but whose rows are not (cf. \cite{che2019randomized,reynolds2016randomized}).
\begin{lemma}{{\bf (see \cite[Lemma 4.1]{che2019randomized})}}
\label{RTT-lem4}
For a given $n$, suppose that $\mu_n$, $R$, and $I_m \ (m=n+1,\dots, N)$ are positive integers with $\mu_n< (\mu_n+R)< \min\{\mu_{n-1}I_n,I_{n+1}\dots I_N\}$. Let $\mathbf{\Omega}_{n,m}\in\mathbb{R}^{I_m\times (\mu_n+R)}$ be standard Gaussian matrix with $m=n+1,n+2,\dots,N$, and $\mathbf{Q}\in\mathbb{R}^{I_{n+1}\dots I_N\times R_Q}$ be orthonormal with $R_Q<\mu_n+R<I_{n+1}\dots I_N$. Define $\mathbf{\Omega}_n=\mathbf{\Omega}_{n,n+1}\odot\dots\odot\mathbf{\Omega}_{n,N}$. Then $\mathbf{Q}^\top\mathbf{\Omega}_n$ is a random matrix with isotropic columns.
\end{lemma}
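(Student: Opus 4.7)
The plan is to verify the isotropy of each column of $\mathbf{Q}^\top\mathbf{\Omega}_n$ by a direct second-moment computation, leveraging the definition of the Khatri-Rao product and the mixed-product property of Kronecker products. Since the columns of each Gaussian matrix $\mathbf{\Omega}_{n,m}$ are mutually independent across the column index, the $j$-th columns of the various $\mathbf{\Omega}_{n,m}$ (for $m=n+1,\dots,N$) are independent Gaussian vectors, and therefore the resulting Khatri-Rao columns $\{\mathbf{\omega}_j\}_j$ of $\mathbf{\Omega}_n$ are themselves independent. Thus it suffices to check that a single column $\mathbf{\omega}_j$ yields an isotropic column $\mathbf{Q}^\top\mathbf{\omega}_j$.

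First I would recall that the $j$-th column of $\mathbf{\Omega}_n = \mathbf{\Omega}_{n,n+1}\odot\dots\odot\mathbf{\Omega}_{n,N}$ is, by definition,
\begin{equation*}
\mathbf{\omega}_j \;=\; \mathbf{g}_{n+1,j}\otimes \mathbf{g}_{n+2,j}\otimes\dots\otimes \mathbf{g}_{N,j},
\end{equation*}
where $\mathbf{g}_{m,j}:=\mathbf{\Omega}_{n,m}(:,j)\in\mathbb{R}^{I_m}$ is a standard Gaussian vector and the vectors $\mathbf{g}_{n+1,j},\dots,\mathbf{g}_{N,j}$ are mutually independent. Using the mixed-product property $(\mathbf{a}\otimes\mathbf{b})(\mathbf{c}\otimes\mathbf{d})^\top=(\mathbf{a}\mathbf{c}^\top)\otimes(\mathbf{b}\mathbf{d}^\top)$ iteratively, together with independence of the factors and linearity of expectation, I would obtain
\begin{equation*}
\mathbf{E}\bigl[\mathbf{\omega}_j\mathbf{\omega}_j^\top\bigr] \;=\; \bigotimes_{m=n+1}^{N}\mathbf{E}\bigl[\mathbf{g}_{m,j}\mathbf{g}_{m,j}^\top\bigr] \;=\; \bigotimes_{m=n+1}^{N}\mathbf{I}_{I_m} \;=\; \mathbf{I}_{I_{n+1}\dots I_N},
\end{equation*}
so each column of $\mathbf{\Omega}_n$ is isotropic in $\mathbb{R}^{I_{n+1}\dots I_N}$.

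To finish, I would use the orthonormality $\mathbf{Q}^\top\mathbf{Q}=\mathbf{I}_{R_Q}$ to propagate isotropy through the linear map $\mathbf{\omega}_j\mapsto \mathbf{Q}^\top\mathbf{\omega}_j$:
\begin{equation*}
\mathbf{E}\bigl[(\mathbf{Q}^\top\mathbf{\omega}_j)(\mathbf{Q}^\top\mathbf{\omega}_j)^\top\bigr] \;=\; \mathbf{Q}^\top\,\mathbf{E}[\mathbf{\omega}_j\mathbf{\omega}_j^\top]\,\mathbf{Q} \;=\; \mathbf{Q}^\top\mathbf{Q} \;=\; \mathbf{I}_{R_Q}.
\end{equation*}
Combined with the independence of $\{\mathbf{\omega}_j\}_j$, this establishes that $\mathbf{Q}^\top\mathbf{\Omega}_n$ has isotropic columns.

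There is no real technical obstacle; the proof reduces to a Kronecker-product second-moment calculation. The only point requiring care is correct bookkeeping of the mixed-product identity and observing that it is precisely $\mathbf{Q}^\top\mathbf{Q}=\mathbf{I}_{R_Q}$ (orthonormality of columns) that is used, rather than $\mathbf{Q}\mathbf{Q}^\top=\mathbf{I}$, because the linear map applied to $\mathbf{\omega}_j$ is multiplication by $\mathbf{Q}^\top$ from the left.
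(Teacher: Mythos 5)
Your proof is correct. Note that the paper does not prove this lemma at all --- it simply cites Lemma 4.1 of the reference \cite{che2019randomized} --- so there is no in-paper argument to compare against; your direct second-moment computation (Khatri-Rao columns are Kronecker products of independent standard Gaussian vectors, hence $\mathbf{E}[\mathbf{\omega}_j\mathbf{\omega}_j^\top]=\mathbf{I}$ by the mixed-product property, and conjugation by the orthonormal $\mathbf{Q}$ preserves this) is the standard and essentially the only natural route, and it correctly identifies that the needed identity is $\mathbf{Q}^\top\mathbf{Q}=\mathbf{I}_{R_Q}$. Your additional observation that the columns of $\mathbf{\Omega}_n$ are independent across $j$ (each depending only on the $j$-th columns of the factor matrices) is also the right justification for the surrounding claim in the paper that $\mathbf{\Omega}_n$ has independent columns but dependent rows.
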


Note that Lemma \ref{RTT-lem1} is also suitable when the matrix $\mathbf{Q}_n$ is obtained by Algorithm \ref{RTT:alg3} with the Khatri-Rao product of standard Gaussian matrices and $q=0$. We now consider the upper bound for $\|\mathbf{Q}_n\mathbf{R}_n-\mathbf{A}_n\mathbf{\Omega}_n\|_F^2$.
\begin{lemma}
\label{RTT-lem3}
    For a given $n$, suppose that $\mu_n$, $R$, and $I_m \ (m=n+1,\dots, N)$ are positive integers with $\mu_n< (\mu_n+R)< I_{n+1}\dots I_N$. Let $\mathbf{\Omega}_{n,m}\in\mathbb{R}^{I_m\times (\mu_n+R)}$ be standard Gaussian matrix with $m=n+1,n+2,\dots,N$. Define $\mathbf{\Omega}_n=\mathbf{\Omega}_{n,n+1}\odot\dots\odot\mathbf{\Omega}_{n,N}$. Then, for any $t\geq 0$, the inequality
    \begin{equation*}
    \begin{split}
    &\|\mathbf{Q}_n\mathbf{R}_n-\mathbf{A}_n\mathbf{\Omega}_n\|_F\leq (\sqrt{I_{n+1}\dots I_N}+C_{n}\sqrt{\mu_n+R}+t)\Delta_{\mu_n+1}(\mathbf{A}_n)
    \end{split}
    \end{equation*}
    holds with probability at least $1-2 \exp(-c_{n}t^2)$, where $C_{n}=C_{K_{n}}$ and $c_{n}=c_{K_{n}}\geq0$ depend only on the sub-Gaussian norm $K_{n}=\max_j\|\mathbf{\Omega}_{n}(:,j)\|_{\psi_2}$.
\end{lemma}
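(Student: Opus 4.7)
The plan is to mirror the structure of the proof of Lemma \ref{RTT-lem2}, substituting Theorem \ref{RTT:thm4} (the Gaussian spectral-norm bound) with Theorem \ref{RTT-thm2} (the sub-Gaussian isotropic-column spectral-norm bound). The argument splits into a deterministic reduction, followed by a single probabilistic estimate on $\|\mathbf{\Omega}_n\|_2$.

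First, I would invoke \cite[Lemma 4.4]{che2020the}, exactly as was done in the proof of Lemma \ref{RTT-lem2}, to obtain the deterministic inequality
\begin{equation*}
\|\mathbf{Q}_n\mathbf{R}_n-\mathbf{A}_n\mathbf{\Omega}_n\|_F^2 \leq \sum_{i=\mu_n+1}^{\min\{\mu_{n-1}I_n,\mu_n+R\}} \sigma_i(\mathbf{A}_n\mathbf{\Omega}_n)^2.
\end{equation*}
Applying the standard singular-value inequality $\sigma_i(\mathbf{A}_n\mathbf{\Omega}_n)\leq \sigma_i(\mathbf{A}_n)\|\mathbf{\Omega}_n\|_2$ to each term and extending the summation up to $\min\{\mu_{n-1}I_n,I_{n+1}\dots I_N\}$, I would conclude the deterministic bound
\begin{equation*}
\|\mathbf{Q}_n\mathbf{R}_n-\mathbf{A}_n\mathbf{\Omega}_n\|_F \leq \|\mathbf{\Omega}_n\|_2\,\Delta_{\mu_n+1}(\mathbf{A}_n).
\end{equation*}
This reduces the task to controlling $\|\mathbf{\Omega}_n\|_2$, where the dimension that plays the role of $I$ in Theorem \ref{RTT-thm2} is $I_{n+1}\dots I_N$ and the role of $J$ is $\mu_n+R$.

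Next I would verify the hypotheses of Theorem \ref{RTT-thm2} for $\mathbf{\Omega}_n=\mathbf{\Omega}_{n,n+1}\odot\dots\odot\mathbf{\Omega}_{n,N}$. By the definition of the Khatri-Rao product, the $j$th column of $\mathbf{\Omega}_n$ is the Kronecker product $\mathbf{\Omega}_{n,n+1}(:,j)\otimes\dots\otimes\mathbf{\Omega}_{n,N}(:,j)$ of independent standard Gaussian vectors. Different columns are independent since the factor matrices are independently generated; the product of independent sub-Gaussian variables is sub-Gaussian, so each column is a sub-Gaussian random vector in $\mathbb{R}^{I_{n+1}\dots I_N}$; and the identity
\begin{equation*}
\mathbf{E}\bigl[\mathbf{\Omega}_n(:,j)\mathbf{\Omega}_n(:,j)^\top\bigr] = \bigotimes_{m=n+1}^{N}\mathbf{E}\bigl[\mathbf{\Omega}_{n,m}(:,j)\mathbf{\Omega}_{n,m}(:,j)^\top\bigr] = \mathbf{I}_{I_{n+1}\dots I_N}
\end{equation*}
confirms isotropy, which is also consistent with Lemma \ref{RTT-lem4}. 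With $K_n=\max_j\|\mathbf{\Omega}_n(:,j)\|_{\psi_2}$, Theorem \ref{RTT-thm2} then gives
\begin{equation*}
\|\mathbf{\Omega}_n\|_2 = \sigma_{\max}(\mathbf{\Omega}_n) \leq \sqrt{I_{n+1}\dots I_N} + C_n\sqrt{\mu_n+R} + t
\end{equation*}
with probability at least $1-2\exp(-c_n t^2)$, and substitution into the deterministic bound yields the claim.

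The main obstacle is the subtlety in applying Theorem \ref{RTT-thm2}: the statement as recorded above includes the requirement that each column have Euclidean norm exactly $\sqrt{I_{n+1}\dots I_N}$ almost surely, which is not satisfied by a Kronecker product of Gaussian vectors. Resolving this either requires appealing to the more general form of Vershynin's result in which the deterministic length condition is replaced by a sub-Gaussian tail estimate on the column norms, or performing a short concentration-of-measure argument on $\|\mathbf{\Omega}_n(:,j)\|_2$ whose fluctuations can be absorbed into the constants $C_n$ and $c_n$. A secondary, minor point is tracking the dependence of $K_n$ (and hence $C_n, c_n$) on the number of Khatri-Rao factors $N-n$, which grows but remains independent of the dimensions $I_m$; this is the price paid for replacing the full Gaussian sketch by its structured counterpart.
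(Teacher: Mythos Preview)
Your proof follows exactly the same route as the paper's: the deterministic reduction $\|\mathbf{Q}_n\mathbf{R}_n-\mathbf{A}_n\mathbf{\Omega}_n\|_F\le\|\mathbf{\Omega}_n\|_2\,\Delta_{\mu_n+1}(\mathbf{A}_n)$ via \cite[Lemma~4.4]{che2020the} and the singular-value inequality, followed by the spectral-norm bound from Theorem~\ref{RTT-thm2}. The paper's argument is in fact terser than yours and does not pause over the almost-sure column-norm hypothesis you flag; it simply invokes Theorem~\ref{RTT-thm2} directly, so your discussion of that subtlety goes beyond what the paper provides.
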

\begin{proof}
By Lemma 4.4 in \cite{che2020the} or Lemma 5.3 in \cite{che2021randomized}, we have
\begin{equation*}
\|\mathbf{Q}_n\mathbf{R}_n-\mathbf{A}_n\mathbf{\Omega}_n\|_F\leq \Delta_{\mu_{n}+1}(\mathbf{A}_n\mathbf{\Omega}_n).
\end{equation*}
Note that $\sigma_i(\mathbf{A}_n\mathbf{\Omega}_n)\leq \|\mathbf{\Omega}_n\|_2\sigma_i(\mathbf{A}_n)$. Then, we have
\begin{equation*}
\|\mathbf{Q}_n\mathbf{R}_n-\mathbf{A}_n\mathbf{\Omega}_n\|_F\leq \|\mathbf{\Omega}_n\|_2\Delta_{\mu_{n}+1}(\mathbf{A}_n).
\end{equation*}
Hence, the proof is completed by combining the above inequality and Theorem \ref{RTT-thm2}.
\end{proof}
We denote the SVD of $\mathbf{A}_{(n)}$ as $\mathbf{A}_{(n)}=\mathbf{U}_n\mathbf{\Sigma}_n\mathbf{V}_n^{\top}$, where $\mathbf{U}_n\in\mathbb{R}^{I_1\dots I_{n}\times I_1\dots I_n}$ and $\mathbf{V}\in\mathbb{R}^{I_{n+1}\dots I_N\times I_{n+1}\dots I_N}$ are orthogonal, and $\mathbf{\Sigma}\in\mathbb{R}^{I_1\dots I_{n}\times I_{n+1}\dots I_N}$ is diagonal with nonnegative entries. Define $\mathbf{\Omega}_n'=\mathbf{V}_n^\top\mathbf{\Omega}_n(:,)$. Let $\mathbf{\Omega}_{n,1}=\mathbf{\Omega}_n'(1:\mu_n,:)$ and $\mathbf{\Omega}_{n,1}=\mathbf{\Omega}_n'(\mu_n+1:I_{n+1}\dots I_N,:)$. It follows from Lemma \ref{RTT-lem4} that $\mathbf{V}_n^\top\mathbf{\Omega}_n$ is a random matrix with isotropic columns. Therefore, the matrices $\mathbf{\Omega}_{n,1}$ and $\mathbf{R}_{n,2}$ are also random matrices with isotropic columns.
\begin{theorem}
\label{RTT-thm3}
For a given $n$, suppose that $\mu_n$, $R$, and $I_m \ (m=n+1,\dots, N)$ are positive integers with $\mu_n< (\mu_n+R)< \min\{\mu_{n-1}I_n,I_{n+1}\dots I_N\}$. Let $\mathbf{\Omega}_{n,m}\in\mathbb{R}^{I_m\times (\mu_n+R)}$ be standard Gaussian matrix with $m=n+1,n+2,\dots,N$. Define $\mathbf{\Omega}_n=\mathbf{\Omega}_{n,n+1}\odot\dots\odot\mathbf{\Omega}_{n,N}$. Then, there exists a matrix $\mathbf{F}_n\in\mathbb{R}^{(\mu_n+R)\times I_{n+1}\dots I_N}$ such that
\begin{equation*}
    \|\mathbf{A}_n\mathbf{\Omega}_n\mathbf{F}_n-\mathbf{A}_n\|_F\leq \Delta_{\mu_n+1}(\mathbf{A}_n)+\frac{\sqrt{I_{n+1}\dots I_N-\mu_n}+C_n''\sqrt{\mu_n}+t}{\sqrt{\mu_n+R}-C_n'\sqrt{\mu_n}-t}\Delta_{\mu_n+1}(\mathbf{A}_n)
\end{equation*}
and
\begin{equation*}
\|\mathbf{F}_n\|_2\leq \frac{1}{\sqrt{\mu_n+R}-C_n'\sqrt{\mu_n}-t}
\end{equation*}
with probability at least $1-\exp(-c_n't^2)-\exp(-c_n''t^2)$, where $C_{n}'=C_{K_{n}'}$ and $c_{n}'=c_{K_{n}'}\geq0$ depend only on the sub-Gaussian norm $K_{n}'=\max_j\|\mathbf{\Omega}_{n,1}(j,:)\|_{\psi_2}$, and $C_{n}''=C_{K_{n}''}$ and $c_{n}'=c_{K_{n}''}\geq0$ depend only on the sub-Gaussian norm $K_{n}''=\max_j\|\mathbf{\Omega}_{n,2}(:,j)\|_{\psi_2}$.
\end{theorem}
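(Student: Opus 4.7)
\medskip

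\noindent\textbf{Proof proposal.} The plan is to mirror the strategy behind Theorem \ref{RTT-thm1} (the Gaussian version) but to replace every invocation of Theorems \ref{RTT:thm4}--\ref{RTT:thm5} by the sub-Gaussian concentration estimate of Theorem \ref{RTT-thm2}, with the isotropy of the transformed columns supplied by Lemma \ref{RTT-lem4}. Concretely, using the SVD $\mathbf{A}_n=\mathbf{U}_n\mathbf{\Sigma}_n\mathbf{V}_n^\top$ introduced just before the statement, I would partition $\mathbf{U}_n=[\mathbf{U}_{n,1},\mathbf{U}_{n,2}]$ and $\mathbf{\Sigma}_n=\mathrm{diag}(\mathbf{\Sigma}_{n,1},\mathbf{\Sigma}_{n,2})$ so that $\mathbf{\Sigma}_{n,1}\in\mathbb{R}^{\mu_n\times\mu_n}$ collects the dominant singular values. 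The random matrix $\mathbf{V}_n^\top\mathbf{\Omega}_n$ splits conformally into the blocks $\mathbf{\Omega}_{n,1}\in\mathbb{R}^{\mu_n\times(\mu_n+R)}$ and $\mathbf{\Omega}_{n,2}\in\mathbb{R}^{(I_{n+1}\cdots I_N-\mu_n)\times(\mu_n+R)}$ defined in the excerpt.

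\medskip

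\noindent First I would make an explicit choice of $\mathbf{F}_n$, taking it to be the pseudo-inverse based construction
\begin{equation*}
\mathbf{F}_n \;=\; \mathbf{\Omega}_{n,1}^{\dagger}\,\mathbf{\Sigma}_{n,1}^{-1}\,\mathbf{U}_{n,1}^\top,
\end{equation*}
which is well defined on the event that $\mathbf{\Omega}_{n,1}$ has full row rank. A direct block computation then yields
\begin{equation*}
\mathbf{A}_n\mathbf{\Omega}_n\mathbf{F}_n - \mathbf{A}_n \;=\; \mathbf{U}_{n,2}\mathbf{\Sigma}_{n,2}\mathbf{\Omega}_{n,2}\mathbf{\Omega}_{n,1}^{\dagger}\mathbf{U}_{n,1}^\top \;-\; \mathbf{U}_{n,2}\mathbf{\Sigma}_{n,2}\mathbf{V}_{n,2}^\top,
\end{equation*}
so that the triangle inequality and the orthogonality of $\mathbf{U}_{n,1}$, $\mathbf{V}_{n,2}$ produce
\begin{equation*}
\|\mathbf{A}_n\mathbf{\Omega}_n\mathbf{F}_n-\mathbf{A}_n\|_F \;\le\; \Delta_{\mu_n+1}(\mathbf{A}_n)\,\bigl(1+\|\mathbf{\Omega}_{n,2}\|_2\|\mathbf{\Omega}_{n,1}^{\dagger}\|_2\bigr),
\end{equation*}
while $\|\mathbf{F}_n\|_2\le \|\mathbf{\Omega}_{n,1}^{\dagger}\|_2/\sigma_{\mu_n}(\mathbf{A}_n)$. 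This reduces the theorem to producing high-probability bounds on $\|\mathbf{\Omega}_{n,1}^{\dagger}\|_2=1/\sigma_{\min}(\mathbf{\Omega}_{n,1})$ and on $\|\mathbf{\Omega}_{n,2}\|_2$.

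\medskip

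\noindent The next step is to activate Theorem \ref{RTT-thm2}. By Lemma \ref{RTT-lem4}, the columns of $\mathbf{V}_n^\top\mathbf{\Omega}_n$ are isotropic sub-Gaussian and independent, hence the same holds for the columns of $\mathbf{\Omega}_{n,1}$ and $\mathbf{\Omega}_{n,2}$ (being subsets of the rows, isotropy and independence across columns are preserved). Applying Theorem \ref{RTT-thm2} to $\mathbf{\Omega}_{n,1}^\top\in\mathbb{R}^{(\mu_n+R)\times\mu_n}$ yields
\begin{equation*}
\sigma_{\min}(\mathbf{\Omega}_{n,1}) \;\ge\; \sqrt{\mu_n+R}-C_n'\sqrt{\mu_n}-t
\end{equation*}
with probability at least $1-2\exp(-c_n't^2)$, and applying it analogously to $\mathbf{\Omega}_{n,2}$ produces an upper bound on $\|\mathbf{\Omega}_{n,2}\|_2$ of the form $\sqrt{I_{n+1}\cdots I_N-\mu_n}+C_n''\sqrt{\mu_n}+t$ with probability at least $1-2\exp(-c_n''t^2)$, where the constants depend only on the sub-Gaussian norms $K_n'$ and $K_n''$ stated in the theorem. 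A union bound over the two events and substitution into the preliminary inequalities delivers both claimed bounds.

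\medskip

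\noindent The main technical obstacle, and the step I would scrutinize most carefully, is the passage from the isotropy of the \emph{columns} of $\mathbf{V}_n^\top\mathbf{\Omega}_n$ (provided by Lemma \ref{RTT-lem4}) to a usable form of Theorem \ref{RTT-thm2} applied to the sub-blocks $\mathbf{\Omega}_{n,1}$ and $\mathbf{\Omega}_{n,2}$: Theorem \ref{RTT-thm2} presumes that each column has norm exactly $\sqrt{I}$ almost surely, whereas truncating a sub-Gaussian isotropic column only preserves isotropy in expectation. Reconciling this either requires a mild variant of Theorem \ref{RTT-thm2} for sub-Gaussian isotropic (not necessarily norm-one normalized) columns, or a conditioning/renormalization argument using the fact that the sub-Gaussian norms $K_n'$, $K_n''$ inherit controllable bounds from $K_n$. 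Once that technical point is handled, the remainder of the argument is routine algebra on the block structure displayed above.
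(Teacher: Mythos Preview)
Your overall plan---take the SVD of $\mathbf{A}_n$, split $\mathbf{V}_n^\top\mathbf{\Omega}_n$ into a top block and a tail block, and then control each block via Theorem~\ref{RTT-thm2}---is exactly the paper's route. The gap is in your explicit construction of $\mathbf{F}_n$.

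With $\mathbf{F}_n=\mathbf{\Omega}_{n,1}^{\dagger}\mathbf{\Sigma}_{n,1}^{-1}\mathbf{U}_{n,1}^\top$ the block computation does \emph{not} give the identity you display. One finds
\[
\mathbf{A}_n\mathbf{\Omega}_n\mathbf{F}_n
=\mathbf{U}_{n,1}\mathbf{U}_{n,1}^\top
+\mathbf{U}_{n,2}\mathbf{\Sigma}_{n,2}\mathbf{\Omega}_{n,2}\mathbf{\Omega}_{n,1}^{\dagger}\mathbf{\Sigma}_{n,1}^{-1}\mathbf{U}_{n,1}^\top,
\]
so after subtracting $\mathbf{A}_n$ there is a leftover term $\mathbf{U}_{n,1}\mathbf{U}_{n,1}^\top-\mathbf{U}_{n,1}\mathbf{\Sigma}_{n,1}\mathbf{V}_{n,1}^\top$ that is neither small nor controlled by $\Delta_{\mu_n+1}(\mathbf{A}_n)$. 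Independently, your bound $\|\mathbf{F}_n\|_2\le\|\mathbf{\Omega}_{n,1}^{\dagger}\|_2/\sigma_{\mu_n}(\mathbf{A}_n)$ carries a stray $1/\sigma_{\mu_n}(\mathbf{A}_n)$ factor that is absent from the stated conclusion (this is the $q=0$ case; compare Theorem~\ref{RTT-thm1}, where that factor appears only as $\sigma_{\mu_n}^{-2q}$).

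The paper's choice is $\mathbf{F}_n=(\mathbf{\Omega}_{n,1}^{\dagger},\ \mathbf{0})\,\mathbf{V}_n^\top$ (in its notation $\mathbf{F}_n=(\mathbf{H}_n^{\dagger},\mathbf{0})\mathbf{V}_n'^\top$). This acts as a right inverse on the top block of $\mathbf{V}_n^\top$, so $\mathbf{A}_n\mathbf{\Omega}_n\mathbf{F}_n$ reproduces the rank-$\mu_n$ piece $\mathbf{U}_{n,1}\mathbf{\Sigma}_{n,1}\mathbf{V}_{n,1}^\top$ exactly, and one is left with precisely
\[
\|\mathbf{A}_n\mathbf{\Omega}_n\mathbf{F}_n-\mathbf{A}_n\|_F
\le\bigl(1+\|\mathbf{\Omega}_{n,2}\|_2\,\|\mathbf{\Omega}_{n,1}^{\dagger}\|_2\bigr)\Delta_{\mu_n+1}(\mathbf{A}_n),
\qquad
\|\mathbf{F}_n\|_2=\|\mathbf{\Omega}_{n,1}^{\dagger}\|_2,
\]
with no singular-value contamination. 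With this correction the remainder of your argument goes through; in particular, your closing observation that Theorem~\ref{RTT-thm2} formally requires exactly normalized columns, whereas truncating an isotropic column only preserves isotropy in expectation, is a genuine subtlety that the paper also glosses over.
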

\begin{proof}
Firstly, we consider the case of $\mu_{n-1}I_n\geq I_{n+1}\dots I_N$. Consider the SVD of $\mathbf{A}_n$ as  $\mathbf{A}_n=\mathbf{U}_n'\mathbf{\Sigma}_n'\mathbf{V}_n'^{\top}$, where $\mathbf{U}_n'\in\mathbb{R}^{\mu_{n-1}I_n\times I_{n+1}\dots I_N}$ is orthonormal, $\mathbf{V}_n'\in\mathbb{R}^{I_{n+1}\dots I_N\times I_{n+1}\dots I_N}$ is orthogonal, and $\mathbf{\Sigma}_n'\in\mathbb{R}^{I_{n+1}\dots I_N\times I_{n+1}\dots I_N}$ is diagonal and its diagonal entries are the singular values of $\mathbf{A}_n$.

For the matrices $\mathbf{V}_n'$ and $\mathbf{\Omega}_n$, let
\begin{equation*}
\mathbf{V}_n'^\top\mathbf{\Omega}_n=
\begin{pmatrix}
\mathbf{H}_n\\
\mathbf{R}_n
\end{pmatrix},
\end{equation*}
with $\mathbf{H}_n\in\mathbb{R}^{\mu_n\times (\mu_n+R)}$ and $\mathbf{R}\in\mathbb{R}^{(I_{n+1}\dots I_N-\mu_n)\times (\mu_n+R)}$. It follows from Lemma \ref{RTT-lem4} that $\mathbf{V}_n'^\top\mathbf{\Omega}_n$ is a random matrix with isotropic columns. It is easy to see that $\mathbf{H}_n$ and $\mathbf{\Omega}_{n,1}$ have the same distribution. This fact also holds for $\mathbf{R}_n$ and $\mathbf{\Omega}_{n,2}$. Therefore, the matrices $\mathbf{H}_n$ and $\mathbf{R}_n$ are also random matrices with isotropic columns.

Define $\mathbf{F}_n=\mathbf{P}_n\mathbf{V}_n'^\top$, where $\mathbf{P}_n\in\mathbb{R}^{(\mu_n+R)\times I_{n+1}\dots I_N}$ satisfies
\begin{equation*}
\mathbf{P}_n=
\begin{pmatrix}
\mathbf{H}_n^\dag&\mathbf{0}_{(\mu_n+R)\times (I_{n+1}\dots I_N-\mu_n)}
\end{pmatrix}.
\end{equation*}
From Theorem \ref{RTT-thm2}, for any $t\geq 0$, we get
\begin{equation*}
    \|\mathbf{F}_n\|_2=\|\mathbf{P}_n\mathbf{V}_n'^\top\|_2
    =\|\mathbf{H}_n^\dag\|_2
    \leq\frac{\|\mathbf{H}_n^\dag\|_2}{\sigma_{\mu_n}(\mathbf{A}_n)^{2q}}=\frac{1}{(\sqrt{\mu_n+R}-C_n'\sqrt{\mu_n}-t)}
\end{equation*}
with probability not less than $1-2\exp(-c_n't^2)$, where $C_{n}'=C_{K_{n}'}$ and $c_{n}'=c_{K_{n}'}\geq0$ depend only on the sub-Gaussian norm $K_{n}'=\max_j\|\mathbf{H}_{n}(j,:)\|_{\psi_2}=\max_j\|\mathbf{\Omega}_{n,1}(j,:)\|_{\psi_2}$.

Now, we bound $\|(\mathbf{A}_n\mathbf{A}_n^{\top})^q\mathbf{A}_n\mathbf{\Omega}_n\mathbf{F}_n-\mathbf{A}_n\|_F$. We define $\mathbf{\Sigma}_{n,1}'$ and $\bm{\Sigma}_{n,2}'$ to be the $\mu_n\times \mu_n$ upper-left block and the $(I_{n+1}\dots I_N-\mu_n)\times (I_{n+1}\dots I_N-\mu_n)$ lower-right block of $\mathbf{\Sigma}_n$, respectively. Then, we get
\begin{equation}\label{RP:eqn14}
\begin{split}
\mathbf{A}_n\mathbf{\Omega}_n\mathbf{F}_n-\mathbf{A}_n
&=\mathbf{U}_n'\mathbf{\Sigma}_n'
\begin{pmatrix}
\mathbf{0}_{\mu_n\times \mu_n}&\mathbf{0}_{\mu_n\times (I_{n+1}\dots I_N-\mu_n)}\\
\mathbf{R}_n\mathbf{H}_n^\dag & -\mathbf{I}_{I_{n+1}\dots I_N-\mu_n}
\end{pmatrix}
\mathbf{V}_n'^{\top}\\
&=\mathbf{U}_n'
\begin{pmatrix}
\mathbf{0}_{\mu_n\times \mu_n}&\mathbf{0}_{\mu_n\times (I_{n+1}\dots I_N-\mu_n)}\\
\mathbf{\Sigma}_{n,2}'\mathbf{R}_n\mathbf{H}_n^\dag & -\mathbf{\Sigma}_{n,2}'
\end{pmatrix}
\mathbf{V}_n'^{\top},
\end{split}
\end{equation}
which implies that
\begin{equation*}
\begin{split}
\|\mathbf{A}_n\mathbf{\Omega}_n\mathbf{F}_n-\mathbf{A}_n\|_F
&\leq
\|\mathbf{\Sigma}_{n,2}'\|_F+\|\mathbf{R}_n\|_2\|\mathbf{H}^\dag\|_2\|\mathbf{\Sigma}_{n,2}'\|_F\\
&=(1+\|\mathbf{R}_n\|_2\|\mathbf{H}^\dag\|_2)\Delta_{\mu_n+1}(\mathbf{A}_n).
\end{split}
\end{equation*}
From Theorem \ref{RTT-thm2}, for any $t\geq 0$, we get
\begin{equation*}
\|\mathbf{R}_n\|_2\leq\sqrt{I_{n+1}\dots I_N-\mu_n}+C_n'\sqrt{\mu_n+R}+t
\end{equation*}
with probability not less than $1-2\exp(-c_n''t^2)$, where $C_{n}''=C_{K_{n}''}$ and $c_{n}''=c_{K_{n}''}\geq0$ depend only on the sub-Gaussian norm $K_{n}''=\max_j\|\mathbf{R}_{n}(:,j)\|_{\psi_2}=\max_j\|\mathbf{\Omega}_{n,2}(:,j)\|_{\psi_2}$.

The case of $\mu_{n-1}I_n< I_{n+1}\dots I_N$ is similar to the case of $\mu_{n-1}I_n\geq I_{n+1}\dots I_N$ except for the matrix $\mathbf{\Sigma}_n'$. For the last case, we have $$\mathbf{\Sigma}_n'=(\mathbf{\Sigma}_n'',\mathbf{0}_{\mu_{n-1}I_n\times (I_{n+1}\dots I_N-\mu_{n-1}I_n)}),$$ where $\mathbf{\Sigma}_n''\in\mathbb{R}^{\mu_{n-1}I_n\times \mu_{n-1}I_n}$ is diagonal and its diagonal entries are the singular values of $\mathbf{A}_n$.

Hence, the proof is completed.
\end{proof}
The following theorem is deduced by combining (\ref{RTT:eqnadd1}), Lemma \ref{RTT-lem1} with $q=0$, Lemma \ref{RTT-lem3} and Theorem \ref{RTT-thm3}.
\begin{theorem}
\label{RTT-thm7}
For each $n$, suppose that $\mu_n$, $R$, and $I_m \ (m=1,2,\dots, N)$ are positive integers with $\mu_n< (\mu_n+R)< \min\{\mu_{n-1}I_n,I_{n+1}\dots I_N\}$, let $\mathbf{\Omega}_{n,m}\in\mathbb{R}^{I_m\times (\mu_n+R)}$ be standard Gaussian matrix with $m=n+1,n+2,\dots,N$, and define $\mathbf{\Omega}_n=\mathbf{\Omega}_{n,n+1}\odot\dots\odot\mathbf{\Omega}_{n,N}$. For $\mathcal{A}\in\mathbb{R}^{I_1\times I_2\times\dots\times I_N}$, if all the matrices $\mathbf{Q}_n$ are derived from Algorithm {\rm \ref{RTT:alg3}} with the Khatri-Rao product of standard Gaussian matrices and $q=0$, then for any $t\geq 0$, the inequality
\begin{equation*}
\|\mathcal{A}-\mathcal{Q}_1\times_{2}^1\mathcal{Q}_2\times_{3}^1\dots\times_{3}^1\mathcal{Q}_N\|_F\leq
2 \sum\limits_{n=1}^{N-1}\beta_n
\Delta_{\mu_n+1}(\mathbf{A}_{([n])})
\end{equation*}
holds with the probability at least $1-\sum_{n=1}^{N-1}(\exp(-c_nt^2)+\exp(-c_n't^2)+\exp(-c_n''t^2))$, where the expression of $\beta_n$ is given by
\begin{equation*}
    \beta_n=1+\frac{\sqrt{I_{n+1}\dots I_N-\mu_n}+C_n''\sqrt{\mu_n}+t}{\sqrt{\mu_n+R}-C_n'\sqrt{\mu_n}-t}+\frac{\sqrt{I_{n+1}\dots I_N}+C_n''\sqrt{\mu_n}+t}{\sqrt{\mu_n+R}-C_n'\sqrt{\mu_n}-t}.
\end{equation*}
Here, for each $n$, $C_{n}=C_{K_{n}}$ and $c_{n}=c_{K_{n}}\geq0$ depend only on  $K_{n}=\max_j\|\mathbf{\Omega}_{n}(:,j)\|_{\psi_2}$, $C_{n}'=C_{K_{n}'}$ and $c_{n}'=c_{K_{n}'}\geq0$ depend only on $K_{n}'=\max_j\|\mathbf{\Omega}_{n,1}(j,:)\|_{\psi_2}$, and $C_{n}''=C_{K_{n}''}$ and $c_{n}'=c_{K_{n}''}\geq0$ depend only on $K_{n}''=\max_j\|\mathbf{G}_{n,2}(:,j)\|_{\psi_2}$.
\end{theorem}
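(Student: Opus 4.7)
The plan is to mirror the proof of Theorem \ref{RTT:thm13}, replacing the probabilistic ingredients tailored to a standard Gaussian sketch with the Khatri--Rao analogs established in Lemma \ref{RTT-lem3} and Theorem \ref{RTT-thm3}. First I would invoke inequality (\ref{RTT:eqnadd1}) to split the global error as
\[
\|\mathcal{A}-\mathcal{Q}_1\times_{2}^1\mathcal{Q}_2\times_{3}^1\dots\times_{3}^1\mathcal{Q}_N\|_F \leq \sum_{n=1}^{N-1}\|\mathbf{A}_n - \mathbf{Q}_n\mathbf{Q}_n^\top \mathbf{A}_n\|_F,
\]
and then, for each fixed $n$, apply Lemma \ref{RTT-lem1} with $q=0$ to obtain
\[
\|\mathbf{A}_n - \mathbf{Q}_n\mathbf{Q}_n^\top \mathbf{A}_n\|_F^2 \leq 2\|\mathbf{A}_n\mathbf{\Omega}_n\mathbf{F}_n - \mathbf{A}_n\|_F^2 + 2\|\mathbf{F}_n\|_2^2\|\mathbf{Q}_n\mathbf{R}_n - \mathbf{A}_n\mathbf{\Omega}_n\|_F^2.
\]
Taking square roots and using $\sqrt{a+b}\leq \sqrt{a}+\sqrt{b}$, followed by summation in $n$, produces the same master inequality used in the proof of Theorem \ref{RTT:thm13}, namely
\[
\|\mathcal{A}-\mathcal{Q}_1\times_{2}^1\dots\times_{3}^1\mathcal{Q}_N\|_F \leq 2\sum_{n=1}^{N-1}\|\mathbf{A}_n\mathbf{\Omega}_n\mathbf{F}_n - \mathbf{A}_n\|_F + 2\sum_{n=1}^{N-1}\|\mathbf{F}_n\|_2\|\mathbf{Q}_n\mathbf{R}_n - \mathbf{A}_n\mathbf{\Omega}_n\|_F.
\]

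Next, for each $n$, I substitute the Khatri--Rao probabilistic estimates. Lemma \ref{RTT-lem3} supplies the bound
\[
\|\mathbf{Q}_n\mathbf{R}_n - \mathbf{A}_n\mathbf{\Omega}_n\|_F \leq \bigl(\sqrt{I_{n+1}\dots I_N}+C_n\sqrt{\mu_n+R}+t\bigr)\Delta_{\mu_n+1}(\mathbf{A}_n)
\]
with probability at least $1-2\exp(-c_n t^2)$, while Theorem \ref{RTT-thm3} furnishes the matching bounds for $\|\mathbf{A}_n\mathbf{\Omega}_n\mathbf{F}_n - \mathbf{A}_n\|_F$ and $\|\mathbf{F}_n\|_2$ in terms of the same tail $\Delta_{\mu_n+1}(\mathbf{A}_n)$. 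Plugging these three estimates into the master inequality and collecting the resulting scalars produces exactly the factor $\beta_n$ appearing in the statement.

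The final step is to pass from $\Delta_{\mu_n+1}(\mathbf{A}_n)$, which depends on the partially compressed matrix produced after $\mathbf{Q}_1,\dots,\mathbf{Q}_{n-1}$ have been applied, to the intrinsic tail $\Delta_{\mu_n+1}(\mathbf{A}_{([n])})$ of the original unfolding. This is handled by the singular-value interlacing inequality recalled in the remark preceding Theorem \ref{RTT:thm13}, namely
\[
\sum_{i=\mu_n+1}^{\min(\mu_{n-1}I_n,I_{n+1}\dots I_N)}\sigma_i(\mathbf{A}_n)^2 \leq \sum_{i=\mu_n+1}^{\min(I_1\dots I_n,I_{n+1}\dots I_N)}\sigma_i(\mathbf{A}_{([n])})^2.
\]
Summing over $n$ on the intersection of the good events yields the claimed deterministic inequality, and a union bound over the three independent failure events per stage (one from Lemma \ref{RTT-lem3} on $\mathbf{\Omega}_n$, and two from Theorem \ref{RTT-thm3} on the two row blocks $\mathbf{\Omega}_{n,1}$ and $\mathbf{\Omega}_{n,2}$ in the right-singular-vector basis of $\mathbf{A}_n$) and over $n=1,\dots,N-1$ produces the stated overall success probability.

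The principal obstacle is not analytical novelty but careful bookkeeping: managing the three distinct sub-Gaussian constants $K_n,K_n',K_n''$ that arise from the full Khatri--Rao matrix and its two row blocks after rotation, and verifying that these blocks remain independent sub-Gaussian-column matrices so that Theorem \ref{RTT-thm2} is applicable. Once these points are confirmed, the argument is essentially a direct transplant of the Gaussian proof of Theorem \ref{RTT:thm13}.
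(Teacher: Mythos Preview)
Your proposal is correct and follows essentially the same route as the paper: the paper states that the theorem is ``deduced by combining (\ref{RTT:eqnadd1}), Lemma \ref{RTT-lem1} with $q=0$, Lemma \ref{RTT-lem3} and Theorem \ref{RTT-thm3},'' which is exactly the master-inequality-plus-substitution strategy you describe, mirroring the proof of Theorem \ref{RTT:thm13} with the Khatri--Rao lemmas in place of the Gaussian ones. Your additional remarks on the interlacing step and the union bound are accurate and fill in details the paper leaves implicit.
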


\subsection{The case for Algorithm \ref{RTT:alg2}}
When $\{\mathbf{Q}_1,\mathbf{Q}_2,\dots,\mathbf{Q}_N\}$ is obtained from Algorithm {\rm \ref{RTT:alg2}} with $q=1$, we now consider the upper bound for $\|\mathcal{A}-\mathcal{Q}_1\times_{2}^1\mathcal{Q}_2\times_{3}^1\dots\times_{3}^1\mathcal{Q}_N\|_F$. First of all, following the work in \cite{che2023efficient}, we have
\begin{equation}
\label{RTT-eqn4}
    \|\mathbf{A}_n-\mathbf{Q}_n\mathbf{Q}_n^\top\mathbf{A}_n\|_F^2\leq {\rm rank}(\mathbf{A}_n)\|\mathbf{A}_n\mathbf{A}_n^\top-\mathbf{Q}_n\mathbf{Q}_n^\top\mathbf{A}_n\mathbf{A}_n^\top\|_F.
\end{equation}
For the term $\|\mathbf{A}_n\mathbf{A}_n^\top-\mathbf{Q}_n\mathbf{Q}_n^\top\mathbf{A}_n\mathbf{A}_n^\top\|_F$, we have the following lemma.
\begin{lemma}
\label{RTT-lem5}
For a given $n$, suppose that $\mu_{n-1}$, $\mu_n$, $R$, and $I_m \ (m=n,n+1,\dots, N)$ are positive integers with $\mu_n< (\mu_n+R)< \min\{\mu_{n-1}I_n,I_{n+1}\dots I_N\}$. For a given matrix $\mathbf{A}_n\in\mathbb{R}^{\mu_{n-1}I_n\times I_{n+1}\dots I_N}$, there exist an orthonormal matrix $\mathbf{Q}_n\in\mathbb{R}^{\mu_{n-1}I_n\times \mu_n}$, and two matrices $\mathbf{\Omega}_n\in\mathbb{R}^{\mu_{n-1}I_n\times (\mu_n+R)}$ and $\mathbf{F}_n\in\mathbb{R}^{(\mu_n+R)\times \mu_{n-1}I_n}$ such that
\begin{equation*}
\begin{split}
&\|\mathbf{A}_n\mathbf{A}_n^\top-\mathbf{Q}_n\mathbf{Q}_n^\top\mathbf{A}_n\mathbf{A}_n^\top\|_F=2\|\mathbf{A}_n\mathbf{A}_n^\top\mathbf{\Omega}_n\mathbf{F}_n-\mathbf{A}_n\mathbf{A}_n^\top\|_F\\
&+2\|\mathbf{F}_n\|_2\|\mathbf{Q}_n\mathbf{R}_n-\mathbf{A}_n\mathbf{A}_n^\top\mathbf{\Omega}_n\|_F,
\end{split}
\end{equation*}
\end{lemma}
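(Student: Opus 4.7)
The plan is to derive Lemma \ref{RTT-lem5} as a direct specialization of Lemma \ref{RTT-lem1}, by substituting the symmetric matrix $\mathbf{A}_n\mathbf{A}_n^{\top}$ in place of $\mathbf{A}_n$ and choosing the power parameter $q=0$. There is nothing new to prove beyond bookkeeping on dimensions and a clean identification of the auxiliary matrices.

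First I would introduce the square matrix $\widetilde{\mathbf{A}}_n := \mathbf{A}_n\mathbf{A}_n^{\top}\in\mathbb{R}^{\mu_{n-1}I_n\times \mu_{n-1}I_n}$ and treat it as the input of Lemma \ref{RTT-lem1}. Under this substitution, the role formerly played by the dimension $I_{n+1}\dots I_N$ is now played by $\mu_{n-1}I_n$, so the ambient matrix spaces appearing in Lemma \ref{RTT-lem1} become exactly those claimed in Lemma \ref{RTT-lem5}: $\mathbf{Q}_n\in\mathbb{R}^{\mu_{n-1}I_n\times\mu_n}$ orthonormal, $\mathbf{R}_n\in\mathbb{R}^{\mu_n\times(\mu_n+R)}$ upper triangular, $\mathbf{\Omega}_n\in\mathbb{R}^{\mu_{n-1}I_n\times(\mu_n+R)}$, and $\mathbf{F}_n\in\mathbb{R}^{(\mu_n+R)\times\mu_{n-1}I_n}$.

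Second, I would check that the oversampling/dimension hypothesis of Lemma \ref{RTT-lem1} carries over. The hypothesis of Lemma \ref{RTT-lem5}, namely $\mu_n<(\mu_n+R)<\min\{\mu_{n-1}I_n,I_{n+1}\dots I_N\}$, in particular forces $\mu_n+R<\mu_{n-1}I_n$, which is exactly the requirement obtained by specializing the condition $\mu_n+R<\min\{\mu_{n-1}I_n,I_{n+1}\dots I_N\}$ of Lemma \ref{RTT-lem1} to the square case $I_{n+1}\dots I_N\rightsquigarrow \mu_{n-1}I_n$.

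Third, I would take $q=0$ in Lemma \ref{RTT-lem1}, so that the auxiliary matrix there reduces to $\mathbf{B}_n=(\widetilde{\mathbf{A}}_n\widetilde{\mathbf{A}}_n^{\top})^{0}\widetilde{\mathbf{A}}_n=\widetilde{\mathbf{A}}_n=\mathbf{A}_n\mathbf{A}_n^{\top}$, which is precisely the object appearing on every term of the right-hand side of Lemma \ref{RTT-lem5}. Inserting this into the conclusion of Lemma \ref{RTT-lem1} (with $\mathbf{A}_n$ replaced by $\widetilde{\mathbf{A}}_n$) produces the displayed estimate of Lemma \ref{RTT-lem5} verbatim.

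Because the argument is purely a substitution, there is no real analytic obstacle; the only thing to be careful about is the dimension matching of $\mathbf{\Omega}_n$ and $\mathbf{F}_n$ under the substitution $I_{n+1}\dots I_N\rightsquigarrow\mu_{n-1}I_n$, and the observation that $\mathbf{R}_n$ on the right-hand side is the same triangular factor produced inside Lemma \ref{RTT-lem1}. The point of isolating this lemma is that it will feed into \eqref{RTT-eqn4} in the subsequent analysis of Algorithm \ref{RTT:alg2} with $q=1$: the left-hand side controls $\|\mathbf{A}_n-\mathbf{Q}_n\mathbf{Q}_n^{\top}\mathbf{A}_n\|_F^{2}$ up to a rank factor, and the right-hand side can then be bounded using the existing sub-Gaussian machinery (Theorems \ref{RTT:thm4}--\ref{RTT:thm5} and Lemmas \ref{RTT-lem2}--\ref{RTT-lem3}) applied to the symmetric matrix $\mathbf{A}_n\mathbf{A}_n^{\top}$ rather than to $\mathbf{A}_n$ itself.
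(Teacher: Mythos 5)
Your approach is exactly the one the paper intends: Lemma \ref{RTT-lem5} is stated without an explicit proof, and the neighbouring results (Lemma \ref{RTT-lem6}, Theorem \ref{RTT-thm6}) are all obtained by the same device of replacing $\mathbf{A}_n$ (equivalently $\mathbf{B}_n$) by $\mathbf{A}_n\mathbf{A}_n^\top$ and taking $q=0$, so your specialization of Lemma \ref{RTT-lem1} to $\widetilde{\mathbf{A}}_n=\mathbf{A}_n\mathbf{A}_n^\top$ together with the dimension bookkeeping $I_{n+1}\dots I_N\rightsquigarrow\mu_{n-1}I_n$ is the right route and is essentially complete. One caveat: the substitution does not yield the displayed formula \emph{verbatim} --- it yields the squared identity $\|\widetilde{\mathbf{A}}_n-\mathbf{Q}_n\mathbf{Q}_n^\top\widetilde{\mathbf{A}}_n\|_F^2=2\|\widetilde{\mathbf{A}}_n\mathbf{\Omega}_n\mathbf{F}_n-\widetilde{\mathbf{A}}_n\|_F^2+2\|\mathbf{F}_n\|_2^2\|\mathbf{Q}_n\mathbf{R}_n-\widetilde{\mathbf{A}}_n\mathbf{\Omega}_n\|_F^2$, from which the unsquared form only follows as an inequality (via $\sqrt{2a^2+2b^2}\le\sqrt{2}(a+b)\le 2(a+b)$), so the missing squares and the equality sign in the lemma as printed are a defect of the statement rather than something your argument can or should reproduce, and you should record the result as a bound of the form ``$\le$''.
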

The following theorem is obtained by replacing $\mathbf{B}_n$ in Lemma \ref{RTT-lem2} with $\mathbf{A}_n\mathbf{A}_n^\top$.
\begin{lemma}
\label{RTT-lem6}
For a given $n$, suppose that $\mu_{n-1}$, $\mu_n$, $R$, and $I_m \ (m=n,n+1,\dots, N)$ are positive integers with $\mu_n< (\mu_n+R)< \min\{\mu_{n-1}I_n,I_{n+1}\dots I_N\}$. Suppose that $\mathbf{\Omega}_n\in\mathbb{R}^{\mu_{n-1}I_n\times (\mu_n+R)}$ is a standard Gaussian matrix. We define $a_n$ and $a_n'$ as in Theorem \ref{RTT:thm4}. Then, the inequality
\begin{equation*}
\|\mathbf{Q}_n\mathbf{R}_n-\mathbf{A}_n\mathbf{A}_n^\top\mathbf{\Omega}_n\|_F\leq a_n\sqrt{\mu_{n-1}I_n}\Delta_{\mu_n+1}(\mathbf{A}_n\mathbf{A}_n^\top)
\end{equation*}
holds with probability at least $1-\exp(-a_n'\mu_{n-1}I_n)$, where
$a_n=6\tau\sqrt{a_n'+1}$, $\tau\geq1$ and
\begin{equation*}
\Delta_{\mu_n+1}(\mathbf{A}_n\mathbf{A}_n^\top)=\left(\sum_{k=\mu_n+1}^{\mu_{n-1}I_n}\sigma_k(\mathbf{A}_n\mathbf{A}_n^\top)^2\right)^{1/2}
\end{equation*}
\end{lemma}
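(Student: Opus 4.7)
The plan is to mirror the proof of Lemma \ref{RTT-lem2} essentially verbatim, treating $\mathbf{A}_n\mathbf{A}_n^\top$ as the analogue of the matrix $\mathbf{B}_n=(\mathbf{A}_n\mathbf{A}_n^\top)^q\mathbf{A}_n$ there. The key observation is that the derivation in Lemma \ref{RTT-lem2} only used two structural facts about $\mathbf{B}_n$: (a) a tail-energy bound on the best rank-$\mu_n$ approximation error of the product $\mathbf{B}_n\mathbf{\Omega}_n$, and (b) the submultiplicative bound on singular values $\sigma_i(\mathbf{B}_n\mathbf{\Omega}_n)\leq \|\mathbf{\Omega}_n\|_2\sigma_i(\mathbf{B}_n)$. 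Both of these depend only on the matrix being multiplied by $\mathbf{\Omega}_n$, not on any specific form of the matrix, so they remain valid when $\mathbf{B}_n$ is replaced by the symmetric positive semidefinite matrix $\mathbf{A}_n\mathbf{A}_n^\top\in\mathbb{R}^{\mu_{n-1}I_n\times\mu_{n-1}I_n}$.

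Concretely, first I would invoke Lemma 4.4 in \cite{che2020the}, which (since $\mathbf{Q}_n\mathbf{R}_n$ is the optimal rank-$\mu_n$ approximation to the sketched matrix in Frobenius norm) gives
\begin{equation*}
\|\mathbf{Q}_n\mathbf{R}_n-\mathbf{A}_n\mathbf{A}_n^\top\mathbf{\Omega}_n\|_F^2\leq \sum_{i=\mu_n+1}^{\min\{\mu_{n-1}I_n,\mu_n+R\}}\sigma_i(\mathbf{A}_n\mathbf{A}_n^\top\mathbf{\Omega}_n)^2.
\end{equation*}
Then I would apply Lemma 4.3 in \cite{che2020the} termwise to obtain $\sigma_i(\mathbf{A}_n\mathbf{A}_n^\top\mathbf{\Omega}_n)\leq \|\mathbf{\Omega}_n\|_2\sigma_i(\mathbf{A}_n\mathbf{A}_n^\top)$, which upon summing gives the deterministic estimate
\begin{equation*}
\|\mathbf{Q}_n\mathbf{R}_n-\mathbf{A}_n\mathbf{A}_n^\top\mathbf{\Omega}_n\|_F\leq \|\mathbf{\Omega}_n\|_2\,\Delta_{\mu_n+1}(\mathbf{A}_n\mathbf{A}_n^\top).
\end{equation*}

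The only place where the dimension change matters is the probabilistic bound on $\|\mathbf{\Omega}_n\|_2$. Since now $\mathbf{\Omega}_n\in\mathbb{R}^{\mu_{n-1}I_n\times(\mu_n+R)}$ has more rows than columns (by the hypothesis $\mu_n+R<\mu_{n-1}I_n$), I would apply Theorem \ref{RTT:thm4} to the transpose $\mathbf{\Omega}_n^\top\in\mathbb{R}^{(\mu_n+R)\times\mu_{n-1}I_n}$, which has rows no greater than columns as required by that theorem. This yields $\|\mathbf{\Omega}_n\|_2=\|\mathbf{\Omega}_n^\top\|_2\leq a_n\sqrt{\mu_{n-1}I_n}$ with probability at least $1-\exp(-a_n'\mu_{n-1}I_n)$, and combining this with the deterministic estimate finishes the proof.

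There is no real obstacle: the argument is a direct transcription of Lemma \ref{RTT-lem2} with a careful accounting of dimensions in the application of Theorem \ref{RTT:thm4}. The only thing to watch is that Theorem \ref{RTT:thm4} must be applied to $\mathbf{\Omega}_n^\top$ rather than to $\mathbf{\Omega}_n$ itself, so that the factor $\sqrt{\mu_{n-1}I_n}$ (the larger dimension) appears in the bound, matching the statement of the lemma.
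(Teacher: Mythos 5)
Your proposal is correct and follows essentially the same route as the paper, which simply states that Lemma \ref{RTT-lem6} is obtained by substituting $\mathbf{A}_n\mathbf{A}_n^\top$ for $\mathbf{B}_n$ in the proof of Lemma \ref{RTT-lem2}. Your explicit accounting of why Theorem \ref{RTT:thm4} must be applied to $\mathbf{\Omega}_n^\top$ so that the larger dimension $\mu_{n-1}I_n$ appears in both the bound and the failure probability is a detail the paper leaves implicit, and it matches the stated conclusion.
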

We now consider the upper bounds for $\|\mathbf{A}_n\mathbf{A}_n^\top\mathbf{\Omega}_n\mathbf{F}_n-\mathbf{A}_n\mathbf{A}_n^\top\|_F$ and $\|\mathbf{F}_n\|_2$, which are summarized in the following theorem.
\begin{theorem}
\label{RTT-thm6}
For a given $n$, suppose that $\mu_{n-1}$, $\mu_n$, $R$, and $I_m \ (m=n,n+1,\dots, N)$ are positive integers with $\mu_n< (\mu_n+R)< \min\{\mu_{n-1}I_n,I_{n+1}\dots I_N\}$. Suppose that $\mathbf{\Omega}_n\in\mathbb{R}^{\mu_{n-1}I_n\times (\mu_n+R)}$ is a standard Gaussian matrix. We define $a_n$, $a_n'$, $b_n$ and $b_n'$ as in Theorems \ref{RTT:thm4} and \ref{RTT:thm5}. Then, there exists a matrix $\mathbf{F}_n\in\mathbb{R}^{(\mu_n+R)\times \mu_{n-1}I_n}$ such that
\begin{equation*}
\|\mathbf{A}_n\mathbf{A}_n^\top\mathbf{\Omega}_n\mathbf{F}_n-\mathbf{A}_n\mathbf{A}_n^\top\|_F\leq \Delta_{\mu_n+1}(\mathbf{A}_n\mathbf{A}_n^\top)+\frac{a_n\sqrt{\mu_{n-1}I_n}}{b_n\sqrt{\mu_n+R}}\Delta_{\mu_n+1}(\mathbf{A}_n\mathbf{A}_n^\top)
\end{equation*}
and
\begin{equation*}
\|\mathbf{F}_n\|_2\leq \frac{1}{b_n\sqrt{\mu_n+R}}
\end{equation*}
with probability at least $$1-\exp(-a_n'\mu_{n-1}I_n)-\exp(-b_n'(\mu_n+R)).$$
\end{theorem}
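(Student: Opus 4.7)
The plan is to mirror the proof of Theorem \ref{RTT-thm1} (equivalently the first half of Theorem \ref{RTT-thm3}), but applied to the symmetric PSD matrix $\mathbf{M}_n:=\mathbf{A}_n\mathbf{A}_n^\top\in\mathbb{R}^{\mu_{n-1}I_n\times\mu_{n-1}I_n}$. The only structural change is that the Gaussian test matrix $\mathbf{\Omega}_n\in\mathbb{R}^{\mu_{n-1}I_n\times(\mu_n+R)}$ now acts on a ``square'' side of size $\mu_{n-1}I_n$ rather than on the long side $I_{n+1}\cdots I_N$ of $\mathbf{A}_n$, so $\mu_{n-1}I_n$ will replace $I_{n+1}\cdots I_N$ in the final constants via Theorems \ref{RTT:thm4} and \ref{RTT:thm5}.

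First I would write the eigendecomposition $\mathbf{M}_n=\mathbf{U}_n\mathbf{D}_n\mathbf{U}_n^\top$, where $\mathbf{U}_n$ is orthogonal and $\mathbf{D}_n$ is diagonal with the squared singular values of $\mathbf{A}_n$ in nonincreasing order. Partition $\mathbf{U}_n=(\mathbf{U}_{n,1},\mathbf{U}_{n,2})$ by taking the first $\mu_n$ columns, and correspondingly split $\mathbf{D}_n=\mathrm{diag}(\mathbf{D}_{n,1},\mathbf{D}_{n,2})$, so that $\|\mathbf{D}_{n,2}\|_F=\Delta_{\mu_n+1}(\mathbf{M}_n)$. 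Set
\begin{equation*}
\mathbf{U}_n^\top\mathbf{\Omega}_n=\begin{pmatrix}\mathbf{H}_n\\ \mathbf{R}_n\end{pmatrix},\qquad \mathbf{H}_n\in\mathbb{R}^{\mu_n\times(\mu_n+R)},\ \mathbf{R}_n\in\mathbb{R}^{(\mu_{n-1}I_n-\mu_n)\times(\mu_n+R)},
\end{equation*}
and use orthogonal invariance of the Gaussian law to conclude that $\mathbf{H}_n$ and $\mathbf{R}_n$ are themselves standard Gaussian. Define the candidate
\begin{equation*}
\mathbf{F}_n=\bigl(\mathbf{H}_n^\dagger,\ \mathbf{0}_{(\mu_n+R)\times(\mu_{n-1}I_n-\mu_n)}\bigr)\,\mathbf{U}_n^\top,
\end{equation*}
so that $\|\mathbf{F}_n\|_2=\|\mathbf{H}_n^\dagger\|_2=1/\sigma_{\mu_n}(\mathbf{H}_n)$ by orthogonality of $\mathbf{U}_n$. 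Applying Theorem \ref{RTT:thm5} to $\mathbf{H}_n$ with row dimension $\mu_n$ and column dimension $\mu_n+R$ gives the second assertion $\|\mathbf{F}_n\|_2\le 1/(b_n\sqrt{\mu_n+R})$ with probability at least $1-\exp(-b_n'(\mu_n+R))$.

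Next I would compute $\mathbf{M}_n\mathbf{\Omega}_n\mathbf{F}_n-\mathbf{M}_n$ in the diagonalizing basis. A direct block calculation, completely analogous to (\ref{RP:eqn14}), yields
\begin{equation*}
\mathbf{M}_n\mathbf{\Omega}_n\mathbf{F}_n-\mathbf{M}_n=\mathbf{U}_n\begin{pmatrix}\mathbf{0} & \mathbf{0}\\ \mathbf{D}_{n,2}\mathbf{R}_n\mathbf{H}_n^\dagger & -\mathbf{D}_{n,2}\end{pmatrix}\mathbf{U}_n^\top,
\end{equation*}
and the triangle inequality together with unitary invariance of $\|\cdot\|_F$ gives
\begin{equation*}
\|\mathbf{M}_n\mathbf{\Omega}_n\mathbf{F}_n-\mathbf{M}_n\|_F\le\bigl(1+\|\mathbf{R}_n\|_2\|\mathbf{H}_n^\dagger\|_2\bigr)\Delta_{\mu_n+1}(\mathbf{M}_n).
\end{equation*}
To control $\|\mathbf{R}_n\|_2$, I would use $\|\mathbf{R}_n\|_2\le\|\mathbf{U}_n^\top\mathbf{\Omega}_n\|_2=\|\mathbf{\Omega}_n\|_2$ and invoke Theorem \ref{RTT:thm4} on $\mathbf{\Omega}_n^\top$ (whose row count $\mu_n+R$ does not exceed its column count $\mu_{n-1}I_n$), giving $\|\mathbf{\Omega}_n\|_2\le a_n\sqrt{\mu_{n-1}I_n}$ with probability at least $1-\exp(-a_n'\mu_{n-1}I_n)$. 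Combining the two high-probability bounds by a union bound produces the stated failure probability and the announced upper bound.

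The main obstacle is essentially bookkeeping rather than new ideas: identifying the correct ``long'' dimension of $\mathbf{\Omega}_n$ (here $\mu_{n-1}I_n$) so that Theorem \ref{RTT:thm4} is applied with the right orientation, and verifying that the top and bottom blocks of $\mathbf{U}_n^\top\mathbf{\Omega}_n$ inherit independent standard Gaussian statistics so that Theorem \ref{RTT:thm5} applies to $\mathbf{H}_n$. Once these are in place, the argument is the direct specialization of the Theorem \ref{RTT-thm1}/\ref{RTT-thm3} template with $\mathbf{A}_n$, $\mathbf{V}_n'$, $\mathbf{\Sigma}_n'$ replaced by $\mathbf{M}_n$, $\mathbf{U}_n$, $\mathbf{D}_n$.
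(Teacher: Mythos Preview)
Your proposal is correct and follows essentially the same approach as the paper, which simply states that the result follows by replacing $\mathbf{B}_n$ in Theorem~\ref{RTT-thm1} with $\mathbf{A}_n\mathbf{A}_n^\top$ and setting $q=0$. Your write-up spells out this specialization in detail (eigendecomposition of the symmetric matrix $\mathbf{M}_n$, block partition of $\mathbf{U}_n^\top\mathbf{\Omega}_n$, construction of $\mathbf{F}_n$ via $\mathbf{H}_n^\dagger$, and application of Theorems~\ref{RTT:thm4} and~\ref{RTT:thm5} with the correct dimension $\mu_{n-1}I_n$), which is precisely the template behind Theorems~\ref{RTT-thm1} and~\ref{RTT-thm3}.
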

\begin{proof}
The results are easily proved by replacing $\mathbf{B}_n$ in Theorem \ref{RTT-thm1} with $\mathbf{A}_n\mathbf{A}_n^\top$ and setting $q=0$.
\end{proof}
By combining (\ref{RTT:eqnadd1}), (\ref{RTT-eqn4}), Lemmas \ref{RTT-lem5} and \ref{RTT-lem6}, and Theorem \ref{RTT-thm6}, we have the following theorem.
\begin{theorem}
\label{RTT-thm4}
For each $n$, suppose that $\mu_{n-1}$, $\mu_n$, $R$, and $I_m \ (m=n,n+1,\dots, N)$ are positive integers with $\mu_n< (\mu_n+R)< \min\{\mu_{n-1}I_n,I_{n+1}\dots I_N\}$. Suppose that $\mathbf{\Omega}_n\in\mathbb{R}^{\mu_{n-1}I_{n}\times (\mu_n+R)}$ is a standard Gaussian matrix with $\mu_n+R>(1+1/\ln(\mu_n))\mu_n$. We define $a_n$, $a_n'$, $b_n$ and $b_n'$ as in Theorems \ref{RTT:thm4} and \ref{RTT:thm5}.

For $\mathcal{A}\in\mathbb{R}^{I_1\times I_2\times\dots\times I_N}$, if all the matrices $\mathbf{Q}_n$ are derived from Algorithm {\rm \ref{RTT:alg2}} with $q=1$, then the inequality
\begin{equation*}
     \|\mathcal{A}-\mathcal{Q}_1\times_{2}^1\mathcal{Q}_2\times_{3}^1\dots\times_{3}^1\mathcal{Q}_N\|_F\leq
     2 \sum\limits_{n=1}^{N-1}\gamma_n
     {\rm rank}(\mathbf{A}_{([n])})\Delta_{\mu_n+1}(\mathbf{A}_{([n])}\mathbf{A}_{([n])}^\top)
\end{equation*}
holds with the probability at least $$1-\sum_{n=1}^{N-1}(\exp(-b_{n}'(\mu_n+R))+\exp(-a_{n}'\mu_{n-1}I_n)),$$ where the expression of $\gamma_n$ is given by
\begin{equation*}
\begin{split}
\gamma_n=&1+\sqrt{\frac{a_{n}^2\mu_{n-1}I_n}{b_{n}^2(\mu_n+R)}}+\sqrt{\frac{a_{n}^2\mu_{n-1}I_n}{b_{n}^2(\mu_n+R)}}.
\end{split}
\end{equation*}
Here $\Delta_{\mu_n+1}(\mathbf{A}_{([n])}\mathbf{A}_{([n])}^\top)$ is defined by
\begin{equation*}
\Delta_{\mu_n+1}(\mathbf{A}_{([n])}\mathbf{A}_{([n])}^\top)=\sqrt{\sum_{k=\mu_n+1}^{\min(I_1\dots I_n,I_{n+1}\dots I_N)}\sigma_i(\mathbf{A}_{([n])})^4},
\end{equation*}
where $\sigma_i(\mathbf{A}_{([n])})$ is the $i$-th singular value of $\mathbf{A}_{([n])}$.
\end{theorem}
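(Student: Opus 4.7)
My plan is to assemble the bound by chaining four ingredients that are already in place: the telescoping inequality \eqref{RTT:eqnadd1}, the rank-based reduction \eqref{RTT-eqn4}, the decomposition of Lemma~\ref{RTT-lem5}, and the probabilistic single-step estimates of Lemma~\ref{RTT-lem6} and Theorem~\ref{RTT-thm6}. The structure mirrors the proof of Theorem~\ref{RTT:thm13}, but with the role of $\mathbf{A}_n$ played by $\mathbf{A}_n\mathbf{A}_n^\top$, in order to accommodate the $q=1$ power iteration baked into Algorithm~\ref{RTT:alg2} via the sketch $\mathbf{Z}_n=\mathbf{A}_n\mathbf{A}_n^\top\mathbf{\Omega}_n$.

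\textbf{Execution.} First I would invoke \eqref{RTT:eqnadd1} to bound $\|\mathcal{A}-\mathcal{Q}_1\times_{2}^{1}\mathcal{Q}_2\times_{3}^{1}\cdots\times_{3}^{1}\mathcal{Q}_N\|_F$ by $\sum_{n=1}^{N-1}\|\mathbf{A}_n-\mathbf{Q}_n\mathbf{Q}_n^\top\mathbf{A}_n\|_F$. For each summand I apply \eqref{RTT-eqn4} to move the problem onto the Gram matrix, bounding $\|\mathbf{A}_n-\mathbf{Q}_n\mathbf{Q}_n^\top\mathbf{A}_n\|_F$ by a factor involving $\mathrm{rank}(\mathbf{A}_n)$ times $\|\mathbf{A}_n\mathbf{A}_n^\top-\mathbf{Q}_n\mathbf{Q}_n^\top\mathbf{A}_n\mathbf{A}_n^\top\|_F$. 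Lemma~\ref{RTT-lem5}, applied with $\mathbf{B}_n=\mathbf{A}_n\mathbf{A}_n^\top$, then splits this Gram-matrix error into an ``approximate column space'' term $\|\mathbf{A}_n\mathbf{A}_n^\top\mathbf{\Omega}_n\mathbf{F}_n-\mathbf{A}_n\mathbf{A}_n^\top\|_F$ and a ``sketch quality'' term $\|\mathbf{F}_n\|_2\|\mathbf{Q}_n\mathbf{R}_n-\mathbf{A}_n\mathbf{A}_n^\top\mathbf{\Omega}_n\|_F$. I bound the first pair via Theorem~\ref{RTT-thm6} and the second via Lemma~\ref{RTT-lem6}; collecting the one leading plus the two correction contributions yields exactly the factor $\gamma_n$ multiplying $\Delta_{\mu_n+1}(\mathbf{A}_n\mathbf{A}_n^\top)$ that appears in the theorem. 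Each single-step event fails with probability at most $\exp(-a_n'\mu_{n-1}I_n)+\exp(-b_n'(\mu_n+R))$, and a union bound across $n=1,\dots,N-1$ produces the joint failure probability claimed.

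\textbf{Closing the loop and main obstacle.} The last step is to replace the internal matrices $\mathbf{A}_n$ (which depend on the previously drawn random data) by the fixed unfoldings $\mathbf{A}_{([n])}$. Unfolding the recursion $\mathbf{A}_n=\mathrm{reshape}(\mathbf{Q}_{n-1}^\top\mathbf{A}_{n-1},[\mu_{n-1}I_n,I_{n+1}\cdots I_N])$ shows inductively that $\mathbf{A}_n$ is obtained from $\mathbf{A}_{([n])}$ by a row-compression through an orthonormal factor built from $\mathbf{Q}_1,\dots,\mathbf{Q}_{n-1}$; such a compression cannot increase singular values, so $\mathrm{rank}(\mathbf{A}_n)\leq\mathrm{rank}(\mathbf{A}_{([n])})$ and $\Delta_{\mu_n+1}(\mathbf{A}_n\mathbf{A}_n^\top)\leq \Delta_{\mu_n+1}(\mathbf{A}_{([n])}\mathbf{A}_{([n])}^\top)$, which is the same Hochstenbach-type comparison invoked after Theorem~\ref{RTT-thm1}. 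The main obstacle is precisely this bookkeeping: because the $\mathbf{A}_n$ depend on all earlier random draws, the per-step failure events are not independent, so one must be careful to argue in terms of a single joint good event controlled by the union bound, and to phrase the deterministic singular-value comparison in a way that survives conditioning on that good event.
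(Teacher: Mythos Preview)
Your proposal is correct and follows exactly the approach the paper takes: the paper's proof is literally the one-line assertion ``by combining \eqref{RTT:eqnadd1}, \eqref{RTT-eqn4}, Lemmas~\ref{RTT-lem5} and~\ref{RTT-lem6}, and Theorem~\ref{RTT-thm6}, we have the following theorem,'' and your write-up spells out that chain in the same order. Your closing paragraph on the singular-value comparison and the conditioning issue is in fact more careful than what the paper records.
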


\subsection{Comparison of the upper bounds of TT-SVD and the proposed algorithms}
\label{RTT:sec4}
For a given desired TT-rank $\{\mu_1,\mu_2,\dots,\mu_{N-1}\}$, suppose that $\mathcal{B}$ is an approximation of the TT decomposition of $\mathcal{A}\in\mathbb{R}^{I_1\times I_2\times\dots\times I_N}$, which is obtained from TT-SVD, the algorithm in \cite{huber2017a}, and Algorithms \ref{RTT:alg3} and \ref{RTT:alg2}. For TT-SVD, as shown in \cite{oseledets2010tt}, we have that
\begin{equation*}
    \|\mathcal{A}-\mathcal{B}\|_F\leq\sqrt{\sum_{n=1}^{N-1}\sum_{i_n=\mu_n+1}^{\widehat{I}_n}\sigma_{i_n}(\mathbf{A}_{([n])})^2}\leq\sum_{n=1}^{N-1}\Delta_{\mu_n+1}(\mathbf{A}_{([n])}),
\end{equation*}
where the second inequality holds with the fact that $\sqrt{a+b}\leq \sqrt{a}+\sqrt{b}$ with $a,b\geq 0$.

According to \cite[Theorem 2]{huber2017a}, we have that
\begin{equation*}
    \|\mathcal{A}-\mathcal{B}\|_F\leq \sum_{n=1}^{N-1}\eta_n\Delta_{\mu_n+1}(\mathbf{A}_{(n)})
\end{equation*}
with the probability at least $1-\sum_{n=1}^{N-1}(5t_n^{-R}+2e^{-u_n^2/2})$, where for $n=1,2,\dots,N-1$, $t_n>1$, $u_n>1$ and
\begin{equation*}
    \eta_n=1+t_n\sqrt{\frac{12\mu_n}{R}}+u_nt_n\frac{e\sqrt{\mu_n+R}}{R+1}.
\end{equation*}

On the one hand, for some suitable choices of all the parameters, the values of $\eta_n$, $\alpha_n$, $\beta_n$ and $\gamma_n$ are larger than 1. This means that the upper bound obtained by TT-SVD is the tightest. On the other hand, for Algorithms \ref{RTT:alg3} and \ref{RTT:alg2}, the values of all the parameters, satisfying the corresponding assumptions, are not unique, which implies that the upper bounds obtained by these algorithms have no obvious relationships.

\section{Randomized algorithms for the fixed precision problem}
\label{randomizedTT:sec5}
Note that Problem \ref{RTT:prob2} is the fixed TT-rank problem for the approximate TT decomposition of a tensor, in which One key assumption is that the desired TT-rank $\{\mu_1,\mu_2,\dots,\mu_{N-1}\}$ is known. However, it is more common to encounter the situation where the desired TT-rank $\{\mu_1,\mu_2,\dots,\mu_{N-1}\}$ is unknown in advance. In this section, we focus on the fixed precision problem for the approximate TT decomposition of a tensor.

To be precise, for a given tensor $\mathcal{A}\in\mathbb{R}^{I_1\times I_2\times \dots \times I_N}$ and a computational tolerance $\epsilon>0$, we seek to determine a tensor $\mathcal{B}$ with low TT-rank such that
\begin{equation*}
    \|\mathcal{A}-\mathcal{B}\|_F\leq \epsilon\|\mathcal{A}\|_F.
\end{equation*}
If there exist $N$ tensors $\mathcal{Q}_n\in\mathbb{R}^{\mu_{n-1}\times I_n\times \mu_n}$ with $\mu_0=\mu_N=1$ such that
\begin{equation}
\label{RTT-eqn5}
    \|\mathcal{A}-\mathcal{Q}_1\times_{2}^1\mathcal{Q}_2\times_{3}^1\dots\times_{3}^1\mathcal{Q}_N\|_F\leq \epsilon\|\mathcal{A}\|_F,
\end{equation}
then we call the $(N-1)$-tuple $\{\mu_1,\mu_2,\dots,\mu_{N-1}\}$ as an $\epsilon$-TT-rank of $\mathcal{A}$.

For each $n$, let $\mathbf{A}_n\in\mathbb{R}^{\mu_{n-1}I_n\times I_{n+1}\dots I_n}$ be the same as in Section \ref{randomizedTT:sec3}. From (\ref{RTT:eqnadd1}), if there exists $\mathbf{Q}_n\in\mathbb{R}^{\mu_{n-1}I_n\times \mu_n}$ such that
\begin{equation}
\label{RTT-eqn6}
    \|\mathbf{A}_n-\mathbf{Q}_n\mathbf{Q}_n^\top\mathbf{A}_n\|_F\leq \frac{\epsilon}{\sqrt{N-1}}\|\mathcal{A}\|_F,
\end{equation}
then (\ref{RTT-eqn5}) is obtained. In this case, we call $\{\mu_1,\mu_2,\dots,\mu_{N-1}\}$ as an approximate $\epsilon$-TT-rank of $\mathcal{A}$. Note that TT-SVD and TT-cross are two common algorithms to estimate the $\epsilon$-TT-rank and the associated TT approximation.
\subsection{Greedy TT-SVD}
For each $n$, let $\widehat{I}_n=\min\{\prod_{k=1}^nI_k,\prod_{k={n+1}}^NI_k\}$. We now give a new strategy to estimate an $\epsilon$-TT-rank with a given tolerance $\epsilon\in(0,1)$, which is summarized in Algorithm \ref{RTT:alggreedy-tt}.
\begin{algorithm}
\caption{Greedy TT-SVD}
\label{RTT:alggreedy-tt}
\begin{algorithmic}[1]
\STATEx {\bf Input:} $\mathcal{A}\in \mathbb{R}^{I_1\times I_2\times \dots \times I_N}$ and $0<\epsilon<1$.
\STATEx {\bf Output:} The $(N-1)$-tuple $\{\mu_1,\mu_2,\dots,\mu_{N-1}\}$.
\STATE Compute $\delta=\frac{\epsilon}{\sqrt{N-1}}\|\mathcal{A}\|_F$.
\STATE Set $\mu_n=1$ for $n=1,2,\dots,N-1$.
\WHILE{$\sum_{n=1}^{N-1}\sum_{i_n=\mu_n+1}^{\widehat{I}_n}\sigma_{i_n}(\mathbf{A}_{([n])})^2\geq \delta^2$}
\STATE Select $1\leq j_0\leq N-1$ such that
\begin{equation*}
j_0={\rm argmax}_{1\leq j\leq N-1}\sigma_{\mu_j}(\mathbf{A}_{([j])}).
\end{equation*}
\STATE Set $\mu_{j_0}=\mu_{j_0}+1$.
\ENDWHILE
\STATE Return $\{\mu_1,\mu_2,\dots,\mu_{N-1}\}$.
\end{algorithmic}
\end{algorithm}

It is obvious to see that for a given $\epsilon>0$, each entry in the $\epsilon$-TT-rank obtained by Algorithm \ref{RTT:alggreedy-tt} is larger than the corresponding one obtained by TT-SVD. Note that if there exist two distinct positive integers $j_0'$ and $j_0''$ such that
\begin{equation*}
\sigma_{\mu_{j_0'}}(\mathbf{A}_{([j_0'])})=\sigma_{\mu_{j_0''}}(\mathbf{A}_{([j_0''])})=\max_{1\leq j\leq N-1}\sigma_{\mu_j}(\mathbf{A}_{([j])}),
\end{equation*}
we set $j_0=\min\{j_0',j_0''\}$.
\begin{remark}
The framework in Algorithm {\rm \ref{RTT:alggreedy-tt}} is similar to that in Ehrlacher {\it et al.} {\rm\cite[Algorithm 3.1]{ehrlacher2021adaptive}}, which investigated an adaptive compression method for tensors based on high-order singular value decomposition.
\end{remark}

\subsection{An efficient adaptive randomized algorithm}
Following an adaptive randomized range finder in \cite{halko2011finding}, Che and Wei \cite{che2019randomized} proposed an adaptive randomized algorithm (denoted by Adap-rand-TT) for finding an approximation of the TT decomposition with a given $\epsilon>0$. Note that for each $n$, the random vector $\mathbf{g}\in\mathbb{R}^{I_{n+1}\dots I_N}$ used in Adap-rand-TT is given by $\mathbf{g}=\mathbf{g}_{n+1}\odot\dots\odot\mathbf{g}_N$, where all the vectors $\mathbf{g}_m\in\mathbb{R}^{I_m}$ are standard Gaussian vectors with $m=n+1,n+2,\dots,N$. For this case, we denote Adap-rand-TT by Adap-rand-TT with KR-Gaussian. Similarly, when the random vector $\mathbf{g}\in\mathbb{R}^{I_{n+1}\dots I_N}$ used in Adap-rand-TT is a standard Gaussian vector, this type of Adap-rand-TT is denoted by Adap-rand-TT with KR-Gaussian.
\begin{remark}
As we know, the random vector $\mathbf{g}\in\mathbb{R}^{I_{n+1}\dots I_N}$ used in the adaptive randomized range finder {\rm\cite{halko2011finding}} can be also replaced by $\mathbf{g}=\mathbf{g}_{1}\odot\dots\odot\mathbf{g}_m$, where all the vectors $\mathbf{g}_i\in\mathbb{R}^{I_i}$ are standard Gaussian vectors with $i=1,2,\dots,m$. Unfortunately, it is unclear that there exists a common way to choose all the $I_i$.
\end{remark}
%. Meanwhile, Alger {\it et al.} \cite{alger2020tensor} presented a method (denoted by Adap-rand-TT with TA) for converting tensors into tensor train format based on actions of the tensor as a vector valued multilinear function.

Note that the error estimator for the adaptive randomized range finder \cite{halko2011finding} often overestimates the approximation error, yielding much larger output matrices than what is necessary. To overcome this issue, several more efficient adaptive randomized range finders for matrices have been proposed in \cite{martinsson2016randomized,yu2018efficient}. For a given matrix $\mathbf{A}\in\mathbb{R}^{I\times J}$ and a computational tolerance $\epsilon>0$, we use Algorithm 2 in \cite{yu2018efficient} and assume that it can be invoked as $$\mathbf{Q}={\rm AdaptRangeFinder}(\mathbf{A},\epsilon,b),$$ where $b$ is a blocking integer to determine how many columns of the random matrix $\mathbf{\Omega}\in\mathbb{R}^{J\times b}$ to draw at a time. The number of columns of $\mathbf{Q}$ is taken to the rank of $\mathbf{Q}\mathbf{Q}^\top\mathbf{A}$.
\begin{remark}
As shown in \cite{yu2018efficient}, the matrix $\mathbf{\Omega}\in\mathbb{R}^{J\times b}$ is a standard Gaussian matrix. Meanwhile, the matrix $\mathbf{\Omega}$ can be given by $\mathbf{\Omega}=\mathbf{\Omega}_1\odot\dots\odot\mathbf{\Omega}_m$, where all the matrices $\mathbf{\Omega}_k\in\mathbb{R}^{J_k\times b}$ are standard Gaussian matrices with $J_1\dots J_m=J$. A key issue for this case is that it is not easy to determine $\{J_1,J_2,\dots,J_m\}$ from the given integer $J$.
\end{remark}

For a given tolerance $\epsilon>0$, by using Algorithm 2 in \cite{yu2018efficient} to find $\mathbf{Q}_n$ such that (\ref{RTT-eqn6}) holds, we obtain a new adaptive randomized algorithm for approximating the TT decomposition of $\mathcal{A}\in\mathbb{R}^{I_1\times I_2\times \dots \times I_N}$, which is summarized in Algorithm \ref{RTT:algadapt}.

\begin{algorithm}
\caption{The adaptive randomized algorithm for TT-SVD}
\label{RTT:algadapt}
\begin{algorithmic}[1]
\STATEx {\bf Input:} $\mathcal{A}\in \mathbb{R}^{I_1\times I_2\times \dots \times I_N}$, $0<\epsilon<1$, the blocking integer $b\geq 1$ and the number of subspace iterations $q\geq 0$.
\STATEx {\bf Output:} An approximation of the TT decomposition of $\mathcal{A}$: $\mathcal{B}=\mathcal{Q}_1\times_2^1\mathcal{Q}_2\times_3^1\dots\times_3^1\mathcal{Q}_N$ and $(N-1)$ positive integers $\{\mu_1,\mu_2,\dots,\mu_{N-1}\}$.
\STATE Let $\mu_0=1$ and compute $\delta=\frac{\epsilon}{\sqrt{N-1}}\|\mathcal{A}\|_F$.
\STATE Set $\mathbf{A}_0=\mathbf{A}_{(1)}$.
\FOR {$n=1,2,\ldots,N-1$}
\STATE Reshape $\mathbf{A}_{n-1}$ as a matrix $\mathbf{A}_n\in\mathbb{R}^{\mu_{n-1}I_n\times \prod_{m=n+1}^NI_m}$.
\STATE Compute $\mathbf{Q}_n={\rm AdaptRangeFinder}(\mathbf{A}_n,\delta,b,q)$.
\STATE  Set $\mu_n$ being the number of the columns of $\mathbf{Q}_n$.
\STATE Reshape $\mathbf{Q}_n$ as a tensor  $\mathcal{Q}_n\in\mathbb{R}^{\mu_{n-1}\times I_n\times \mu_n}$ and update ${\bf A}_{n}={\bf \Sigma}{\bf V}^\top$.
\ENDFOR
\STATE Reshape $\mathbf{A}_{N-1}$ as a tensor  $\mathcal{Q}_N\in\mathbb{R}^{\mu_{N-1}\times I_N}$.
\STATE Form $\mathcal{B}=\mathcal{Q}_1\times_2^1\mathcal{Q}_2\times_3^1\dots\times_3^1\mathcal{Q}_N$.
\end{algorithmic}
\end{algorithm}

By utilizing the structure of $\mathcal{A}$, for each $n$, there exist two ways to generate the matrix $\mathbf{\Omega}_n\in\mathbb{R}^{I_{n+1}\dots I_N\times b}$, which is used in the ``AdaptRangeFinder'' function:
\begin{enumerate}
\item[(a)] the matrix $\mathbf{\Omega}_n$ is a standard Gaussian matrix; and
\item[(b)] the matrix $\mathbf{\Omega}_n$ is given by $\mathbf{\Omega}_n=\mathbf{\Omega}_{n,n+1}\odot\dots\odot\mathbf{\Omega}_{n,N}$, where all the matrices $\mathbf{\Omega}_{n,m}\in\mathbb{R}^{I_m\times b}$ are standard Gaussian matrices with $m=n+1,n+2,\dots,I_N$.
\end{enumerate}

\begin{remark}
Note that in Algorithm \ref{RTT:algadapt}, for each $n$, the blocking integer $b$ may be not the same. For this case, the matrix $\mathbf{Q}_n$ is computed by $\mathbf{Q}_n={\rm AdaptRangeFinder}(\mathbf{A}_n,\delta,b_n,q)$.
\end{remark}

Following the work in \cite{yu2018efficient}, the computational complexity of Algorithm \ref{RTT:algadapt} can be easily obtained and we omit it.

We now consider the accuracy of $\|\mathcal{A}-\mathcal{Q}_1\times_{2}^1\mathcal{Q}_2\times_{3}^1\dots\times_{3}^1\mathcal{Q}_N\|_F^2$ in floating point arithmetic. In floating point arithmetic, the machine precision $\epsilon_{{\rm mach}}$ characterizes the maximum relative error of converting a real number to its floating-point representation, that is, for any $x\in\mathbb{R}$, we have
\begin{equation*}
    |fl_r(x)|\leq \epsilon_{{\rm mach}}.
\end{equation*}

For clarity, let $E=\|\mathcal{A}-\mathcal{Q}_1\times_{2}^1\mathcal{Q}_2\times_{3}^1\dots\times_{3}^1\mathcal{Q}_N\|_F^2$. For each term in the right-hand side of (\ref{RTT:eqnadd1}), we have
\begin{equation*}
    \begin{split}
        \|\mathbf{A}_n-\mathbf{Q}_n\mathbf{Q}_n^\top\mathbf{A}_n\|_F^2
        &={\rm Tr}((\mathbf{A}_n-\mathbf{Q}_n\mathbf{Q}_n^\top\mathbf{A}_n)^\top(\mathbf{A}_n-\mathbf{Q}_n\mathbf{Q}_n^\top\mathbf{A}_n))\\
        &={\rm Tr}(\mathbf{A}_n^\top\mathbf{A}_n-2\mathbf{A}_n^\top\mathbf{Q}_n\mathbf{Q}_n^\top\mathbf{A}_n+\mathbf{A}_n^\top\mathbf{Q}_n\mathbf{Q}_n^\top\mathbf{Q}_n\mathbf{Q}_n^\top\mathbf{A}_n)\\
        &={\rm Tr}(\mathbf{A}_n^\top\mathbf{A}_n-\mathbf{A}_n^\top\mathbf{Q}_n\mathbf{Q}_n^\top\mathbf{A}_n)\\
        &=\|\mathbf{A}_n\|_F^2-\|\mathbf{Q}_n^\top\mathbf{A}_n\|_F^2,
    \end{split}
\end{equation*}
which implies that
\begin{equation*}
    E\leq \sum_{n=1}^{N-1}(\|\mathbf{A}_n\|_F^2-\|\mathbf{Q}_n^\top\mathbf{A}_n\|_F^2).
\end{equation*}
According to the definition of the Frobenius norm of matrices, the relative error of $\|\mathbf{A}_n\|_F^2$ is bounded by $2\epsilon_{{\rm mach}}$. Note that for each $n$, we have $\|\mathbf{Q}_n^\top\mathbf{A}_n\|_F\leq\|\mathbf{A}_n\|_F\leq \|\mathcal{A}\|_F$. Hence, we have
\begin{equation*}
    |fl_r(E)|\leq \sum_{n=1}^{N-1}(|fl_r(\|\mathbf{A}_n\|_F^2)|+|fl_r(\|\mathbf{Q}_n^\top\mathbf{A}_n\|_F^2)|)\leq 4(N-1)\epsilon_{{\rm mach}}\|\mathcal{A}\|_F^2.
\end{equation*}
If we want to guarantee that $|fl_r(E)|\leq \delta E$, we shall enforce $4(N-1)\epsilon_{{\rm mach}}\|\mathcal{A}\|_F^2\leq \delta E$. This means
\begin{equation*}
    \epsilon>\sqrt{E}\geq \sqrt{\frac{4(N-1)\epsilon_{{\rm mach}}}{\delta}}\|\mathcal{A}\|_F.
\end{equation*}
So, we obtain the following theorem for the accuracy of $\|\mathcal{A}-\mathcal{Q}_1\times_{2}^1\mathcal{Q}_2\times_{3}^1\dots\times_{3}^1\mathcal{Q}_N\|_F^2$ in floating point arithmetic.
\begin{theorem}
Suppose that the tensor $\mathcal{A}$ and the tolerance $\epsilon$ are the input to Algorithm \ref{RTT:algadapt}. If $\epsilon>\sqrt{\frac{4(N-1)\epsilon_{{\rm mach}}}{\delta}}\|\mathcal{A}\|_F$, the relative error of $\|\mathcal{A}-\mathcal{Q}_1\times_{2}^1\mathcal{Q}_2\times_{3}^1\dots\times_{3}^1\mathcal{Q}_N\|_F^2$ must be no greater than $\delta$.
\end{theorem}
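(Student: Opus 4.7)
The proof will be a direct floating-point audit of the scalar quantity $E$, leveraging the orthonormality of each $\mathbf{Q}_n$ to replace every squared projection residual by a difference of two Frobenius-norm squares whose roundoff behavior is classical. The overall structure is: (i) rewrite the upper bound on $E$ in a form adapted to backward error analysis, (ii) bound the machine-precision error in each summand, and (iii) translate the resulting absolute bound into the required relative bound via the hypothesis on $\epsilon$.

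First I would derive the pointwise identity
\[
\|\mathbf{A}_n - \mathbf{Q}_n\mathbf{Q}_n^\top \mathbf{A}_n\|_F^2 = \|\mathbf{A}_n\|_F^2 - \|\mathbf{Q}_n^\top \mathbf{A}_n\|_F^2
\]
by expanding the Frobenius inner product ${\rm Tr}((\mathbf{A}_n-\mathbf{Q}_n\mathbf{Q}_n^\top\mathbf{A}_n)^\top(\mathbf{A}_n-\mathbf{Q}_n\mathbf{Q}_n^\top\mathbf{A}_n))$ and using $\mathbf{Q}_n^\top\mathbf{Q}_n = \mathbf{I}_{\mu_n}$. Combining this identity with the squared (Pythagorean) version of the telescoping decomposition underlying \eqref{RTT:eqnadd1} then gives
\[
E \leq \sum_{n=1}^{N-1}\bigl(\|\mathbf{A}_n\|_F^2 - \|\mathbf{Q}_n^\top \mathbf{A}_n\|_F^2\bigr).
\]

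Next I would invoke the standard floating-point estimate that evaluating any Frobenius-norm-squared $\|\mathbf{M}\|_F^2$ satisfies $|fl_r(\|\mathbf{M}\|_F^2)| \leq 2\epsilon_{\rm mach}\|\mathbf{M}\|_F^2$, and couple it with the two elementary norm bounds $\|\mathbf{Q}_n^\top\mathbf{A}_n\|_F \leq \|\mathbf{A}_n\|_F \leq \|\mathcal{A}\|_F$. The first follows from $\mathbf{Q}_n$ having orthonormal columns; the second is proved by induction on $n$, since $\mathbf{A}_n$ is a reshape of $\mathbf{Q}_{n-1}^\top \mathbf{A}_{n-1}$ whose Frobenius norm is non-increasing in $n$ and equals $\|\mathcal{A}\|_F$ at $n=0$. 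Summing yields
\[
|fl_r(E)| \leq \sum_{n=1}^{N-1}\bigl(2\epsilon_{\rm mach}\|\mathbf{A}_n\|_F^2 + 2\epsilon_{\rm mach}\|\mathbf{Q}_n^\top\mathbf{A}_n\|_F^2\bigr) \leq 4(N-1)\epsilon_{\rm mach}\|\mathcal{A}\|_F^2.
\]

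Finally, to guarantee $|fl_r(E)| \leq \delta E$ it suffices to enforce $4(N-1)\epsilon_{\rm mach}\|\mathcal{A}\|_F^2 \leq \delta E$, which is equivalent to $\sqrt{E} \geq \sqrt{4(N-1)\epsilon_{\rm mach}/\delta}\,\|\mathcal{A}\|_F$. The hypothesis $\epsilon > \sqrt{4(N-1)\epsilon_{\rm mach}/\delta}\,\|\mathcal{A}\|_F$ together with the stopping criterion of Algorithm \ref{RTT:algadapt} (which aligns $\sqrt{E}$ with the target tolerance $\epsilon$) then closes the chain and produces the stated relative bound. The main subtlety I anticipate is not any single inequality but the tightening of \eqref{RTT:eqnadd1}: as written, that bound is a triangle inequality on norms, and squaring it naively introduces a spurious factor of $N-1$. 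To obtain the clean additive form for $E$ used in step one, one must go back to the orthogonal-projection decomposition underlying \eqref{RTT:eqnadd1} and apply Pythagoras at each unfolding level rather than the triangle inequality; once this tightening is recorded, the remaining floating-point audit is routine.
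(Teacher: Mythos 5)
Your proposal is correct and follows essentially the same route as the paper: the trace identity $\|\mathbf{A}_n-\mathbf{Q}_n\mathbf{Q}_n^\top\mathbf{A}_n\|_F^2=\|\mathbf{A}_n\|_F^2-\|\mathbf{Q}_n^\top\mathbf{A}_n\|_F^2$, the additive bound $E\leq\sum_{n=1}^{N-1}(\|\mathbf{A}_n\|_F^2-\|\mathbf{Q}_n^\top\mathbf{A}_n\|_F^2)$, the per-term roundoff estimate $2\epsilon_{\rm mach}$ combined with $\|\mathbf{Q}_n^\top\mathbf{A}_n\|_F\leq\|\mathbf{A}_n\|_F\leq\|\mathcal{A}\|_F$, and the final condition $4(N-1)\epsilon_{\rm mach}\|\mathcal{A}\|_F^2\leq\delta E$. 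Your closing remark is a genuine improvement rather than a digression: the paper passes from the triangle-inequality bound (\ref{RTT:eqnadd1}) on $\sqrt{E}$ directly to the sum-of-squares bound on $E$ without comment, which as written would incur the spurious factor $N-1$ you identify; your explicit appeal to the Pythagorean (orthogonal-projection) form of the decomposition is exactly what is needed to justify that step.
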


\section{Numerical examples}
\label{randomizedTT:sec6}
We implement our algorithms on a laptop with Intel Core i5-4200M CPU (2.50GHz) and 8GB RAM. The codes are implemented by MATLAB R2016b. Some basic tensor operations are implemented in the MATLAB Tensor Toolbox \cite{bader2017matlab}. For a given TT-rank, we compare the efficiencies of TT-SVD \cite{oseledets2011tensor}, randomized TT-SVD \cite{huber2017a}, PSTT \cite{shi2023parallel}, Algorithm \ref{RTT:alg3} with Gaussian, Algorithm \ref{RTT:alg3} with KR-Gaussian and Algorithm \ref{RTT:alg2} to calculate an approximation of the TT decomposition of a tensor through some examples. For a given tolerance $\epsilon>0$, we compare the efficiency and precision of TT-SVD, TT-cross, Adap-TT \cite{che2019randomized}, Greedy TT-SVD (Algorithm \ref{RTT:alggreedy-tt}) and Algorithm \ref{RTT:algadapt} to estimate the $\epsilon$-TT-rank and its corresponding TT decomposition. Note that TT-SVD and TT-cross are implemented by {\bf tt\_tensor} and {\bf dmrg\_cross} in the MATLAB Toolbox for working with high-dimensional tensors in the TT-format \cite{oseledets2012tt}. For reproducibility, we have shared the Matlab codes of the proposed algorithms on https://github.com/chncml/randomized-TT.

For a given TT-rank $\{\mu_1,\mu_2,\dots,\mu_{N-1}\}$ and $\mu_0=\mu_N=1$, we define the relative error for  $\mathcal{B}=\mathcal{Q}_1\times_3^1\mathcal{Q}_2\times_3^1\dots
\times_3^1\mathcal{Q}_N$ of $\mathcal{A}\in\mathbb{R}^{I_1\times I_2\times\dots\times I_N}$ as
\begin{equation}\label{RTT:eqn2}
{\rm RE}=\|\mathcal{A}-\mathcal{B}\|_F/\|\mathcal{A}\|_F,
\end{equation}
where all the $\mathcal{Q}_n\in\mathbb{R}^{\mu_{n-1}\times I_n\times \mu_n}$ are derived from the proposed algorithms. The FIT value to approximate the tensor $\mathcal{A}$ is defined by ${\rm FIT}=1-{\rm RE}$.
\subsection{The test tensors}
\label{randomizedTT:sec6:1}
In this section, we introduce several test tensors from synthetic and real databases, which will be used in the following sections.
\begin{example}
\label{RTT-exm1}
Similar to {\rm\cite{sun2020low-rank}}, the tensor $\mathcal{A}\in\mathbb{R}^{50\times 50\times 50\times 50\times 50}$ is given as
\begin{equation*}
\mathcal{A}=\mathcal{P}+\frac{\gamma\|\mathcal{P}\|_F}{\sqrt{50^5}}\mathcal{N}
\end{equation*}
with $\mathcal{P}=\mathcal{G}_1\times_2^1\mathcal{G}_2\times_3^1\mathcal{G}_3\times_3^1\mathcal{G}_4\times_3^1\mathcal{G}_5$, where the entries of $\mathcal{G}_1\in\mathbb{R}^{50\times 20}$, $\mathcal{G}_2,\mathcal{G}_3,\mathcal{G}_4\in\mathbb{R}^{20\times 50\times 20}$ and $\mathcal{G}_5\in\mathbb{R}^{20\times 50}$ are Gaussian variables with mean zero and variance unit, and the entries of $\mathcal{N}\in\mathbb{R}^{50\times 50\times 50\times 50\times 50}$ are an independent family of standard normal random variables. Here, we set $\gamma=10^{-4}$.
\end{example}
\begin{example}
\label{RTT-exm2}
We define $\mathcal{B}=\mathcal{P}+\beta \mathcal{N}$, where $\mathcal{P}$ is given in Example \ref{RTT-exm1} and $\mathcal{N}\in\mathbb{R}^{50\times 50\times 50\times 50\times 50}$ is an unstructured perturbation tensor with different noise levels $\beta$. The following signal-to-noise ratio (SNR) measure will be used:
\begin{equation*}
{\rm SNR}\ [{\rm dB}]=10\log\left(\frac{\|\mathcal{B}\|_F^2}
{\|\beta\mathcal{N}\|_F^2}\right).
\end{equation*}
\end{example}
\begin{example}
\label{RTT-exm3}
The following three tensors are obtained by discretizing three smooth functions, respectively, as follows:
\begin{equation*}
\begin{split}
\mathcal{C}(i_1,i_2,i_3,i_4,i_5)&=\sin\sqrt{\sum_{k=1}^5\left(\frac{i_k-1}{I-1}\right)^2},\\
\mathcal{D}(i_1,i_2,i_3,i_4,i_5)&=\frac{I-1}{I+i_1+i_2+i_3+i_4+i_5}
%,\\\mathcal{E}(i_1,i_2,i_3,i_4,i_5)&=\left(1+\sum_{k=1}^5c_i\frac{i_k-1}{I-1}\right)^{-(d+1)}
\end{split}
\end{equation*}
with $i_1,i_2,i_3,i_4,i_5=1,2,\dots,40$.
%where the parameters $\mathbf{c}=(c_1,c_2,c_3,c_4,c_5)$ are drawn uniformly from $[0, 1]$ and then normalized such that $5^h\|\mathbf{c}\|_1 = b$ with $h=2$ and $b=185$.
The type of $\mathcal{C}$ comes from \cite{lestandi2021numerical}, and the type of $\mathcal{D}$ is chosen from \cite{caiafa2010generating}.
%, and the type of $\mathcal{C}$ is chosen from \cite{bigoni2016spectral}.
\end{example}
\begin{example}
    \label{RTT-exm4}
    The Extended Yale Face database B\footnote{\url{http://vision.ucsd.edu/~iskwak/ExtYaleDatabase/ExtYaleB.html}.} contains 5760 single light source images of 10 subjects each seen under 576 viewing conditions (9 poses x 64 illumination conditions). This database has $560$ images that contain the first $12$ possible illuminations of $28$ different people under the first pose, where each image has $480\times 640$ pixels in a grayscale range. This is resized into a $451\times 602\times 540$ tensor $\mathcal{A}_{\rm YaleB-3D}$ with real numbers. We also reshape $\mathcal{A}_{\rm YaleB-3D}$ as another tensor $\mathcal{A}_{\rm YaleB-5D}$ of size $41\times 42\times 43\times 44\times 45$.

    We adopt a hyperspectral image (HSIs) as the second test data. This is a subimage of the Washington DC Mall database\footnote{\url{https://engineering.purdue.edu/biehl/MultiSpec/hyperspectral.html}.} of the size $1280\times 307\times 191$. We then generate two tensors $\mathcal{A}_{{\rm DCmall-3D}}\in\mathbb{R}^{1280\times 307\times 191}$ and $\mathcal{A}_{{\rm DCmall-5D}}\in\mathbb{R}^{35\times 37\times 38\times 39\times 39}$ from this database.

    The third database is the Columbia object image library COIL-100\footnote{COIL-100 can be downloaded from \url{https://www.cs.columbia.edu/CAVE/software/softlib/coil-100.php}.} \cite{nene1996columbia}, which consists of $7200$ color images that contain 100 objects under 72 different rotations. Each image has $128\times 128$ pixels in the RGB color range. We reshape this data to a third-order tensor $\mathcal{A}_{{\rm COIL-3D}}$ of size $1024\times 1152\times 300$. We also reshape $\mathcal{A}_{{\rm COIL-3D}}$ as another tensor $\mathcal{A}_{{\rm COIL-5D}}$ of size $64\times  64\times 50\times 72\times 24$.
\end{example}
\begin{figure}[htb]
\centering
\begin{tabular}{c}
\includegraphics[width=5in, height=1.8in]{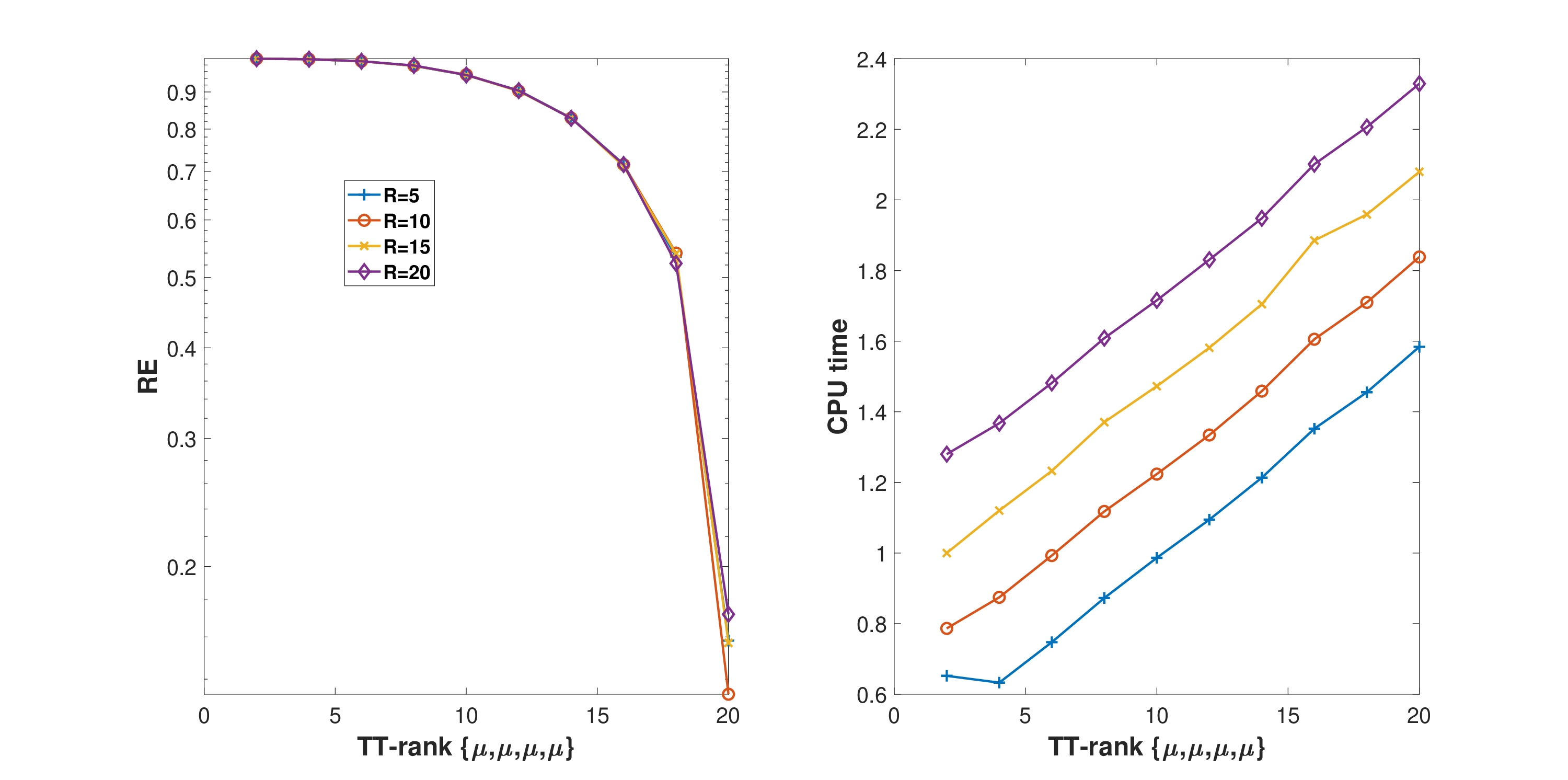}\\
\includegraphics[width=5in, height=1.8in]{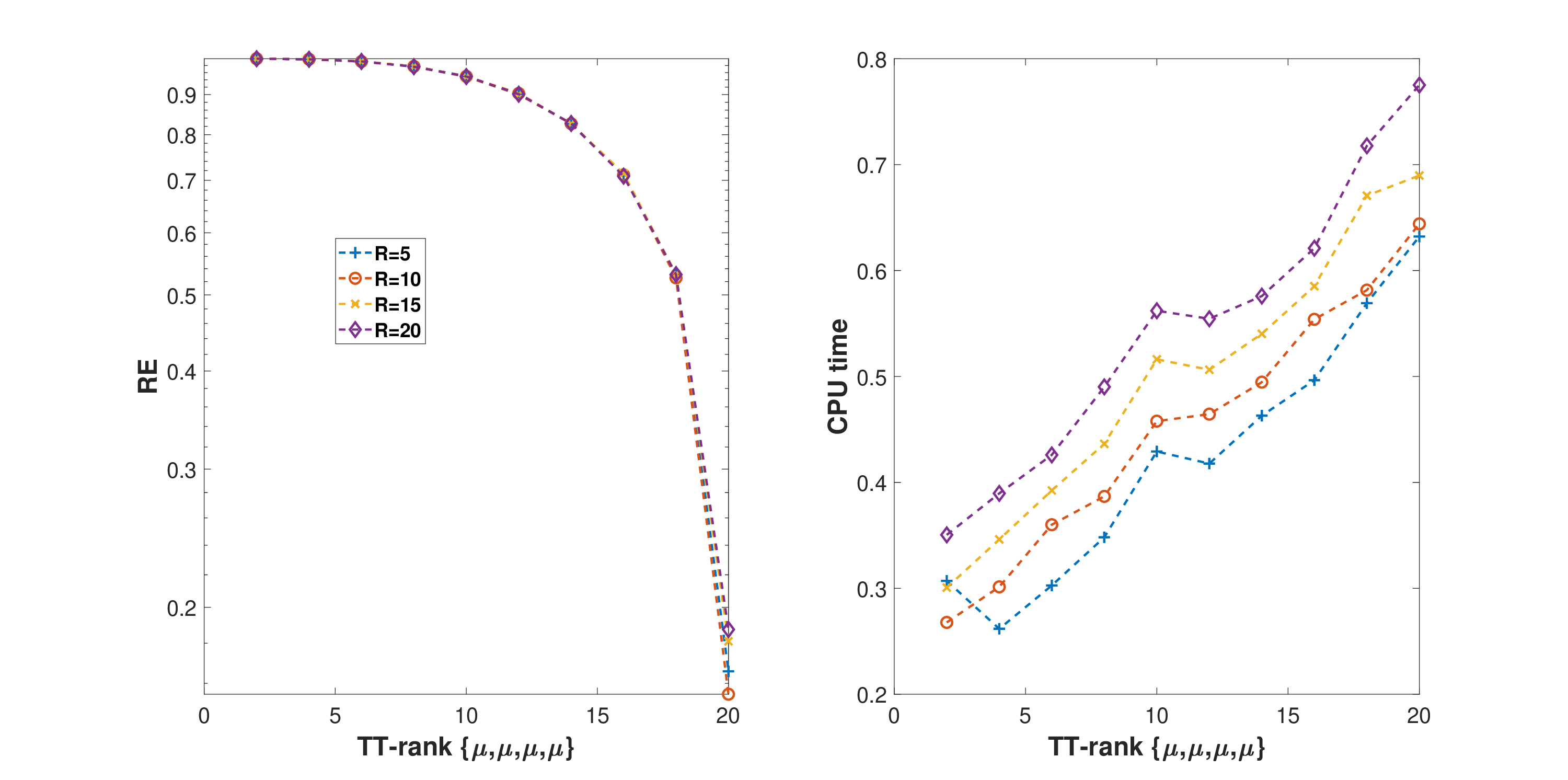}\\
\includegraphics[width=5in, height=1.8in]{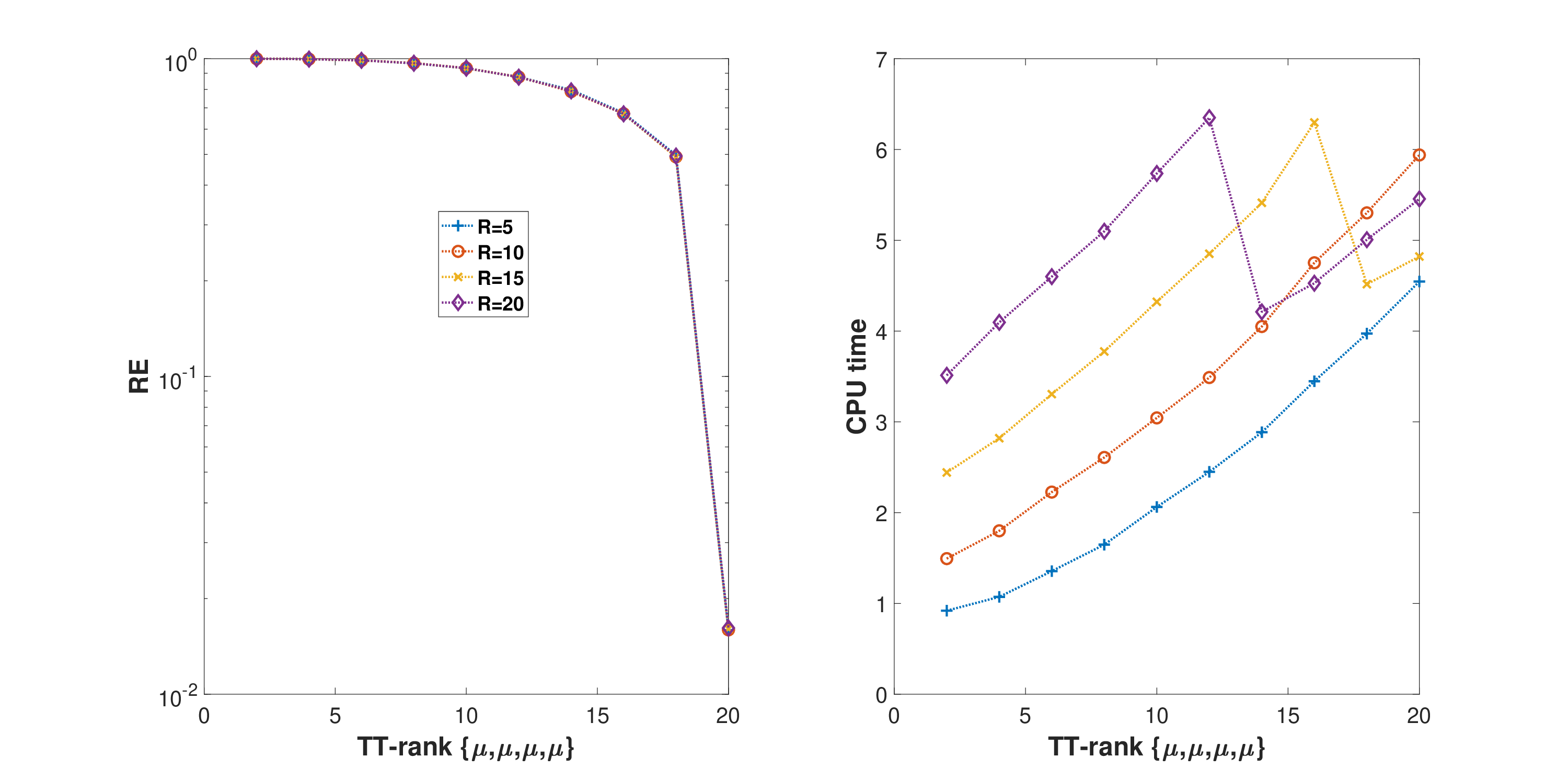}\\
\end{tabular}
\caption{For different oversampling parameters $R$, the results by applying Algorithm \ref{RTT:alg3} with Gaussian (top row), Algorithm \ref{RTT:alg3} with KR-Gaussian (middle row) and Algorithm \ref{RTT:alg2} (bottom row) to $\mathcal{A}$ with $\mu=2,4,6,\dots,20$.}\label{RTT:figadd1}
\end{figure}
\subsection{The choices of $R$ and $q$}
\label{RTT:sec6-1}
Note that there exist two parameters $R$ and $q$ in Algorithms \ref{RTT:alg3} and \ref{RTT:alg2}. From \cite{halko2011finding}, the value of the oversampling parameter $R$ is set to 10, and $q=1,2$ are enough to achieve a good accuracy. in this section, we consider the values of $R$ and $q$ via a test tensor $\mathcal{A}$, given in Example \ref{RTT-exm1}.

We set $q=0$ in Algorithm \ref{RTT:alg3} and $q=1$ in Algorithm \ref{RTT:alg2}. For $R=5,10,15,20$, when we apply Algorithms \ref{RTT:alg3} and \ref{RTT:alg2} with different TT-ranks $\{\mu,\mu,\mu,\mu\}$, the values of RE and CPU time are shown in Figure \ref{RTT:figadd1}. It is shown that for each TT-rank, Algorithms \ref{RTT:alg3} and \ref{RTT:alg2} with $R=5$ is worse than that with $R=10,15,20$, Algorithms \ref{RTT:alg3} and \ref{RTT:alg2} with $R=10,15,20$ are similar, and as $R$ increases, CPU time of Algorithms \ref{RTT:alg3} and \ref{RTT:alg2} also increases. Hence, in the rest, we set $R=10$, which is consistent with the case of randomized algorithms for low-rank approximations \cite{halko2011finding}.

In the case of $R=10$, for different $q$, using Algorithms \ref{RTT:alg3} and \ref{RTT:alg2} with different TT-ranks $\{\mu,\mu,\mu,\mu\}$, Figure \ref{RTT-fig2} illustrates the RE and CPU time values. From this figure, in terms of RE, Algorithm \ref{RTT:alg3} with $q=1,2,3$ are similar and better than that with $q=0$, and Algorithm \ref{RTT:alg2} with $q=1,2,3$ are similar. Hence, in the rest, we set $q=0,1$ for Algorithm \ref{RTT:alg3} and $q=1,2$ for Algorithm \ref{RTT:alg2}.

\begin{figure}[htb]
\centering
\begin{tabular}{c}
\includegraphics[width=5in, height=1.8in]{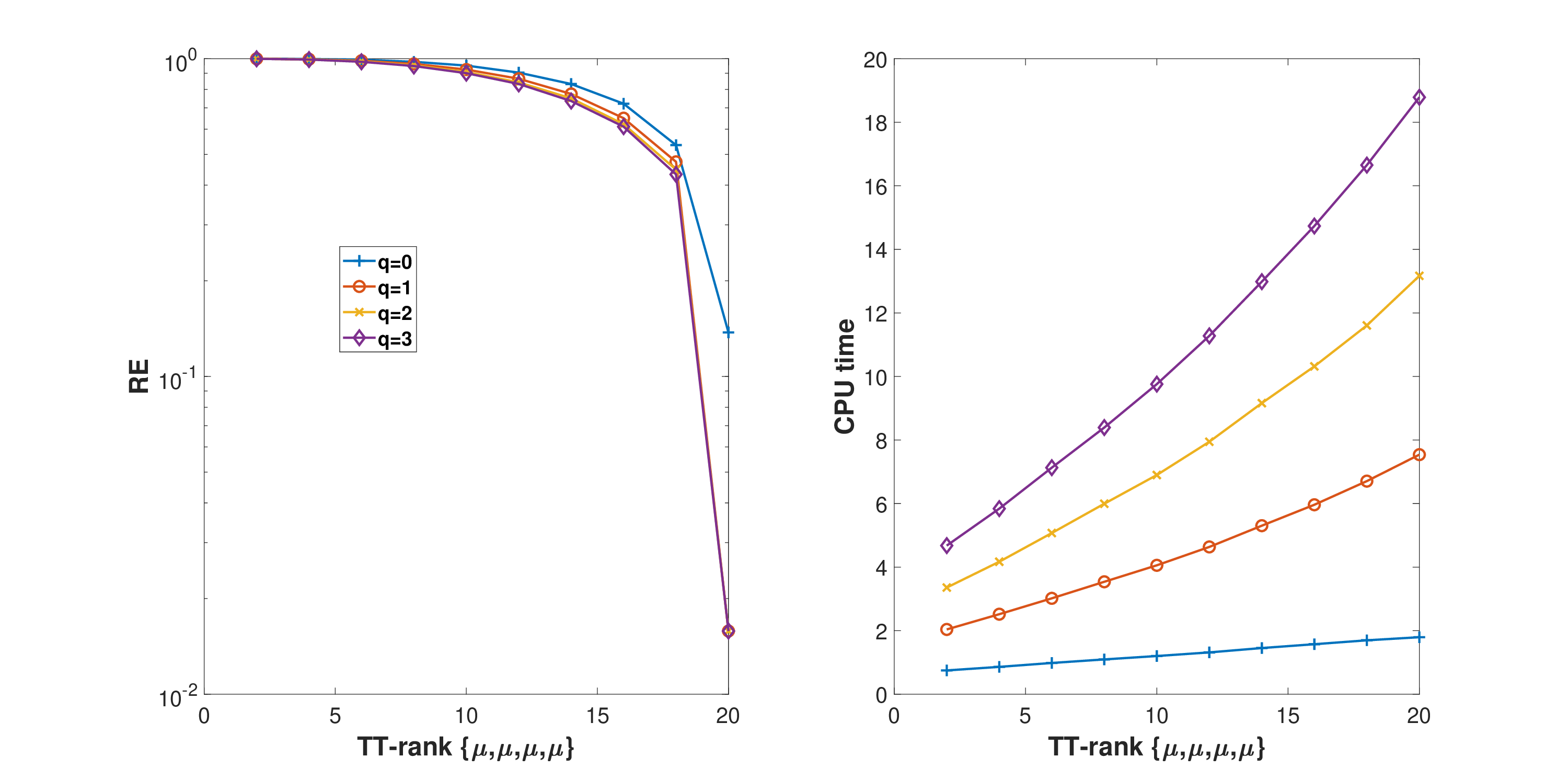}\\
\includegraphics[width=5in, height=1.8in]{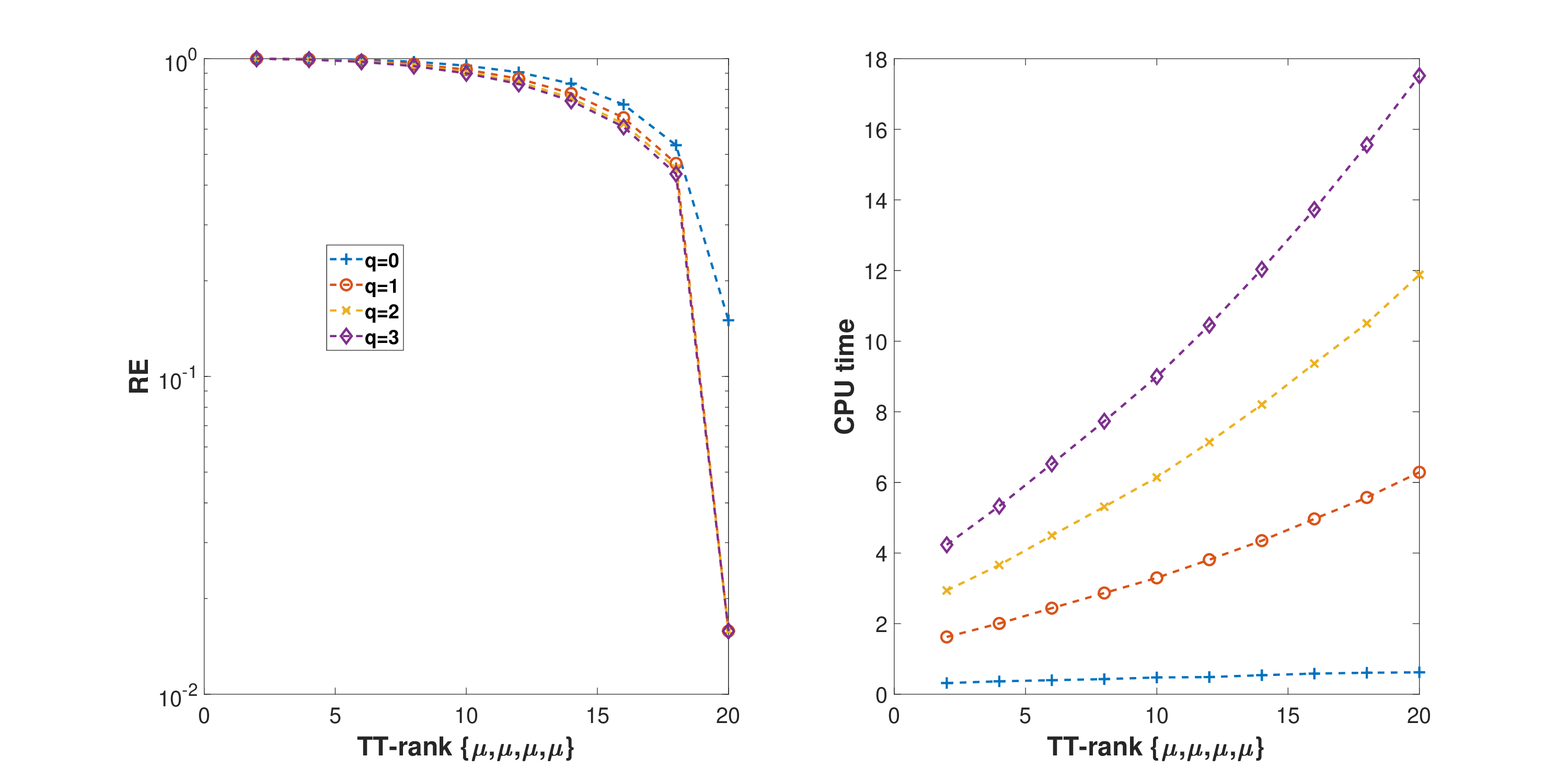}\\
\includegraphics[width=5in, height=1.8in]{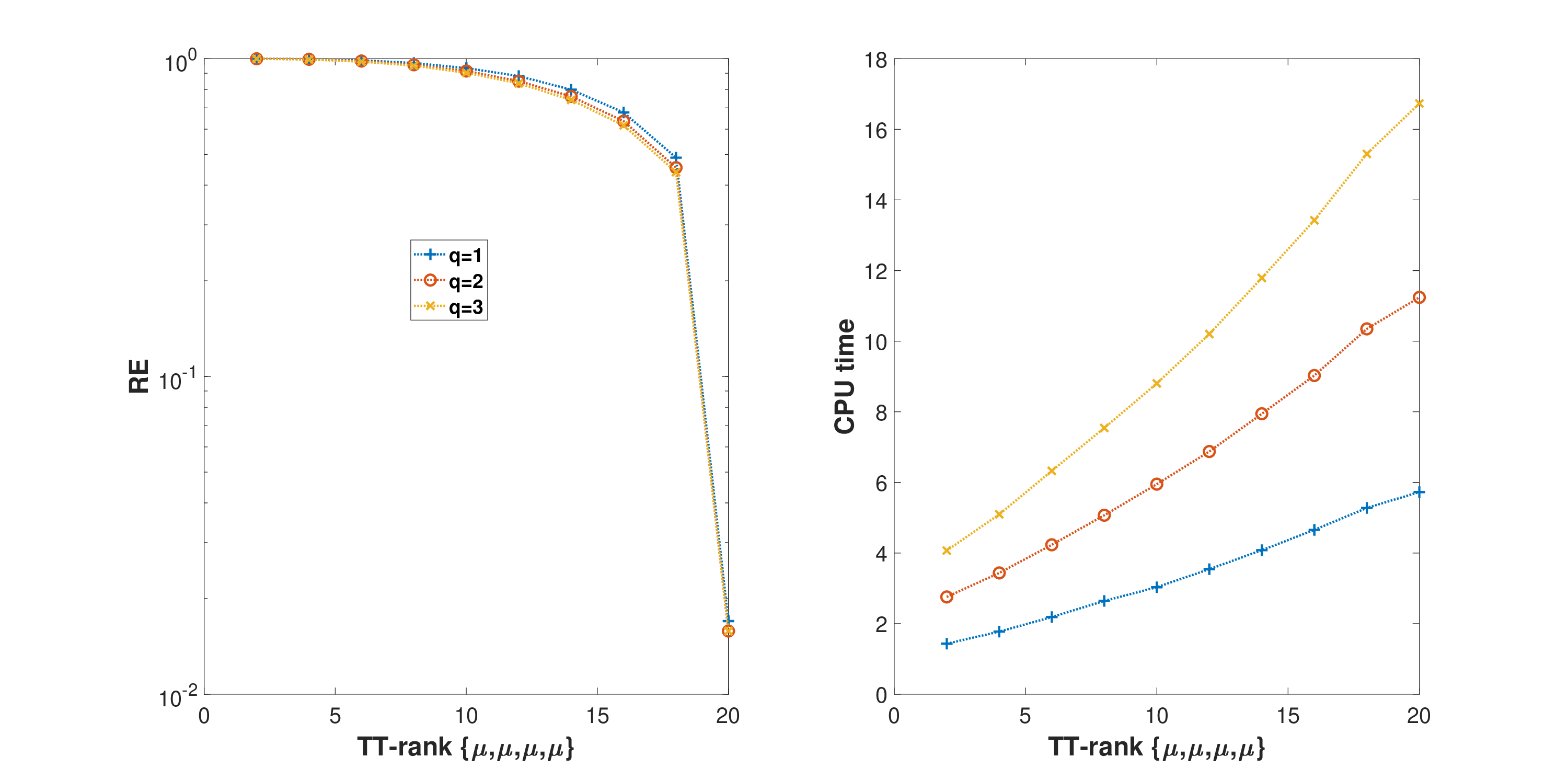}\\
\end{tabular}
\caption{For different $q$, the results by applying Algorithm \ref{RTT:alg3} with Gaussian (top row), Algorithm \ref{RTT:alg3} with KR-Gaussian (middle row) and Algorithm \ref{RTT:alg2} (bottom row) to $\mathcal{A}$ with $\mu=2,4,6,\dots,20$.}\label{RTT-fig2}
\end{figure}

\subsection{Other choices of $\mathbf{G}_n$ in Algorithm \ref{RTT:alg3}}
\label{randomizedTT:sec6:subsec3}

Similar to the work in \cite{che2020the,che2021randomized}, the matrix $\mathbf{\Omega}_n$ in Algorithm \ref{RTT:alg3} can be also generated by $\mathbf{\Omega}_n=\mathbf{\Omega}_{n,n+1}\otimes\dots\otimes\mathbf{\Omega}_{n,N}$, where all the matrices $\mathbf{\Omega}_{n,m}\in\mathbb{R}^{I_m\times L_{n,m}}$ are the standard Gaussian matrices with $L_{n,n+1}\dots L_{n,N}\geq \mu_n+R$. Under this case, Algorithm \ref{RTT:alg3} is denoted by {\it Algorithm \ref{RTT:alg3} with Kron-Gaussian}. Unfortunately, it is unclear that there exists a common way to choose these $L_{n,m}$. Here, one way to choose these $L_{n,m}$ is that: we set $L_{n,n+1}=L_{n,n+2}=\dots=L_{n,N}={\rm ceil}((\mu_n+R)^{n-N})$, where ${\rm ceil}(x)$ rounds the value of $x\in\mathbb{R}$ to the nearest integer towards positive infinity. For this case, we have $L_{n,n+1}\dots L_{n,N}\geq\mu_n+R$ and set $\mathbf{\Omega}_n=\mathbf{\Omega}_n(:,1:\mu_n+R)$.

Similarly to Algorithm \ref{RTT:alg3} with Gaussian, when we replace the standard Gaussian matrices in Algorithm \ref{RTT:alg3} by the SpEmb and SDCT matrices, the related algorithms are, respectively, indicated by {\it Algorithm \ref{RTT:alg3} with SpEmb} and {\it Algorithm \ref{RTT:alg3} with SDCT}. Since the SRFT matrices are suitable for complex-valued matrices and tensors, and for the SRHT matrices, the dimension is restricted to a power of two, we do not consider the case of using the SRFT and SRHT matrices to replace the standard Gaussian matrices in Algorithm \ref{RTT:alg3}.

\begin{figure}[htb]
\centering
\begin{tabular}{c}
\includegraphics[width=5in, height=1.8in]{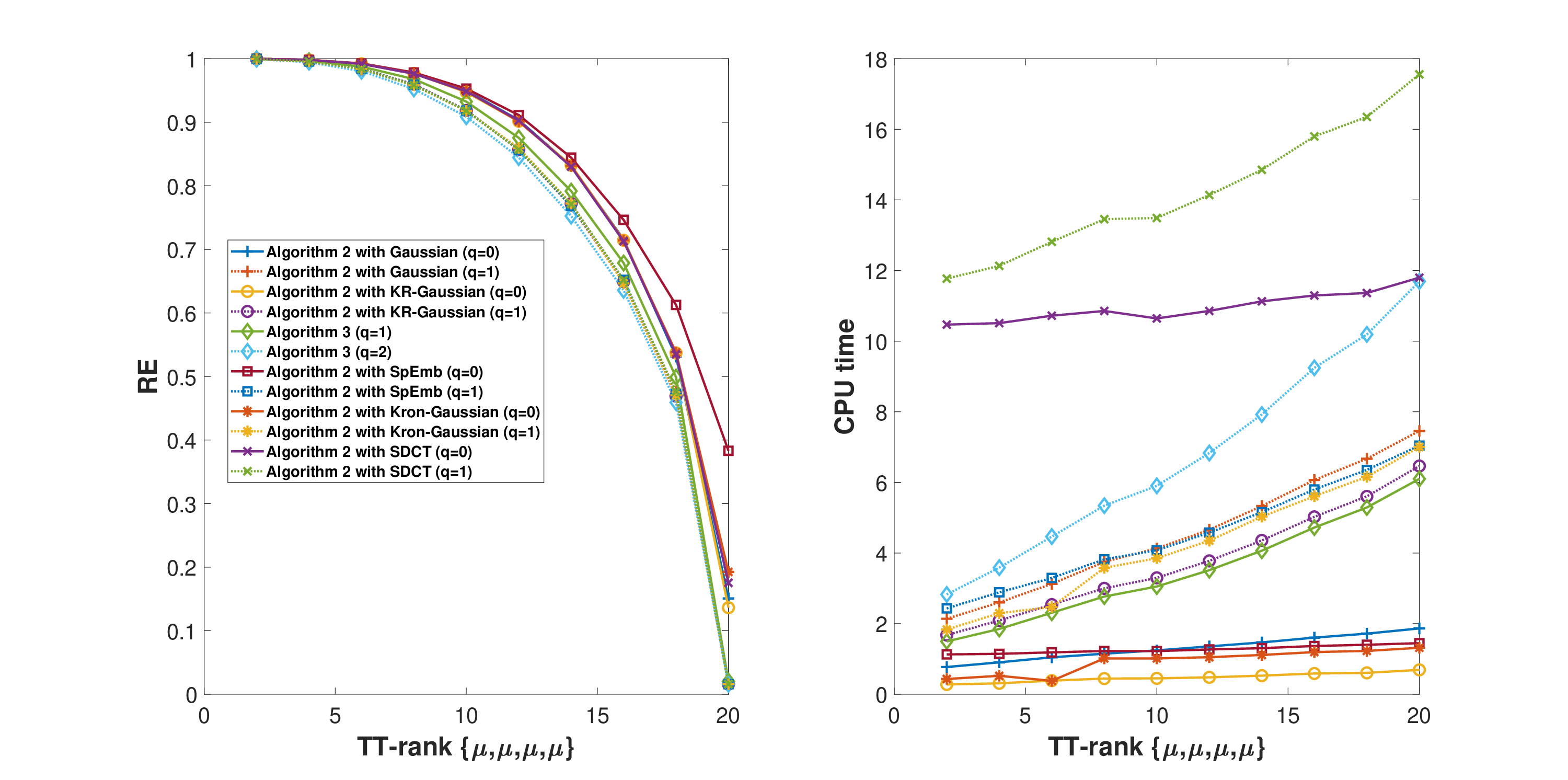}\\
\includegraphics[width=5in, height=1.8in]{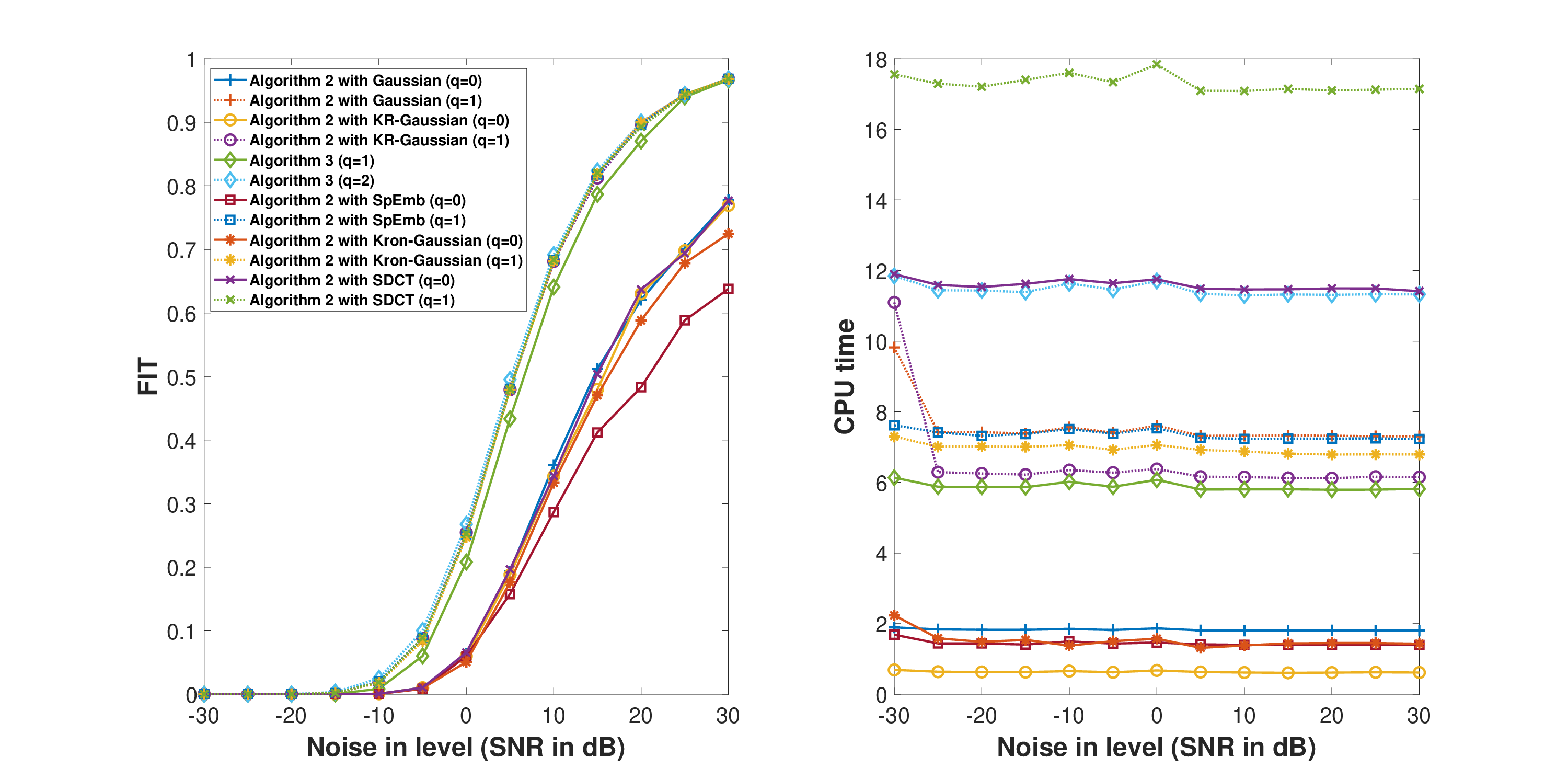}
\end{tabular}
\caption{The results by applying Algorithm \ref{RTT:alg3} with Gaussian, KR-Gaussian, Kron-Gaussian, SpEmb and SDCT, and Algorithm \ref{RTT:alg2} to $\mathcal{A}$ (top row) with $\mu=2,4,6,\dots,20$, and $\mathcal{B}$ (bottom row) with ${\rm SNR}= -30,-25,\dots,25,30$.}\label{RTT-fig3}
\end{figure}

For two test tensors $\mathcal{A}$ and $\mathcal{B}$, given in Example \ref{RTT-exm1} and \ref{RTT-exm2}, respectively, we now compare the accuracy and efficiency of Algorithm \ref{RTT:alg3} with Gaussian, KR-Gaussian, Kron-Gaussian, SpEmb and SDCT, and Algorithm \ref{RTT:alg2}. For the tensor $\mathcal{A}$, its desired TT-rank is denoted by $\{\mu,\mu,\mu,\mu\}$ and the values of $\mu$ are chosen from $\{2,4,\dots,20\}$. For the tensor $\mathcal{B}$, the desired TT-rank $\{\mu,\mu,\mu,\mu\}$ is set to $\{20,20,20,20\}$ and the values of {\rm SNR} are chosen from $\{-30,-25,\dots,25,30\}$.

By applying Algorithms \ref{RTT:alg3} and \ref{RTT:alg2} to $\mathcal{A}$ with different $\mu$ and $\mathcal{B}$ with different SNR, the related results are shown in Figure \ref{RTT-fig3}. It is shown that in terms of RE and FIT, for $q=0$, Algorithm \ref{RTT:alg3} with SpEmb is the worst one, Algorithm \ref{RTT:alg2} with $q=1$ is comparable with all the cases of Algorithm \ref{RTT:alg3} with $q=1$ and Algorithm \ref{RTT:alg2} with $q=2$, and Algorithm \ref{RTT:alg2} with $q=1$ is better than all the cases of Algorithm \ref{RTT:alg3} with $q=0$; in terms of CPU time, Algorithm \ref{RTT:alg3} with KR-Gaussian and $q=0$ is the fastest one, for Algorithm \ref{RTT:alg3}, the case of $q=0$ is faster than the case of $q=1$, and for $q=1$, Algorithm \ref{RTT:alg2} is faster than all the cases of Algorithm \ref{RTT:alg3}.

\subsection{Robustness for the proposed algorithms}
\label{randomizedTT:sec6:4}
When $\mathcal{Q}_n\in\mathbb{R}^{\mu_{n-1}\times I_n\times \mu_n}$ are obtained by applying Algorithms \ref{RTT:alg3} and \ref{RTT:alg2} to $\mathcal{A}\in\mathbb{R}^{I_1\times I_2\times\dots\times I_N}$ with a given TT-rank $\{\mu_1,\mu_2,\dots,\mu_{N-1}\}$ with $\mu_0=\mu_N=1$ and $n=1,2,\dots,N$, then $\mathcal{B}=\mathcal{Q}_1\times_3^1\mathcal{Q}_2\times_3^1\dots
\times_3^1\mathcal{Q}_N$ is a random quantity, which implies that the relative error in (\ref{RTT:eqn2}) is also a random quantity. Hence, a key question is to estimate the values of RE. We use 1000 trials to estimate values of RE. In detail, for $k=1,2,\dots,1000$, let $\mathcal{B}_k\in\mathbb{R}^{I_1\times I_2\times \dots\times I_N}$ and ${\rm RE}_k=\|\mathcal{A}-\mathcal{B}_k\|_F/\|\mathcal{A}\|_F$. For the tensor $\mathcal{A}$ in Example \ref{RTT-exm1}, the mean and standard deviation of $\{{\rm RE}_1,{\rm RE}_2,\dots,{\rm RE}_{1000}\}$ is shown in Figure \ref{RTT-figadd}. It follows from this figure that the proposed algorithms are robust.

\begin{remark}
    In this section, we illustrate the robustness of Algorithms \ref{RTT:alg3} and \ref{RTT:alg2} via the tensor $\mathcal{A}$ in Example \ref{RTT-exm1}. In the future, to ensure the existing and the proposed randomized algorithms for TT decomposition safely, we will propose an unbiased estimator for RE and a resampling method to estimate the variance of the output of Algorithms \ref{RTT:alg3} and \ref{RTT:alg2}. For the matrix case, Lopes {\it et al.} \cite{lopes2018error} proposed a bootstrap method to compute a posteriori error estimates for randomized least-squares algorithms. Lopes {\it et al.} \cite{lopes2019bootstrap} developed a bootstrap method for directly estimating the error of an approximate matrix product. Lopes {\it et al.} \cite{lopes2020error} develops a fully data-driven bootstrap method that numerically estimates the actual error of sketched singular vectors/values. Epperly and Tropp \cite{epperly2022efficient} proposed a leave-one-out error estimator for randomized low-rank approximations and a jacknife resampling method to estimate the variance of the output of the randomized SVD.
\end{remark}

\begin{figure}[htb]
\centering
\includegraphics[width=5in, height=2in]{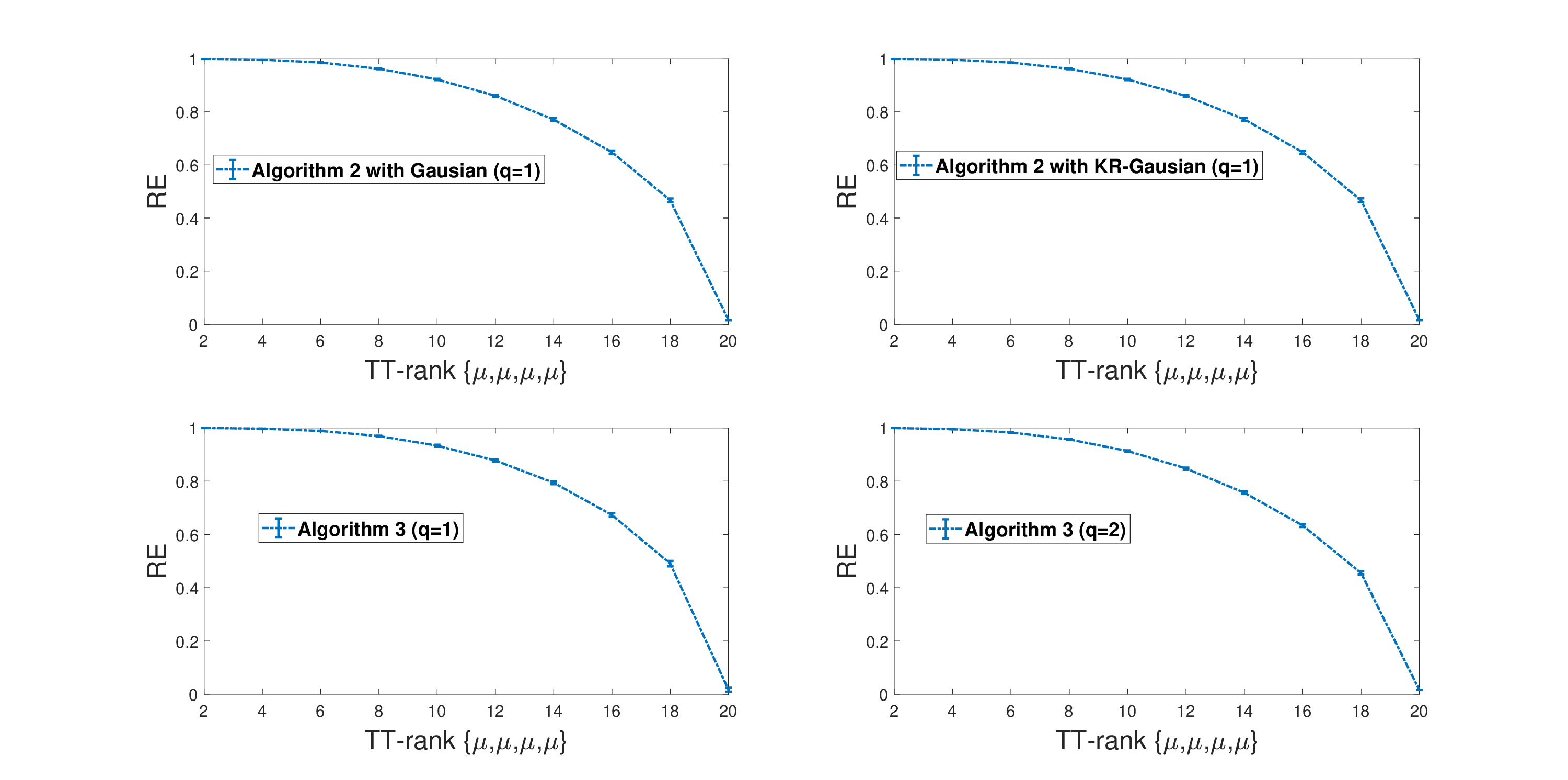}
\caption{The results by applying Algorithm \ref{RTT:alg3} with Gaussian, KR-Gaussian, and Algorithm \ref{RTT:alg2} to $\mathcal{A}$ with $\mu=2,4,6,\dots,20$. Error bars show one standard deviation.}\label{RTT-figadd}
\end{figure}

\subsection{Comparison with TT-SVD and PSTT}
\label{randomizedTT:sec6:5}
In this section, we first compare the accuracy of TT-SVD, PSTT, Algorithm \ref{RTT:alg3} with Gaussian and KR-Gaussian, and Algorithm \ref{RTT:alg2} for compressing the tensors from real-world databases (see Example \ref{RTT-exm4}). Note that in Algorithms \ref{RTT:alg3} and \ref{RTT:alg2}, the desired TT-rank is known in advance. Hence, for each tolerance $\epsilon>0$, we use TT-SVD to obtain the desired TT-rank. For this TT-rank, we then use Algorithm \ref{RTT:alg3} with Gaussian, KR-Gaussian and Kron-Gaussian, and Algorithm \ref{RTT:alg2} to find an approximation of the TT decomposition of the tensors $\mathcal{A}_{\rm YaleB-5D}$, $\mathcal{A}_{{\rm DCmall-5D}}$, and $\mathcal{A}_{{\rm COIL-5D}}$. For different $\epsilon>0$, the corresponding TT-rank of these three tensors are given in Table \ref{RTT:tab5}.

\begin{table}[htb]
\scriptsize
\centering
\begin{tabular}{|c|c|c|c|}
\hline
$\epsilon$ & $\mathcal{A}_{\rm YaleB-5D}$ & $\mathcal{A}_{{\rm DCmall-5D}}$ & $\mathcal{A}_{{\rm COIL-5D}}$\\
\hline
0.1 & (14,226,566,44) & (26,732,744,22) & (27,349,903,21) \\
\hline
0.05 & (34,989,1259,45) & (33,1102,884,25) & (56,1755,1600,24) \\
\hline
0.01 & (41,1623,1769,45) & (35,1288,1211,33) & (63,3303,1712,24) \\
\hline
0.005 & (41,1711,1951,45) & (35,1293,1331,36) & (64,3914,1726,24) \\
\hline
0.001 & (41,1721,1977,45) & (35,1295,1477,39) & (64,4060,1728,24) \\
\hline
0.0005 & (41,1722,1980,45) & (35,1295,1497,39) & (64,4091,1728,24) \\
\hline
0.0001 & (41,1722,1980,45) & (35,1295,1516,39) & (64,4096,1728,24) \\
\hline
\end{tabular}
\caption{For different $\epsilon>0$, the corresponding TT-rank is obtained by applying TT-SVD to the tensors $\mathcal{A}_{\rm YaleB-5D}$, $\mathcal{A}_{{\rm DCmall-5D}}$, and $\mathcal{A}_{{\rm COIL-5D}}$.}	
 \label{RTT:tab5}
\end{table}

\begin{figure}[htb]
\centering
\begin{tabular}{c}
\includegraphics[width=5in, height=1.8in]{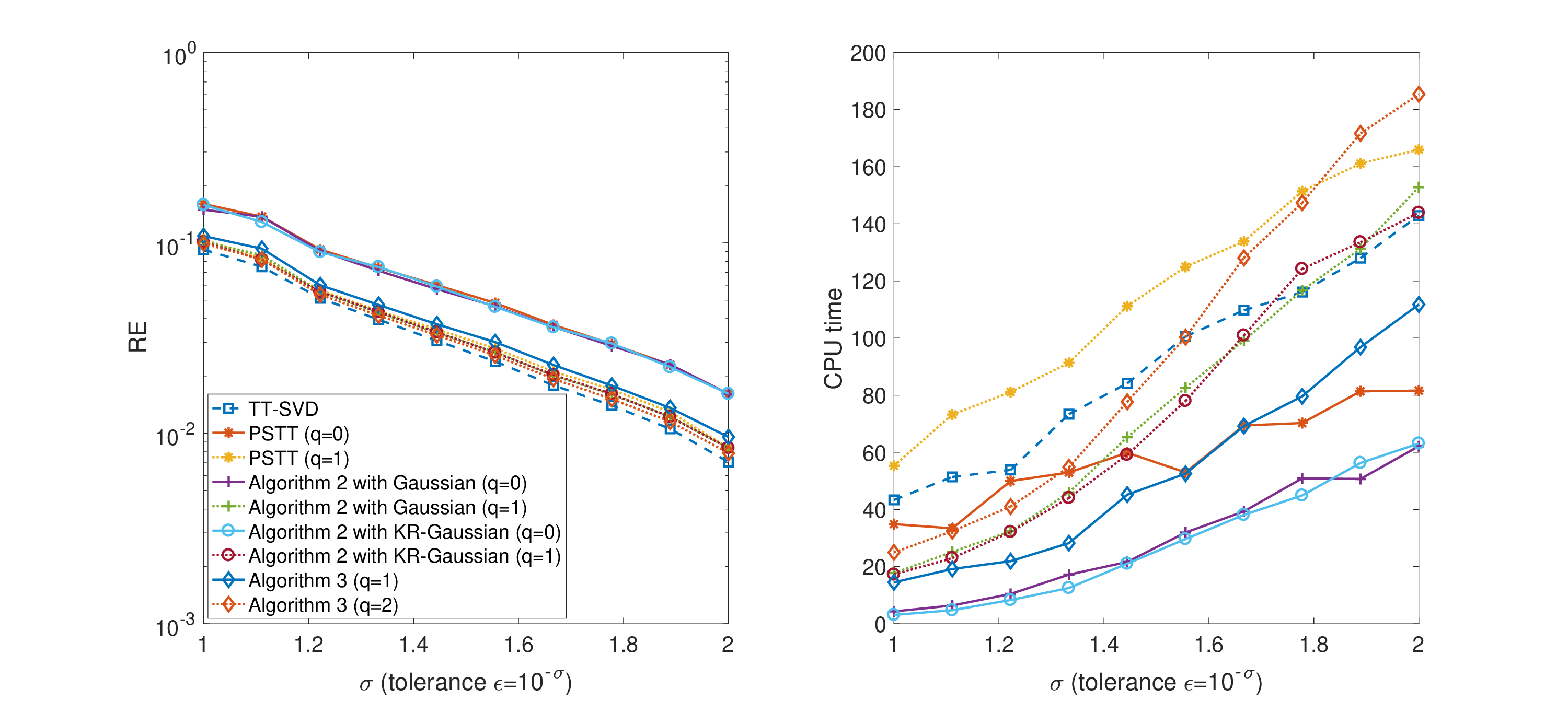}\\
\includegraphics[width=5in, height=1.8in]{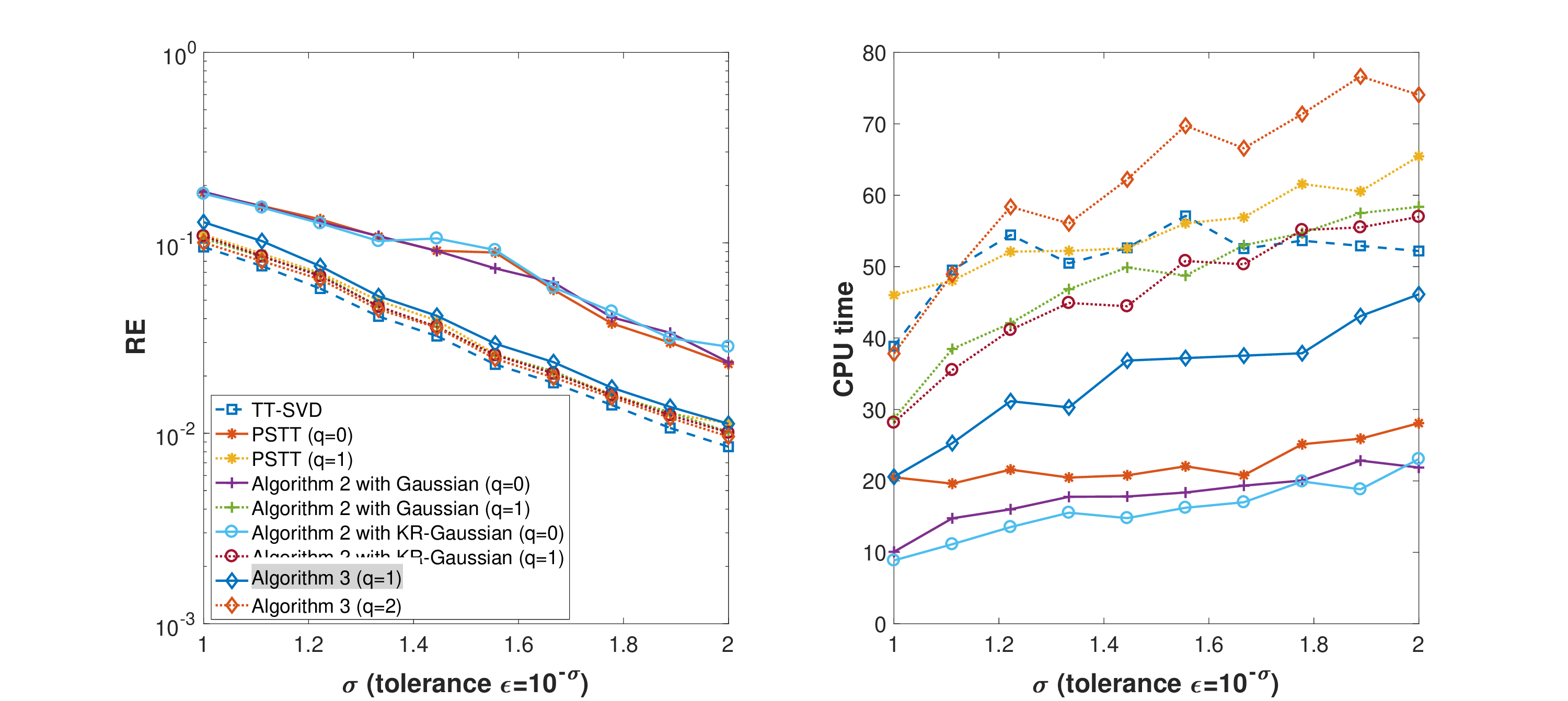}\\
\includegraphics[width=5in, height=1.8in]{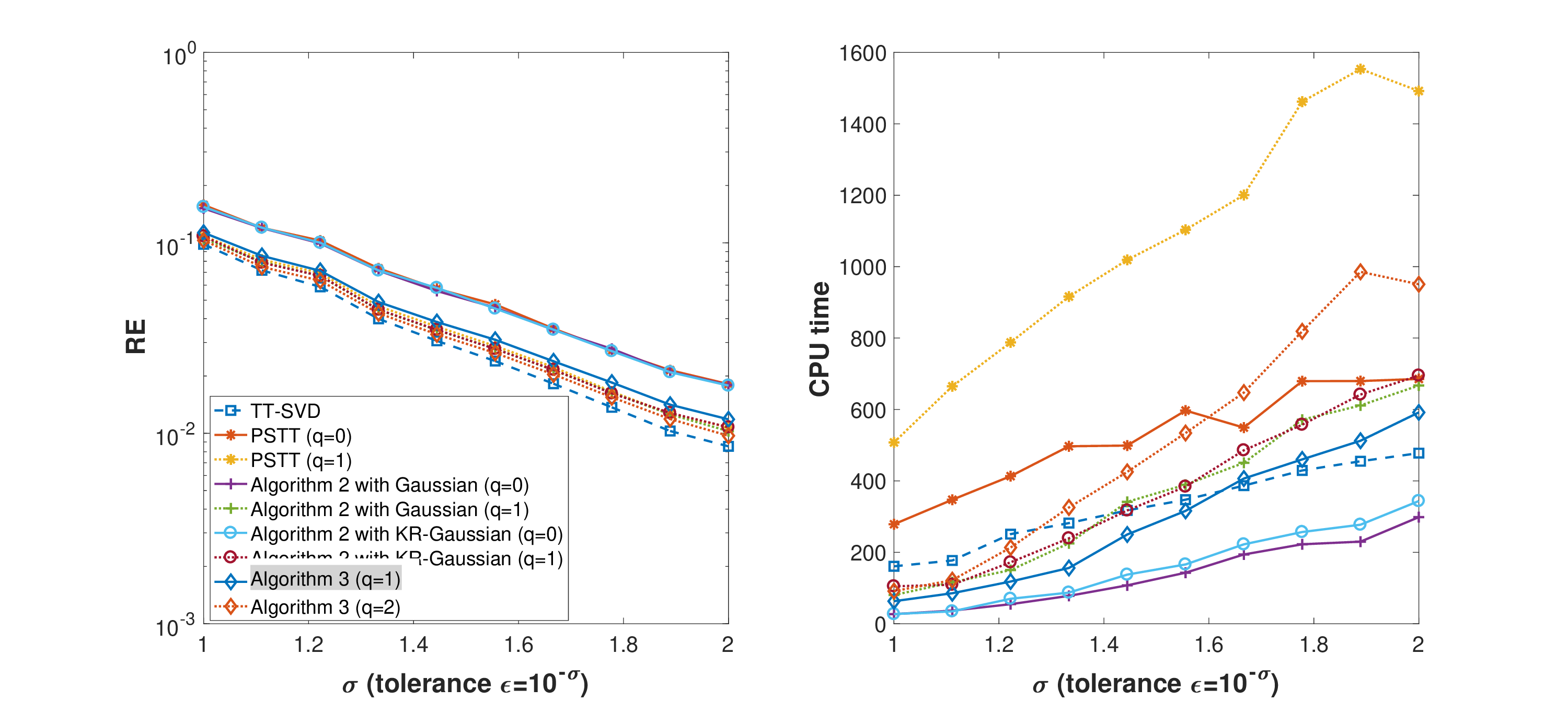}\\
\end{tabular}
\caption{For different $\epsilon>0$, the results by applying TT-SVD, PSTT, Algorithm \ref{RTT:alg3} with Gaussian and KR-Gaussian, and Algorithm \ref{RTT:alg2} to $\mathcal{A}_{\rm YaleB-5D}$ (top row), $\mathcal{A}_{{\rm DCmall-5D}}$ (middle row), and $\mathcal{A}_{{\rm COIL-5D}}$ (bottom row).}\label{RTT-fig4}
\end{figure}

From Table \ref{RTT:tab5}, we see that as $\epsilon$ decreases, each term of the corresponding TT-rank gradually increases, but the growth rate gradually decreases. In particular, as $\epsilon$ decreases, each term of the TT-rank is getting closer to the dimensions of the corresponding unfolding matrix. Hence, we set $\epsilon=10^{-\sigma}$, where $\sigma={\rm linspace}(1,2,10)$. The related results are shown in Figure \ref{RTT-fig4}, which implies that for the Gaussian, KR-Gaussian and Kron-Gaussian cases, in terms of CPU time, Algorithm \ref{RTT:alg3} ($q=0$) is faster than Algorithm \ref{RTT:alg3} ($q=1$), Algorithm \ref{RTT:alg2} ($q=1,2$), TT-SVD and PSTT ($q=1,2$), and Algorithm \ref{RTT:alg2} ($q=1$) is faster than Algorithm \ref{RTT:alg3} ($q=1$), Algorithm \ref{RTT:alg2} ($q=2$), TT-SVD and PSTT ($q=1,2$), and in terms of RE, Algorithms \ref{RTT:alg3} and \ref{RTT:alg2} ($q=1$) are comparable with and slightly worse than TT-SVD, and better than Algorithm \ref{RTT:alg3} ($q=0$).

As shown in Table \ref{RTT:tab5} and Figure \ref{RTT-fig4}, as $\epsilon$ decreases, the difference in CPU time required among TT-SVD, and Algorithms \ref{RTT:alg3} and \ref{RTT:alg2} ($q=1$) is becoming smaller and smaller.

We also consider the accuracy of TT-SVD, PSTT, Algorithm \ref{RTT:alg3} with Gaussian and KR-Gaussian, and Algorithm \ref{RTT:alg2} for the case of smaller TT-ranks. The test tensors $\mathcal{A}$ and $\mathcal{B}$ are given in Example \ref{RTT-exm1} and \ref{RTT-exm2}, respectively. For the tensor $\mathcal{A}$, its desired TT-rank is denoted by $\{\mu,\mu,\mu,\mu\}$ and the values of $\mu$ are chosen from $\{2,4,\dots,20\}$. For the tensor $\mathcal{B}$, the desired TT-rank $\{\mu,\mu,\mu,\mu\}$ is set to $\{20,20,20,20\}$ and the values of {\rm SNR} are chosen from $\{-30,-25,\dots,25,30\}$. The related results are shown in Figure \ref{RTT-fig4add1}, which illustrates that in terms of RE and FIT, Algorithm \ref{RTT:alg3} with $q=1$ and Algorithm \ref{RTT:alg2} with $q=1,2$ are comparable to TT-SVD and Algorithm \ref{RTT:alg3} with $q=0$ is worse than TT-SVD, and in terms of CPU time, Algorithm \ref{RTT:alg3} with $q=0,1$ and Algorithm \ref{RTT:alg2} with $q=1,2$ are faster than TT-SVD.

\begin{figure}[htb]
\centering
\begin{tabular}{c}
\includegraphics[width=5in, height=1.8in]{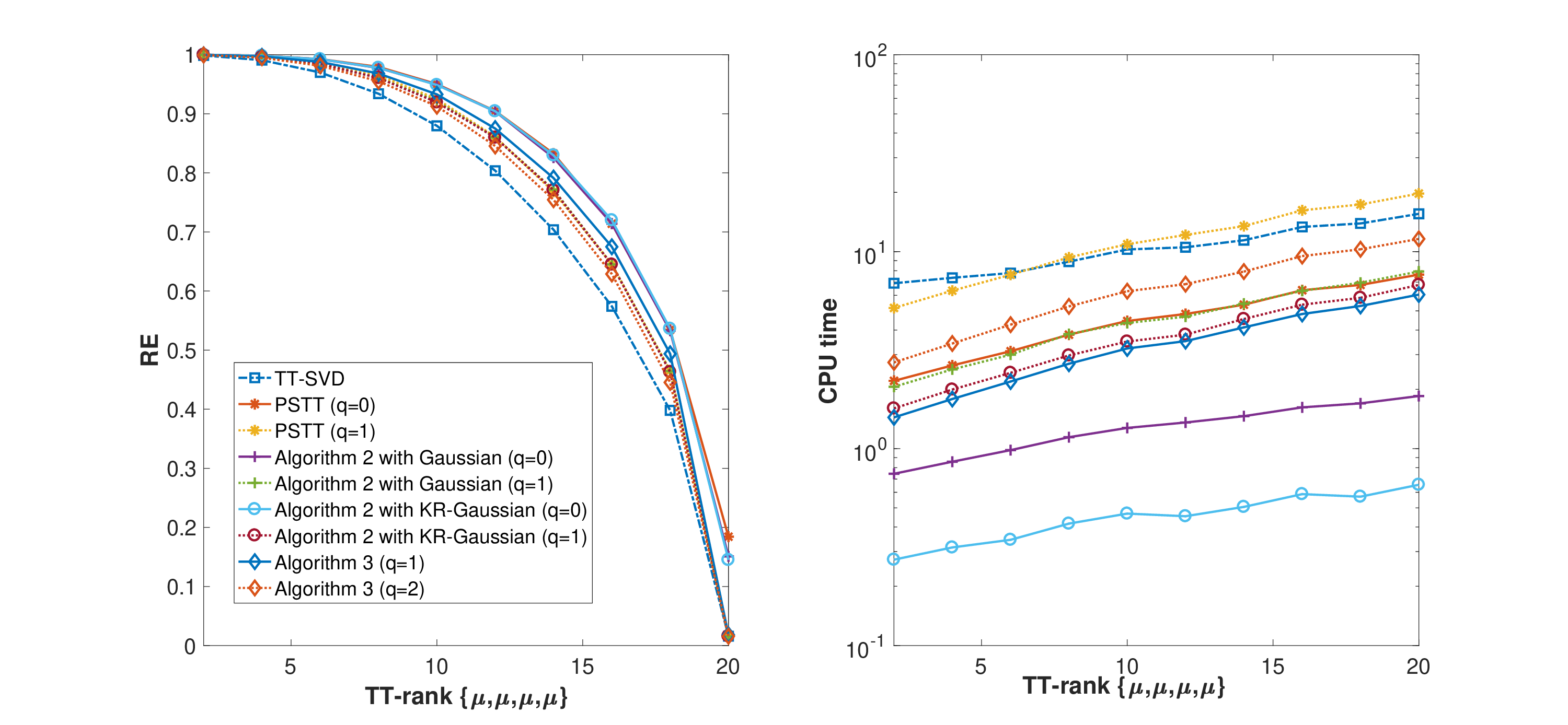}\\
\includegraphics[width=5in, height=1.8in]{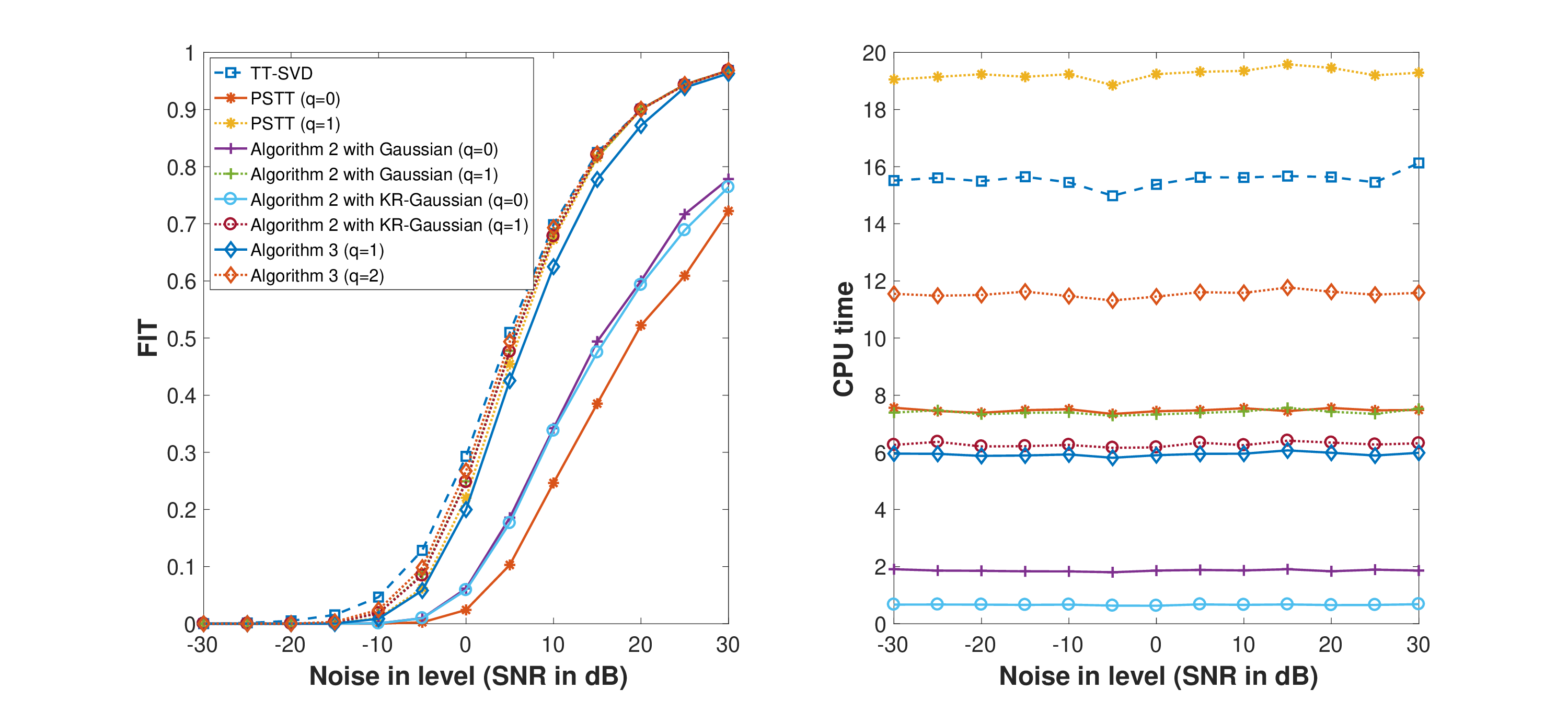}\\
\end{tabular}
\caption{The results by applying TT-SVD, PSTT, Algorithm \ref{RTT:alg3} with Gaussian and KR-Gaussian, and Algorithm \ref{RTT:alg2} to $\mathcal{A}$ (top row) with $\mu=2,4,6,\dots,20$, and $\mathcal{B}$ (bottom row) with ${\rm SNR}= -30,-25,\dots,25,30$.}\label{RTT-fig4add1}
\end{figure}

It is worth noting that T-HOSVD \cite{kolda2009tensor} and ST-HOSVD \cite{vannieuwenhoven2012new} are two common methods to compress data from real-world. For clarity, we compare the accuracy of T-HOSVD, ST-HOSVD, Algorithm \ref{RTT:alg3} with Gaussian, KR-Gaussian and Kron-Gaussian, and Algorithm \ref{RTT:alg2} for compressing the third-order tensors $\mathcal{A}_{\rm YaleB-3D}$, $\mathcal{A}_{{\rm DCmall-3D}}$, and $\mathcal{A}_{{\rm COIL-3D}}$ (see Example \ref{RTT-exm4}). For each tensor, the multilinear rank and TT-rank are denoted by $\{\mu,\mu,\mu\}$ and $\{\mu,\mu\}$, respectively, where $\mu$ is a given positive integer. For $\mathcal{A}_{\rm YaleB-3D}$, we set $\mu$ from 20 to 400 of Step 20. For $\mathcal{A}_{{\rm DCmall-3D}}$ and $\mathcal{A}_{{\rm COIL-3D}}$, we set $\mu$ from 10 to 150 in Step 10.

Figure \ref{RTT-fig5} illustrates that: in terms of CPU time, T-HOSVD is the slowest one, all the cases of Algorithm \ref{RTT:alg3} with $q=0$ are faster than other algorithms, and as $\mu$ increases, the difference between ST-HOSVD and all the cases of Algorithm \ref{RTT:alg3} with $q=0$ is becoming smaller and smaller; and in terms of RE, all the cases of Algorithm \ref{RTT:alg3} with $q=0$ is worse than other algorithms and all the cases of Algorithm \ref{RTT:alg3} with $q=1$ and Algorithm \ref{RTT:alg2} with $q=1$ are comparable to T-HOSVD and ST-HOSVD.

\begin{figure}[hth]
\centering
\begin{tabular}{c}
\includegraphics[width=5in, height=1.8in]{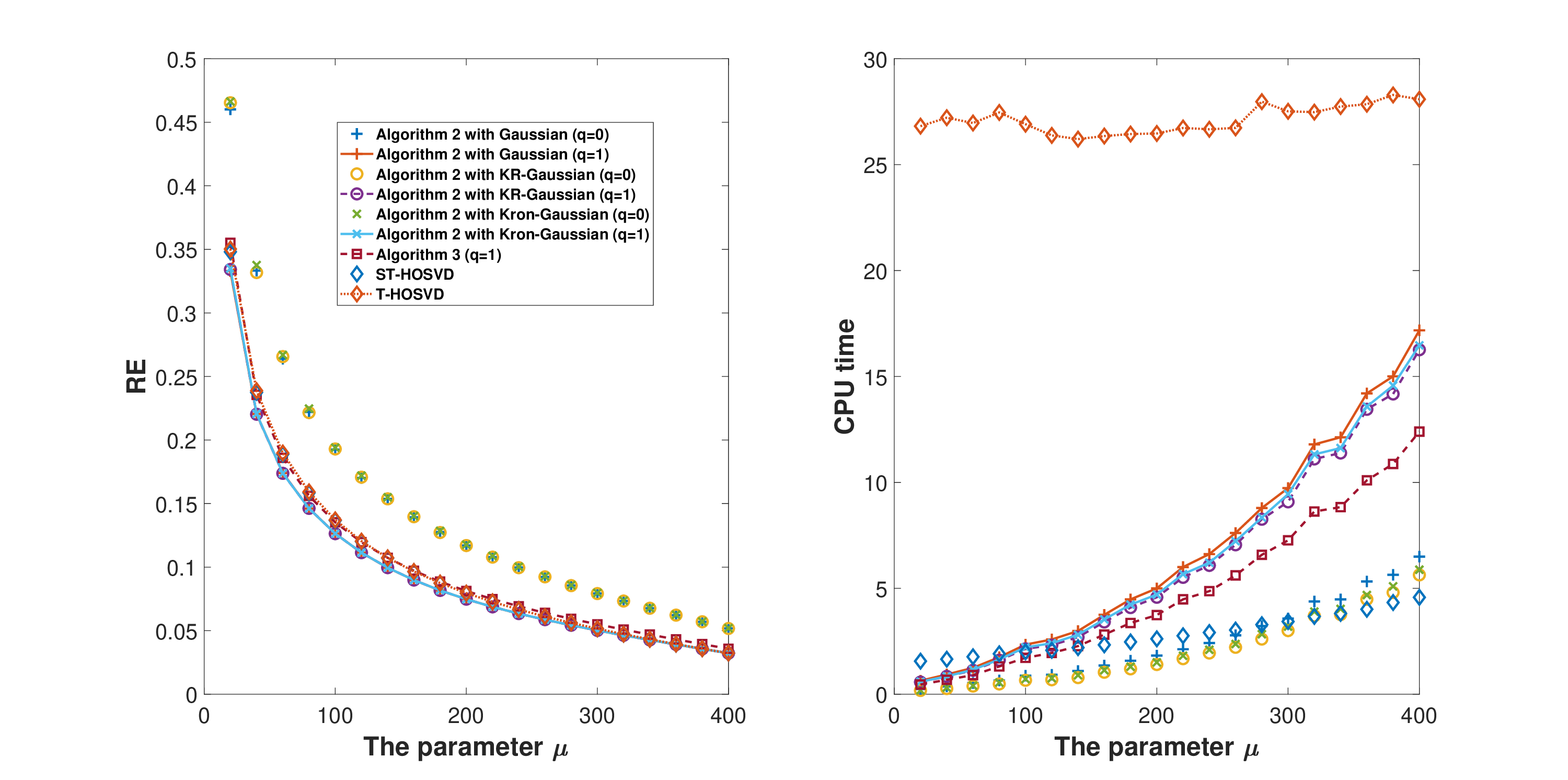}\\
\includegraphics[width=5in, height=1.8in]{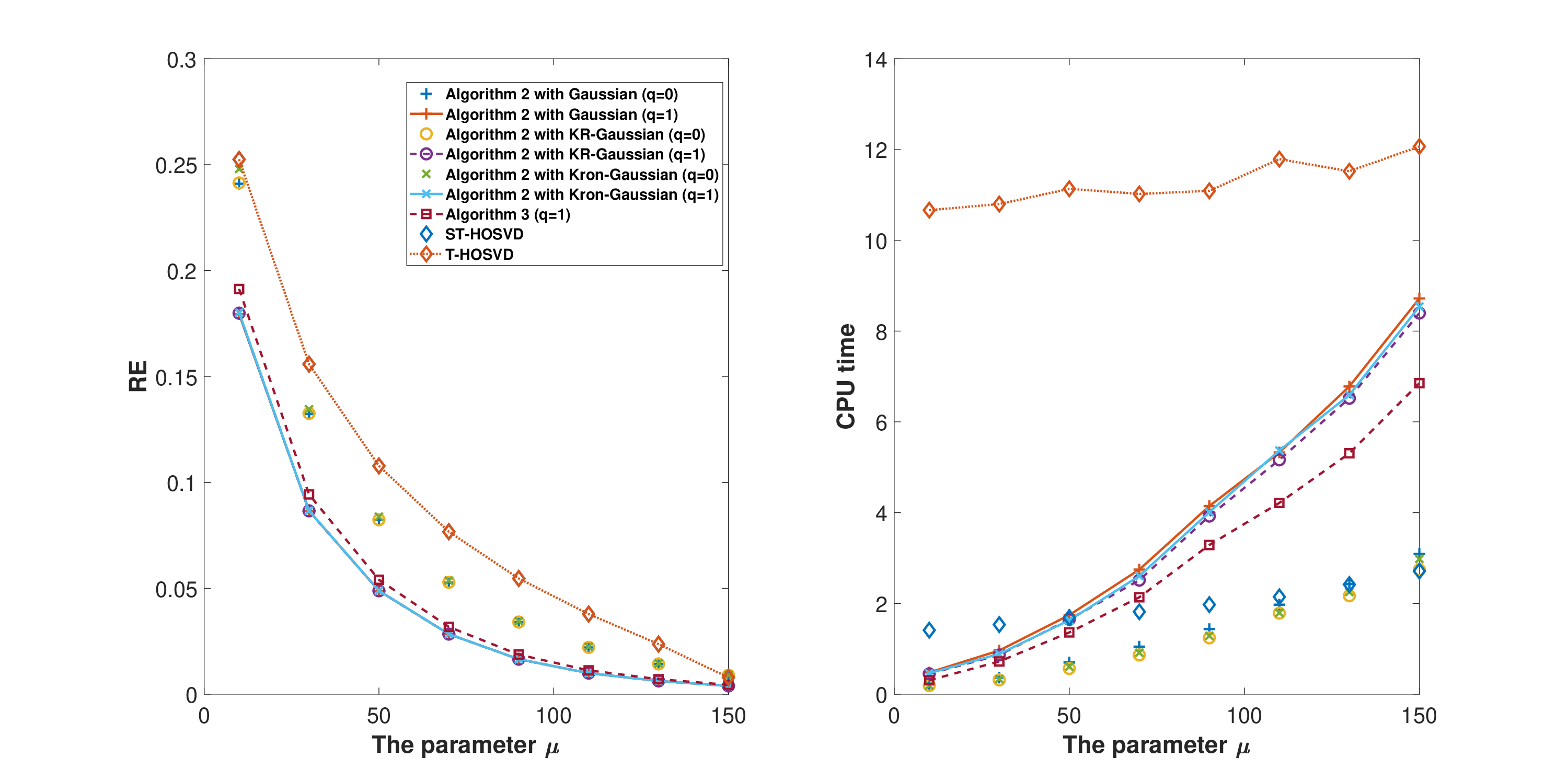}\\
\includegraphics[width=5in, height=1.8in]{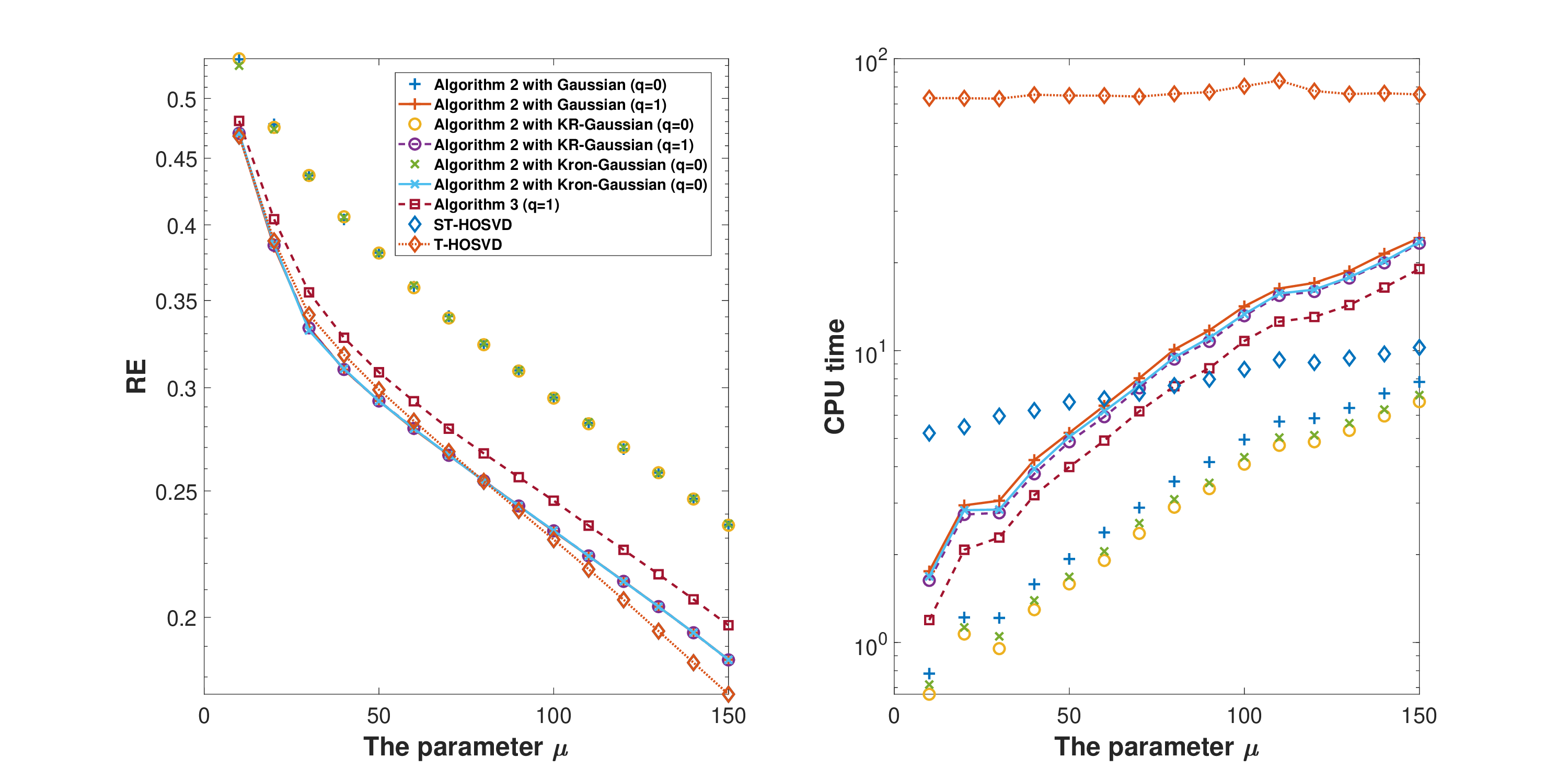}\\
\end{tabular}
\caption{For different $\mu$, the results by applying T-HOSVD, ST-HOSVD, Algorithm \ref{RTT:alg3} with Gaussian, KR-Gaussian and Kron-Gaussian, and Algorithm \ref{RTT:alg2} to $\mathcal{A}_{\rm YaleB-3D}$ (top row), $\mathcal{A}_{{\rm DCmall-3D}}$ (middle row), and $\mathcal{A}_{{\rm COIL-3D}}$ (bottom row).}\label{RTT-fig5}
\end{figure}
\subsection{The case of the fixed-precision problem}
\label{randomizedTT:sec6:6}
For a given tolerance $0<\epsilon<1$ and $\mathcal{A}\in\mathbb{R}^{I_1\times I_2\times \dots\times I_N}$, we first compare Algorithm \ref{RTT:alggreedy-tt} with TT-cross and TT-SVD for estimating the $\epsilon$-TT-rank of $\mathcal{A}$, and we then consider the accuracy of TT-cross, TT-SVD, Adap-rand-TT with Gaussian and KR-Gaussian, and Algorithm \ref{RTT:algadapt} to compute the approximation of TT decomposition and the corresponding $\epsilon$-TT-rank of $\mathcal{A}$.

For different $\epsilon$, by applying TT-cross, TT-SVD, and Algorithm \ref{RTT:alggreedy-tt} to $\mathcal{C}$, $\mathcal{D}$, and $\mathcal{E}$ in Example \ref{RTT-exm3}, the related results are listed in Table \ref{RTT:tab2}, which illustrates that for the same $\epsilon$, each term of the $\epsilon$-TT-rank obtained by Algorithm \ref{RTT:alggreedy-tt} is between that obtained by TT-cross and TT-SVD.
\begin{table}
\scriptsize
\centering
\begin{tabular}{|c|c|ccc|}
\hline
\multirow{2}{*}{Tensors}  &  \multirow{2}{*}{$\epsilon$} & \multicolumn{3}{c|}{$\epsilon$-TT-rank}  \\
\cline{3-5}
& &  TT-cross & TT-SVD & Algorithm \ref{RTT:alggreedy-tt} \\
\hline
\multirow{4}{*}{$\mathcal{C}$} & 1e-2 & (5,5,4,4) & (2,2,2,2) & (3,3,3,3) \\
\cline{2-5}
\multirow{4}{*}{} & 1e-3 & (6,6,5,5) & (3,3,3,3) & (4,5,5,4) \\
\cline{2-5}
\multirow{4}{*}{} & 1e-4 & (8,7,7,7) & (4,5,5,4) & (6,6,6,6) \\
\cline{2-5}
\multirow{4}{*}{} & 1e-5 & (10,10,9,9) & (6,7,7,6) & (7,8,8,8) \\
\hline
\multirow{4}{*}{$\mathcal{D}$} & 1e-2 & (4,4,4,4) & (2,2,2,2) & (3,3,3,3) \\
\cline{2-5}
\multirow{4}{*}{} & 1e-3 & (5,5,5,5) & (2,3,3,2) & (4,4,4,3) \\
\cline{2-5}
\multirow{4}{*}{} & 1e-4 & (5,6,6,5) & (3,3,3,3) & (4,5,4,4) \\
\cline{2-5}
\multirow{4}{*}{} & 1e-5 & (6,6,6,6) & (4,4,4,4) & (5,5,5,5) \\
\hline
%\multirow{4}{*}{$\mathcal{E}$} & 1e-2 & (4,5,5,5) & (3,3,3,3) & (4,4,4,4) \\
%\cline{2-5}
%\multirow{4}{*}{} & 1e-3 & (5,6,6,6) & (4,4,4,4) & (5,5,5,5) \\
%\cline{2-5}
%\multirow{4}{*}{} & 1e-4 & (7,7,7,7) & (5,5,5,5) & (5,6,6,6) \\
%\cline{2-5}
%\multirow{4}{*}{} & 1e-5 & (8,8,8,8) & (6,6,6,6) & (7,7,7,7) \\
%\hline
\end{tabular}
\caption{For different $\epsilon>0$, numerical results by applying TT-cross, TT-SVD and Algorithm \ref{RTT:alggreedy-tt} to estimate the $\epsilon$-TT-rank of the tensors from smooth functions.}	
 \label{RTT:tab2}
\end{table}

There exist two parameters $b$ and $q$ in Algorithm \ref{RTT:algadapt}. We now use three test tensors in Example \ref{RTT-exm3} to illustrate how the choice of $(b,q)$ affects the accuracy of Algorithm \ref{RTT:algadapt}. By setting $\epsilon=10^{-\sigma}$ and $\sigma={\rm linspace}(1,2,10)$, when we apply Algorithm \ref{RTT:algadapt} with different $(b,q)$ to these three tensors $\mathcal{C}$, $\mathcal{D}$ and $\mathcal{E}$, the values of RE and CPU time are shown in Figure \ref{RTT-fig6}. From this figure, we set $(b, q)=(10,0)$ in the rest.

\begin{figure}[htb]
\centering
\begin{tabular}{c}
\includegraphics[width=5in, height=1.8in]{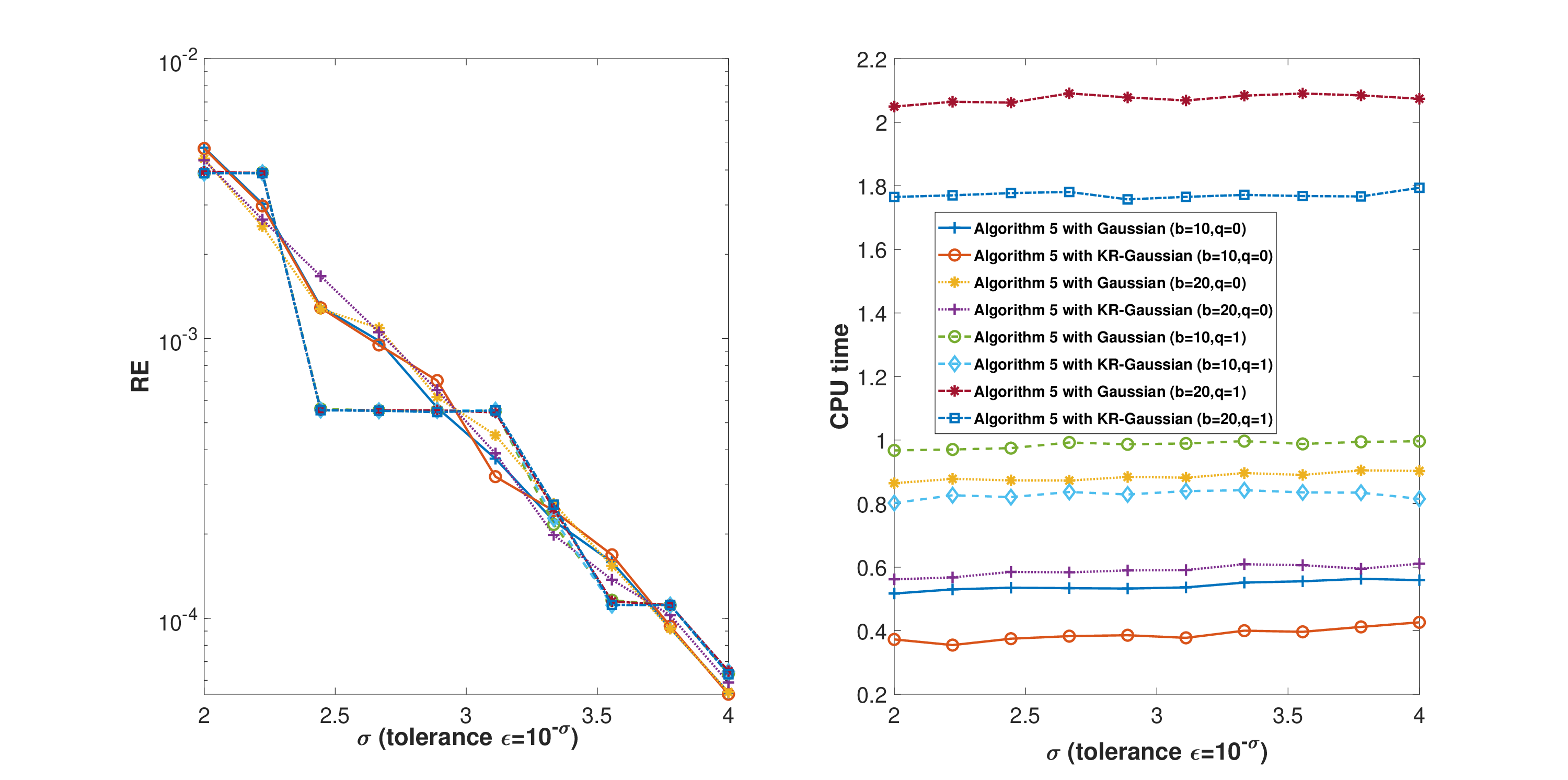}\\
\includegraphics[width=5in, height=1.8in]{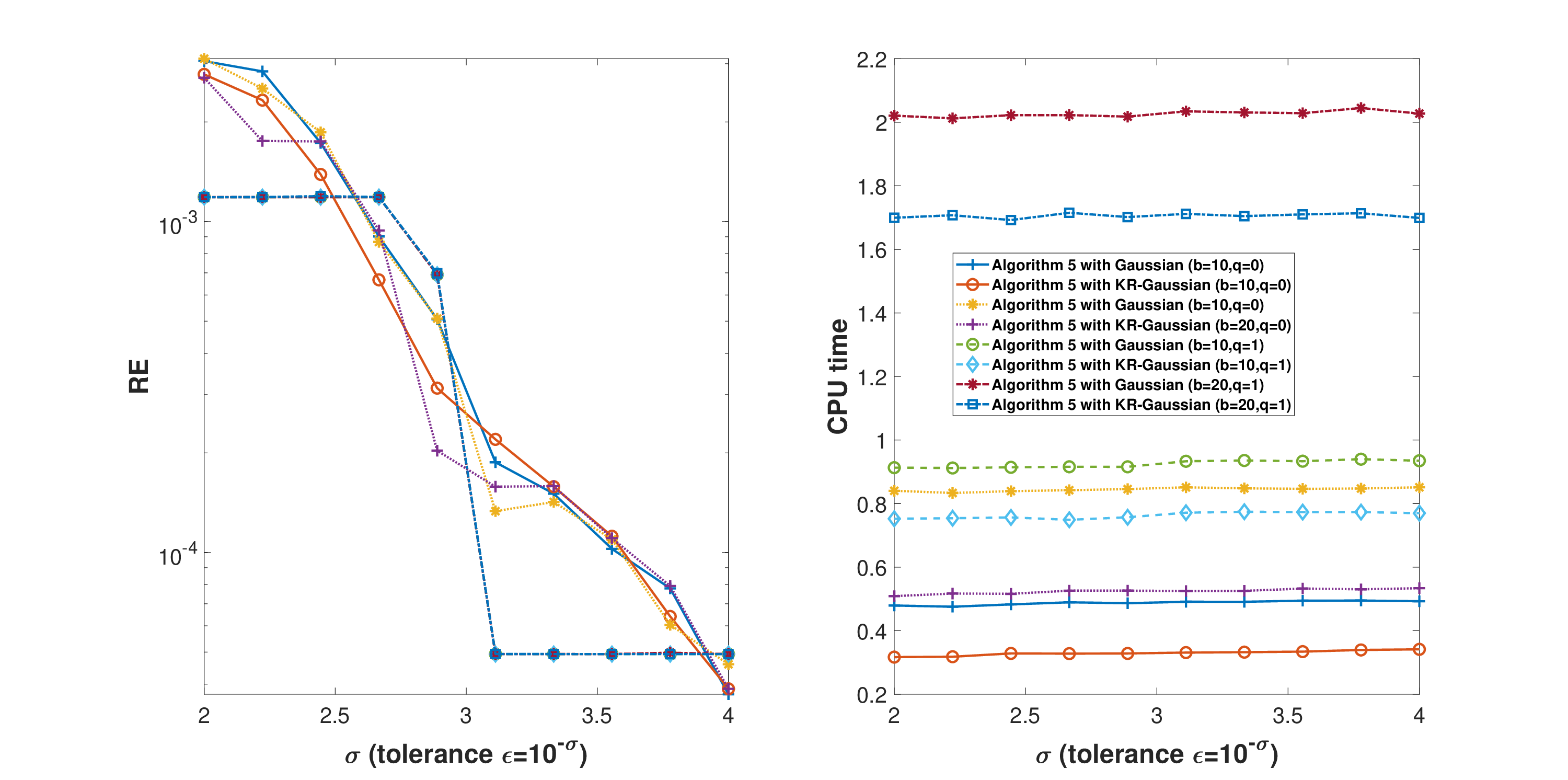}
\end{tabular}
\caption{For different $\epsilon>$, the results by applying Algorithm \ref{RTT:algadapt} with different $(b,q)$ to $\mathcal{C}$ (top row) and $\mathcal{D}$ (bottom row).}\label{RTT-fig6}
\end{figure}

For each $\epsilon$, we apply TT-SVD, Adap-rand-TT with Gaussian, Adap-rand-TT with KR-Gaussian, Algorithm \ref{RTT:algadapt} with Gaussian and Algorithm \ref{RTT:algadapt} with KR-Gaussian to the three tensors $\mathcal{A}_{\rm YaleB-5D}$, $\mathcal{A}_{{\rm DCmall-5D}}$, and $\mathcal{A}_{{\rm COIL-5D}}$, and the related results are illustrated in Table \ref{RTT:tab4}. It follows from this table that for each $\epsilon$, each term of the TT-rank obtained by Adap-rand-TT and Algorithm \ref{RTT:algadapt} is larger than that obtained by TT-SVD, and each term of the TT-rank obtained by Algorithm \ref{RTT:algadapt} is less than that obtained by Adap-rand-TT.

\begin{sidewaystable}
	\scriptsize
	\centering
	\resizebox{\textwidth}{!}{
	\begin{tabular}{|c|c|cc|cc|cc|cc|cc|cc|}
 \hline
		\multirow{2}{*}{Database}  &  \multirow{2}{*}{$\epsilon$} & \multicolumn{2}{c|}{TT-SVD} &  \multicolumn{2}{c|}{Adap-rand-TT with Gaussian} & \multicolumn{2}{c|}{Adap-rand-TT with KR-Gaussian} & \multicolumn{2}{c|}{Algorithm \ref{RTT:algadapt} with Gaussian} & \multicolumn{2}{c|}{Algorithm \ref{RTT:algadapt} with KR-Gaussian} \\
  \cline{3-12}
  & &  TT-rank & RE & TT-rank & RE & TT-rank & RE & TT-rank & RE & TT-rank & RE\\
  \hline
  \multirow{4}{*}{Yale B} & 0.5  & (2,10,30,14) & 4.6799e-01 & (5,29,91,32) & 4.1514e-01 & (6,34,112,34) & 3.9439e-01 & (3,22,74,26) & 4.8716e-01 & (3,21,73,24) & 4.8042e-01\\
  \cline{2-12}
  \multirow{4}{*}{} & 0.1 & (15,226,566,44) & 9.2185e-02 & (30,588,983,45) & 7.6231e-02 & (30,624,1067,45) & 7.0673e-02 & (26,504,950,45) & 8.4941e-02 & (26,499,949,45) & 8.6267e-02\\
  \cline{2-12}
  \multirow{4}{*}{} & 0.05 & (26,600,1003,45) & 4.3020e-02 & (40,1252,1466,45) & 3.5344e-02 & (40,1354,1557,45) & 3.0164e-02 & (34,1048,1430,45) & 4.3099e-02 & (36,1107,1434,45) & 4.2823e-02\\
  \cline{2-12}
  \multirow{4}{*}{} & 0.01 & (41,1623,1769,45) & 7.0699e-03 & (41,1712,1949,45) & 5.3246e-03 & (41,1706,1954,45) & 5.5759e-03 & (41,1697,1939,45) & 6.9719e-03 & (41,1696,1939,45) & 7.0583e-03\\
  \hline
  \multirow{4}{*}{Washington} & 0.5 & (3,18,6,1) & 4.6151e-01 & (14,130,109,11) & 4.0547e-01 & (12,143,204,12) & 3.9409e-01 & (8,52,21,2) & 4.7482e-01 & (7,53,28,2) & 4.7815e-01\\
  \cline{2-12}
  \multirow{4}{*}{} & 0.1 & (26,732,743,22) & 9.5051e-02 & (34,1129,756,29) & 7.5061e-02 & (35,1174,989,32) & 6.2834e-02 & (32,1040,938,26) & 9.1395e-02 & (31,1010,939,26) & 9.9541e-02\\
  \cline{2-12}
  \multirow{4}{*}{} & 0.05 & (33,1102,884,25) & 4.3190e-02 & (35,1261,1111,31) & 3.7452e-02 & (35,1282,1129,34) & 2.9173e-02 & (34,1217,1092,32) & 4.5669e-02 & (35,1251,1097,32) & 4.1550e-02\\
  \cline{2-12}
  \multirow{4}{*}{} & 0.01 & (35,1288,1211,33) & 8.4993e-03 & (35,1295,1426,39) & 4.4861e-03 & (35,1295,1430,38) & 5.4370e-03 & (35,1294,1413,38) & 7.7422e-03 & (35,1294,1414,39) & 6.2001e-03\\
  \hline
  \multirow{4}{*}{COIL-100} & 0.5 & (4,6,19,1) & 4.1068e-01 & (16,34,73,5) & 3.7285e-01 & (13,38,107,4) & 3.9145e-01 & (7,16,46,2) & 4.4508e-01 & (11,20,56,4) & 4.7798e-01\\
  \cline{2-12}
  \multirow{4}{*}{} & 0.1 & (27,349,903,21) & 9.8246e-02 & (45,852,1388,24) & 8.1669e-02 & (48,1000,1404,24) & 7.7937e-02 & (43,822,1346,23) & 9.6959e-02 & (47,876,1351,23) & 9.5534e-02\\
  \cline{2-12}
  \multirow{4}{*}{} & 0.05 & (46,1034,1442,24) & 4.3036e-02 & (60,2164,1653,24) & 3.8408e-02 & (62,2369,1662,24) & 3.4717e-02 & (57,2014,1638,24) & 4.2554e-02 & (58,2053,1638,24) & 4.2137e-02\\
  \cline{2-12}
  \multirow{4}{*}{} & 0.01 & (64,3303,1712,24) & 8.5419e-03 & (64,3905,1728,24) & 4.6805e-03 & (64,3914,1728,24) & 4.6090e-03 & (64,3883,1725,24) & 6.8851e-03 & (64,3886,1725,24) & 6.8672e-03\\
  \hline
\end{tabular}
 }
 \caption{For different $\epsilon>0$, numerical results by applying TT-SVD, Adap-rand-TT with Gaussian, Adap-rand-TT with KR-Gaussian, Algorithm \ref{RTT:algadapt} with Gaussian and Algorithm \ref{RTT:algadapt} with KR-Gaussian to the tensors from three real databases.}	
 \label{RTT:tab4}
\end{sidewaystable}

\section{Conclusions}
\label{randomizedTT:sec7}

For the fixed TT-rank and precision cases, we proposed several randomized algorithms for computing the approximation of the TT decomposition of a tensor. For the case of fixed TT-rank, we derived the computational complexities and error bounds for our Algorithms \ref{RTT:alg3} and \ref{RTT:alg2}. All figures in Section \ref{randomizedTT:sec6:5} illustrate that for a given TT-rank, Algorithms \ref{RTT:alg3} and \ref{RTT:alg2} with $q=1$ are productive for computing the approximation of the TT decomposition of the tensor. For a given tolerance, we derived a new algorithm to estimate the $\epsilon$-TT-rank of a tensor, and proposed an adaptive randomized algorithm for the approximation of the TT decomposition of the tensor. Furthermore, we also derived the accuracy of $\|\mathcal{A}-\mathcal{Q}_1\times_{2}^1\mathcal{Q}_2\times_{3}^1\dots\times_{3}^1\mathcal{Q}_N\|_F^2$ in floating point arithmetic. As shown in all the Tables in Section \ref{randomizedTT:sec6:6}, Algorithm \ref{RTT:algadapt} with Gaussian and KR-Gaussian is suitable for estimating the $\epsilon$-TT-rank and corresponding TT decomposition of any tensor with a given tolerance $0<\epsilon<1$.

For a given TT-rank, let $\{\mathcal{Q}_1,\mathcal{Q}_2,\dots,\mathcal{Q}_N\}$ be obtained from Algorithms \ref{RTT:alg3} and \ref{RTT:alg2}, Theorems \ref{RTT:thm13}, \ref{RTT-thm7} and \ref{RTT-thm4} state the upper bound for $\|\mathcal{A}-\mathcal{Q}_1\times_{2}^1\mathcal{Q}_2\times_{3}^1\dots\times_{3}^1\mathcal{Q}_N\|_F$. Note that we restrict $q=0$ for Theorems \ref{RTT:thm13} and \ref{RTT-thm7} and $q=1$ for Theorem \ref{RTT-thm4}. Hence, one open issue is to consider the upper bound for $\|\mathcal{A}-\mathcal{Q}_1\times_{2}^1\mathcal{Q}_2\times_{3}^1\dots\times_{3}^1\mathcal{Q}_N\|_F$ with any integer $q\geq 1$, which will be considered in the future.

\section*{Acknowledgments}
This work is supported by the Hong Kong Innovation and Technology Commission (InnoHK Project CIMDA), the Hong Kong Research Grants Council (Project 11204821),  City University of Hong Kong (Projects 9610034 and 9610460) and the National Natural Science Foundation of China under the grant 12271108.
{\small
\bibliographystyle{siam}
\bibliography{paper}
}
\end{document}